\newtheorem{thm}{Theorem}[section]
\newtheorem{cor}[thm]{Corollary}
\newtheorem{Lemma}[thm]{Lemma}
\numberwithin{equation}{section}
\newtheorem{remark}[thm]{Remark}
\newcommand{\Bx}{\mathbf{x}}
\newcommand{\CR}{\mathcal{R}}
\newcommand{\RR}{\mathbb{R}}
\author{}
\date{Nov 2020}
\title[Vanishing near corners of conductive transmission eigenfunctions]{On vanishing near corners of conductive transmission eigenfunctions}
\author{Youjun Deng}
\address{School of Mathematics and Statistics, Central South University, Changsha, Hunan, China}
\email{youjundeng@csu.edu.cn; dengyijun\_001@163.com}
\author{Chaohua Duan}
\address{School of Mathematics and Statistics, Central South University, Changsha, Hunan, China}
\email{chaohua\underline{ }duan@163.com}
\author{Hongyu Liu}
\address{Department of Mathematics, City University of Hong Kong, Kowloon, Hong Kong, China.}
\email{hongyu.liuip@gmail.com, hongyliu@cityu.edu.hk}
\begin{document}
\maketitle

\begin{abstract}

In this paper, we consider the transmission eigenvalue problem associated with a general conductive transmission condition and study the geometric structures of the transmission eigenfunctions. We prove that under a mild regularity condition in terms of the Herglotz approximations of one of the pair of the transmission eigenfunctions, the eigenfunctions must be vanishing around a corner on the boundary. The Herglotz approximation can be regarded as the Fourier transform of the transmission eigenfunction in terms of the plane waves, and the growth rate of the transformed function can be used to characterize the regularity of the underlying wave function. The geometric structures derived in this paper include the related results in \cite{ref1,ref3} as special cases and verify that the vanishing around corners is a generic local geometric property of the transmission eigenfunctions.

    \medskip

\noindent{\bf Keywords:}~~ conductive transmission eigenfunctions; corner singularity; geometric structures; vanishing; Herglotz approximation.

\noindent{\bf 2010 Mathematics Subject Classification:}~~58J05, 35P25 (primary); 35Q60, 78A05 (secondary).


    \end{abstract}

\section{Introduction}

\subsection{Background}
In its general form, the transmission eigenvalue problem is given as follows (cf. \cite{Liu}):
\begin{equation}\label{eq:trans1}
\mathcal{P}_1(\mathbf{x}, D)u=-\lambda u, \quad \mathcal{P}_2(x, D)v=-\lambda v\ \ \mbox{in}\ \ \Omega; \quad \mathcal{C}(u)=\mathcal{C}(v)\ \ \mbox{on}\ \ \partial\Omega,
\end{equation}
where $\Omega$ is a bounded Lipschitz domain in $\mathbb{R}^n$, $n=2,3$, with a connected complement $\mathbb{R}^n\backslash\overline{\Omega}$ and $P_j(\mathbf{x}, D)$ are two elliptic partial differential operators (PDOs) with $D$ signifying the differentiations with respect to $\mathbf{x}=(x_j)_{j=1}^n\in\mathbb{R}^n$, and $\mathcal{C}$ denotes the Cauchy data set. If there exists a nontrivial pair of solutions $(u, v)$, then $\lambda\in\mathbb{C}$ is called a transmission eigenvalue and $(u, v)$ are the corresponding pair of transmission eigenfunctions.

Though the PDOs $\mathcal{P}_j$, $j=1,2$, are generally elliptic, selfadjoint and linear,  the transmission eigenvalue problems of the form \eqref{eq:trans1} are a type of non-elliptic, non-selfadjoint and nonlinear (in terms of the transmission eigenvalue $\lambda$) spectral problems, making the corresponding spectral study highly intriguing and challenging; see \cite{Liu} for some related discussion. The transmission eigenvalue problems arise in the wave scattering theory and connect to many aspects of the wave scattering theory in a delicate way. Indeed, many of the spectral results established for the transmission eigenvalue problems in the literature have found important applications in the wave scattering theory, including generating novel wave imaging and sensing schemes, producing important implications to invisibility cloaking and proving new uniqueness results for inverse scattering problems. We refer to \cite{CCH,CH,CKreview,Liu} for historical accounts and surveys on the state-of-the-art developments of the spectral studies for the transmission eigenvalue problems in the literature.

To a great extent, the spectral properties of the (real) transmission eigenvalues resemble those for the classical Dirichlet/Neumann Laplacian: there are infinitely many real transmission eigenvalues which are discrete and accumulate only at infinity. Nevertheless, due to the non-selfadjointness, there are complex transmission eigenvalues; see \cite{CCH,CKreview} the references cited therein. Recently, several local and global geometric structures of distinct features were discovered for the transmission eigenfunctions \cite{ref2,BLin,BLLW,ref3,BL2,BL3,BL4,BXL,CX,CDL,CDHLW1,ref1} and all of them have produced interesting applications of practical importance in the scattering theory. In this paper, we are concerned with the vanishing property of the transmission eigenfunctions around a corner on the boundary of the domain, which was first discovered in \cite{ref3} and further investigated in \cite{ref1}. Before discussing our major discoveries, we next specify the transmission eigenvalue problem as well as its vanishing properties in our study.

Let $\Omega$ be a bounded Lipschitz domain in $\mathbb{R}^{n}, n=2,3,$, with a connected complement $\mathbb{R}^n\backslash\overline{\Omega}$, and $V \in L^{\infty}(\Omega)$ and $\eta \in L^{\infty}(\partial \Omega)$ be possibly complex-valued functions. Consider the following transmission eigenvalue problem for $v, w \in H^{1}(\Omega)$ and $\lambda=k^2$, $k\in\mathbb{R}_+$:
\begin{equation}\label{1.1}
\begin{cases}
\big(\Delta+k^2(1+V)\big) w=0\ &\ \mbox{in}\ \ \Omega,\medskip\\
(\Delta+k^2) v=0\ &\ \mbox{in}\ \ \Omega,\medskip\\
w=v,\ \ \partial_\nu w=\partial_\nu v+\eta v\ &\ \mbox{on}\ \partial\Omega,
\end{cases}
\end{equation}
where $\nu\in\mathbb{S}^{n-1}$ signifies the exterior unit normal to $\partial\Omega$. Two remarks concerning the formulation of the transmission eigenvalue problem \eqref{1.1} are in order. First, we introduce $k^2$ to denote the transmission eigenvalue. On the one hand, $k$ signifies a wavenumber in the physical setup and on the other hand, this notation shall ease the exposition of our subsequent mathematical arguments. Though only $k\in\mathbb{R}_+$ is physically meaningful, some of our subsequent results also hold for the case that $k$ is a complex number, which should be clear from the context. Second, the second transmission condition on $\partial\Omega$ in \eqref{1.1} is known as the conductive transmission condition. This type of transmission condition arises in modelling wave interaction with a certain material object and can find important applications in magnetotellurics; see e.g. \cite{CDL,ref1} and the references cited therein for more relevant physical backgrounds. On the other hand, if one simply takes $\eta\equiv 0$, \eqref{1.1} is reduced to the transmission eigenvalue problem that has been more intensively studied in the literature. In order to signify such a generalization and extension, we refer to the eigenvalue problem \eqref{1.1} as the \emph{conductive transmission eigenvalue problem}, which includes the conventional transmission eigenvalue problem as a special case.

Let $\mathbf{x}_c\in\partial\Omega$ be a corner point, which shall be made more precise in what follows. Let $B_\rho(\mathbf{x}_c)$ denote a ball of radius $\rho\in\mathbb{R}_+$ centred at $\mathbf{x}_c$. The vanishing property of the transmission eigenfunction is described as follows:
\begin{equation}\label{eq:v1}
    \lim _{\rho \rightarrow+0} \frac{1}{{m}\left(B\left(\Bx_{c}, \rho\right)\cap \Omega\right)} \int_{B\left(\Bx_{c}, \rho\right)\cap\Omega}|\psi(\Bx)|\, \mathrm{d} \Bx=0,\ \ \psi=w\ \ \mbox{or}\ \ v,
\end{equation}
where ${m}$ denotes the Lebesgue measure. It is noted that $w$ and $v$ are $H^1$-functions and the vanishing at a boundary point should be understood in the integral sense. On the other hand, if $\psi$ is a continuous function in a neighbourhood of $\mathbf{x}_c$, \eqref{eq:v1} clearly implies that $\psi(\mathbf{x}_c)=0$. In fact, the regularity of the transmission eigenfunctions $w$ and $v$ in \eqref{1.1} is critical for the establishment of the vanishing property \eqref{eq:v1}. Under the regularity condition that both $w$ and $v$ are additionally  H\"older continuous, namely $C^\alpha$ continuous with $\alpha\in (0, 1)$, it is shown in \cite{ref3} and \cite{ref1} that the vanishing property holds respectively in the cases with $\eta\equiv 0$ and $\eta\neq 0$. By the classical result on the quantitative behaviours of the solutions to elliptic PDEs around a corner (cf. \cite{Cos,Dauge88,Grisvard}), we have the following decompositions
\begin{equation}\label{eq:decom1}
w=w_{\mathrm{singular}}+w_{\mathrm{regular}},\quad v=v_{\mathrm{singular}}+v_{\mathrm{regular}},
\end{equation}
where the regular parts belong to $H^2$ and hence by the standard Sobolev embedding, they are H\"older continuous. The singular parts may also be H\"older continuous provided the coefficients, namely $V$, as well as the boundary data of $w$ and $v$ around the corner are sufficiently regular. However, in the transmission eigenvalue problem \eqref{1.1}, the boundary data, namely $(w|_{\partial\Omega}, \partial_\nu w|_{\partial\Omega})$ and $(v|_{\partial\Omega}, \partial_\nu v|_{\partial\Omega})$ are not specified. Hence, it may happen that the transmission eigenfunctions are $H^1$ but not H\"older continuous. Clearly, according to our discussion above, the vanishing property may serve as an indicator for such singular behaviours of the transmission eigenfunctions around the corner. Indeed, according to the extensive numerical examples in \cite{BLLW}, though the transmission eigenfunctions generically vanish around a corner, there are cases that the transmission eigenfunctions are not vanishing and instead they are localizing around a corner, especially when the corner is concave. Hence, it is mathematically intriguing and physically significant to thoroughly understand such a singularity formation of the transmission eigenfunctions and its connection to the corresponding vanishing behaviour. In \cite{ref1,ref3}, a regularity criterion of a different mathematical feature, but more physically related, has been investigated in connection to the vanishing property of the transmission eigenfunction. It is given in terms of the Hergoltz approximation of the transmission eigenfunction $v$ in \eqref{1.1}. The Herglotz approximation in a certain sense is the Fourier transform (in terms of the plane waves) of the eigenfunction $v$ who satisfies the homogeneous Helmholtz equation. Hence, the growth rate of the transformed function, i.e. the density function in the Herglotz wave, can naturally be used to characterize the regularity of the underlying wave function. This resembles the classical way of defining the Sobolev space via the Bessel potentials. In this paper, we shall explore along this direction and derive much sharper estimates to show that the vanishing property of the transmission eigenfunctions holds for a much broader class of functions in terms of the Herglotz approximation. The vanishing property of the transmission eigenfunctions derived in this paper include the corresponding results in \cite{ref1,ref3} as special cases.

\subsection{Statement of the main results and discussions}

In order to present a complete and comprehensive study, the statements of our main results are lengthy and technically involved. Nevertheless, in order to give the readers a global picture of our study, we briefly summarize the major findings in the following two theorems. To that end, we first introduce the Herglotz approximation.

For $g_{j} \in L^{2}\left(\mathbb{S}^{n-1}\right)$, we set
\begin{equation}
    v_{j}(\Bx)=\int_{\mathbb{S}^{n-1}} e^{i k \xi \cdot \Bx} g_{j}(\xi) \mathrm{d} \sigma(\xi), \quad \xi \in \mathbb{S}^{n-1}, \Bx \in \mathbb{R}^{n}.\label{2.3}
\end{equation}
$v_j$ is known as a Herglotz wave with kernel $g_j$. It is easy to see from (\ref{2.3}) that $v_j$ is formed by the superposition of plane waves and it is an entire solution to the Helmholtz equation $\Delta v_{j}+k^{2} v_{j}=0 $. Hence, $g_j$ can be regarded as the Fourier density of the wave function $v_j$ in terms of the plane waves. We have the following denseness property of the Hergoltz waves.
\begin{Lemma}[\cite{Wec}]\label{lem:Herg}
Let $\Omega \Subset \mathbb R^n$ be a bounded Lipschitz domain with a connected complement and  ${\mathscr H}_k$ be the space of all the Herglotz wave functions of the form \eqref{2.3}. Define
$$
{\mathscr S}_k(\Omega ) =  \{u\in C^\infty (\Omega);\ \Delta u+k^2u=0\}
$$
and
$$
{\mathscr H}_k(\Omega ) =  \{u|_\Omega;\  u\in {\mathscr H}_k\}.
$$
Then  ${\mathscr H}_k(\Omega )$ is dense in ${\mathscr S}_k(\Omega )  \cap  L^2 ( \Omega )$ with respect to the topology induced by the $H^1(\Omega)$-norm.
\end{Lemma}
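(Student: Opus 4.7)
\medskip

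\noindent\textbf{Proof proposal.} The statement is a density theorem in $H^{1}(\Omega)$, so the natural route is a Hahn--Banach / duality argument combined with Rellich's lemma applied to an auxiliary exterior Helmholtz solution. I would argue by contradiction: assume ${\mathscr H}_k(\Omega)$ is not dense in ${\mathscr S}_k(\Omega)\cap L^2(\Omega)$ with respect to the $H^1$-norm. Then by Hahn--Banach there exists a bounded linear functional $\ell\in H^1(\Omega)^*$ such that $\ell$ vanishes on ${\mathscr H}_k(\Omega)$ but $\ell(u_0)\neq 0$ for some $u_0\in {\mathscr S}_k(\Omega)\cap L^2(\Omega)$. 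The goal is to derive a contradiction by showing $\ell\equiv 0$ on ${\mathscr S}_k(\Omega)\cap L^2(\Omega)$.

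The first concrete step is to represent $\ell$. By the Riesz representation in $H^1(\Omega)$, there exist $f_0\in L^2(\Omega)$ and $\mathbf{f}\in L^2(\Omega;\mathbb{C}^n)$ with
$$
\ell(\varphi)=\int_{\Omega}\bigl(f_0\,\overline{\varphi}+\mathbf{f}\cdot\overline{\nabla\varphi}\bigr)\,\mathrm{d}\Bx,\qquad \varphi\in H^1(\Omega).
$$
Extending $f_0$ and $\mathbf{f}$ by zero to $\mathbb{R}^n$, the action of $\ell$ on smooth functions is given by pairing with the compactly supported distribution $T:=\bar{f}_0-\nabla\!\cdot\!\bar{\mathbf{f}}\in H^{-1}(\mathbb{R}^n)$ with $\operatorname{supp}T\subset\overline{\Omega}$. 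The assumption $\ell(e^{ik\xi\cdot\Bx})=0$ for every $\xi\in\mathbb{S}^{n-1}$, combined with density (integrating against $g\in L^2(\mathbb{S}^{n-1})$) and the hypothesis that $\ell$ annihilates every Herglotz wave, translates into
$$
\widehat{T}(k\xi)=0\quad\text{for all}\quad \xi\in\mathbb{S}^{n-1},
$$
i.e.\ the Fourier transform of $T$ vanishes identically on the sphere of radius $k$.

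The second step converts this spectral information into an exterior statement. Let $\Phi_k$ denote the outgoing fundamental solution of $\Delta+k^2$ in $\mathbb{R}^n$ and define
$$
U(\Bx):=\langle T_{\By},\Phi_k(\Bx-\By)\rangle,\qquad \Bx\in\mathbb{R}^n\setminus\overline{\Omega}.
$$
Then $U$ satisfies $(\Delta+k^2)U=0$ in $\mathbb{R}^n\setminus\overline{\Omega}$ and the Sommerfeld radiation condition, and its far-field pattern $U_\infty(\hat\Bx)$ is (up to a fixed constant) precisely $\widehat{T}(k\hat\Bx)$, which we just showed is zero for all $\hat\Bx\in\mathbb{S}^{n-1}$. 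By Rellich's lemma together with the unique continuation principle and the connectedness of $\mathbb{R}^n\setminus\overline{\Omega}$, we conclude $U\equiv 0$ in $\mathbb{R}^n\setminus\overline{\Omega}$.

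Finally, for any $u\in{\mathscr S}_k(\Omega)\cap L^2(\Omega)$ I would use a Green-type identity: approximating $u$ by smooth Helmholtz solutions on slightly smaller Lipschitz subdomains and integrating by parts twice, the pairing $\langle T,u\rangle$ can be reexpressed as a boundary integral on $\partial\Omega$ involving the traces of $u$, $\partial_\nu u$, $U$ and $\partial_\nu U$ (from the exterior side), and since $U=\partial_\nu U=0$ on $\partial\Omega$ this integral vanishes. Hence $\ell(u)=0$, contradicting $\ell(u_0)\neq 0$. The main technical obstacle is the last step: because $\Omega$ is only Lipschitz and $T\in H^{-1}$, the Green's identity has to be justified carefully, e.g.\ by regularization of $u$ via interior translations together with $H^{1/2}/H^{-1/2}$ trace dualities on $\partial\Omega$, and by controlling $U$ near $\partial\Omega$ via its representation as a layer potential with density in the appropriate negative Sobolev space. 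Once this boundary-integral identity is rigorously established, the argument closes.
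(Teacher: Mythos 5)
First, a remark on the comparison you asked for: the paper does not prove this lemma at all --- it is quoted from Weck \cite{Wec} --- so your proposal can only be measured against the argument in that reference and the standard literature. Your architecture (Hahn--Banach, Riesz representation of the annihilating functional by a pair $f_0\in L^2(\Omega)$, $\mathbf{f}\in L^2(\Omega;\mathbb{C}^n)$, the observation that killing every Herglotz wave forces the Fourier transform of the compactly supported distribution $T$ to vanish on the sphere of radius $k$, and then Rellich's lemma plus unique continuation in the connected complement to conclude $U=\Phi_k*T\equiv 0$ outside $\overline{\Omega}$) is precisely the classical duality proof of such density statements and is in the same spirit as Weck's argument. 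Those steps are sound.

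The final step, however, contains a genuine error in the mechanism, not just a technicality to be polished. Because $T$ contains the term $\nabla\cdot(\bar{\mathbf{f}}\,1_{\Omega})$, it carries a single-layer charge $(\nu\cdot\bar{\mathbf{f}})\,\mathrm{d}\sigma_{\partial\Omega}$ on the boundary; consequently $U$ is only $H^1_{\mathrm{loc}}$ and its normal derivative \emph{jumps} across $\partial\Omega$. The exterior Cauchy data of $U$ do vanish, but the Green identity you invoke is necessarily performed inside $\Omega$ and therefore produces the \emph{interior} trace $\partial_\nu U|_-$, which equals $\nu\cdot\bar{\mathbf{f}}$ rather than $0$ in general; only $U|_{\partial\Omega}=0$ survives from the exterior vanishing. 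The conclusion $\ell(u)=0$ is still true, but only because this nonzero term is exactly cancelled by the boundary term generated when $\int_\Omega \mathbf{f}\cdot\overline{\nabla u}$ is itself integrated by parts --- a cancellation your write-up does not identify, and one that is awkward to justify on a Lipschitz boundary since $\nu\cdot\mathbf{f}$ has no trace for a general $L^2$ vector field. A cleaner repair avoids $\partial\Omega$ altogether: first approximate $u$ in $H^1(\Omega)$ by Helmholtz solutions defined on a neighbourhood of $\overline{\Omega}$ (a Runge-type step, itself proved by duality and unique continuation); for such $u$, with a cutoff $\chi\in C_c^\infty$ equal to $1$ near $\overline{\Omega}$, one has $\ell(u)=\pm\langle U,(\Delta+k^2)(\chi u)\rangle$, which vanishes because $(\Delta+k^2)(\chi u)=0$ wherever $U$ can be nonzero and $U\equiv 0$ wherever it is not. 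With that modification (or with a careful bookkeeping of the jump relations) your proof closes.
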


\begin{thm}\label{thm:main1}
Consider the transmission eigenvalue problem \eqref{1.1} with $\eta \not \equiv0$. Let $\mathbf{x}_c\in\partial\Omega$ be a corner point in two and three dimensions and $\mathcal{N}_h$ be a neighbourhood of $\mathbf{x}_c$ within $\Omega$ with $h\in\mathbb{R}_+$ sufficiently small. Suppose that $(1+V) w$ and $\eta$ are both H\"older continuous on $\overline{\mathcal{N}_h}$ and $\partial\mathcal{N}_h\cap\partial\Omega$ respectively and $\eta(\mathbf{x}_c)\neq 0$. If there exist constants $C,\varrho$ and $\Upsilon$ with $C>0, \Upsilon>0 \text { and } \varrho<\Upsilon$ such that the transmission eigenfunction $v$ can be approximated in $H^{1}\left(\mathcal{N}_{h}\right)$ by the Herglotz functions $v_{j}, j=1,2, \ldots,$ with kernels $g_{j}$ satisfying
\begin{equation}
    \left\|v-v_{j}\right\|_{H^{1}\left(\mathcal{N}_{h}\right)} \leq j^{-\Upsilon}, \quad\left\|g_{j}\right\|_{L^{2}\left(\mathbb{S}^{n-1}\right)} \leq C j^{\varrho}, \label{eq:cond1}
\end{equation}
then $w$ and $v$ vanish near $\mathbf{x}_c$ in the sense of \eqref{eq:v1}.

More detailed results are respectively given in Theorems~\ref{Theorem 2.1} and \ref{Theorem 3.1} for the two and three dimensions.
\end{thm}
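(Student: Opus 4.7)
The plan is to use the complex geometric optics (CGO) / probing-solution method developed in \cite{ref1,ref3}, combined with the quantitative Herglotz approximation, to extract the pointwise value of $v$ at the corner. After translating so that $\mathbf{x}_c=0$, set $S_h=B_h(0)\cap\Omega$, with $\Sigma_h=\partial S_h\cap\partial\Omega$ denoting the two (in 2D) or conical (in 3D) boundary pieces meeting at the apex and $\Gamma_h=\partial S_h\cap\Omega$ the ``artificial'' spherical arc. For a large parameter $\tau>0$ and a direction $d\in\mathbb{S}^{n-1}$ adapted to the corner opening (pointing \emph{away} from $\Omega$), I would construct a CGO function $u_0(\cdot;\tau)$ satisfying $(\Delta+k^2)u_0=0$ and
\begin{equation}
|u_0(\mathbf{x})|\le e^{-c\tau|\mathbf{x}|}\quad\text{on }\overline{S_h},
\end{equation}
for some $c>0$, with known precise asymptotics for $\int_{\Sigma_h}u_0\,\mathrm{d}\sigma$ and $\|u_0\|_{L^2(\Sigma_h)}$ as $\tau\to\infty$. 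In 2D this is the standard exponential CGO with phase $\rho=\tau(d+id^\perp)$ adjusted by an $O(1/\tau)$ correction; in 3D an extra transverse oscillatory factor is inserted.

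Next I would derive the governing integral identity. Subtracting the two PDEs in \eqref{1.1} gives $(\Delta+k^2)(w-v)=-k^2 V w$ in $\Omega$, and on $\Sigma_h$ the conductive transmission condition gives $w-v=0$ and $\partial_\nu(w-v)=\eta v$. Applying Green's second identity to $u_0$ and $w-v$ on $S_h$ yields
\begin{equation}\label{eq:plan-id}
\int_{\Sigma_h}\eta\,v\,u_0\,\mathrm{d}\sigma=-k^2\int_{S_h}V w\,u_0\,\mathrm{d}\mathbf{x}-\int_{\Gamma_h}\bigl[u_0\partial_\nu(w-v)-(w-v)\partial_\nu u_0\bigr]\mathrm{d}\sigma.
\end{equation}
I would then replace $v$ by $v_j+(v-v_j)$ in the left-hand side and Taylor-expand the Herglotz wave and $\eta$ at the corner. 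Using the pointwise bound $\|D^\alpha v_j\|_{L^\infty}\le C k^{|\alpha|}\|g_j\|_{L^2(\mathbb{S}^{n-1})}\le C' k^{|\alpha|}j^\varrho$ from \eqref{2.3} and the H\"older continuity $|\eta(\mathbf{x})-\eta(0)|\lesssim|\mathbf{x}|^\alpha$, the leading term becomes $\eta(0)v_j(0)\int_{\Sigma_h}u_0\,\mathrm{d}\sigma$, of exact order $\tau^{-s}$ for an explicit $s>0$; the remaining Taylor remainders decay like $\tau^{-s-\alpha}$ with a multiplicative factor at most $j^\varrho$. The approximation-error term is controlled by the trace inequality:
\begin{equation}
\Bigl|\int_{\Sigma_h}\eta(v-v_j)u_0\,\mathrm{d}\sigma\Bigr|\le\|\eta\|_{L^\infty}\|u_0\|_{L^2(\Sigma_h)}\|v-v_j\|_{L^2(\Sigma_h)}\lesssim \tau^{-1/2}\,j^{-\Upsilon}.
\end{equation}
For the right-hand side of \eqref{eq:plan-id}, I would write $Vw=(1+V)w-w$, expand the H\"older-continuous function $(1+V)w$ at the corner, and use the exponential decay of $u_0$ in $S_h$ to obtain a bound of order $\tau^{-s-\alpha}$; the contribution of $w$ inside $S_h$ is handled analogously, ultimately reducing to bounds involving $v$ on $\Sigma_h$ through the transmission condition. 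The $\Gamma_h$ integral is exponentially small in $\tau h$ since $|\mathbf{x}|\ge c h$ there.

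Collecting all estimates and dividing \eqref{eq:plan-id} by the leading asymptotic of $\int_{\Sigma_h}u_0\,\mathrm{d}\sigma$, I would arrive at
\begin{equation}
|\eta(0)\,v_j(0)|\le C\bigl(\tau^{-\alpha}j^\varrho+\tau^{s-1/2}j^{-\Upsilon}+e^{-c\tau h}\bigr).
\end{equation}
The crucial point is that the exponent condition $\varrho<\Upsilon$ allows a choice $\tau=\tau(j)\to\infty$ (of the form $\tau\sim j^\sigma$ with $\sigma$ in a suitable window) for which \emph{every} term on the right tends to zero; this is exactly where the hypothesis $\varrho<\Upsilon$ is used in a quantitative fashion. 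Passing to the limit $j\to\infty$ and using $v_j\to v$ in $H^1(\mathcal N_h)$ together with $\eta(0)\ne0$ yields the averaged vanishing \eqref{eq:v1} for $v$. Finally, the vanishing of $w$ near $\mathbf{x}_c$ follows from $w=v$ on $\Sigma_h$ combined with the H\"older continuity of $(1+V)w$ and the Helmholtz-type equation $\Delta w=-k^2(1+V)w$: known boundary data and a regular volume source propagate the averaged vanishing of boundary values into averaged vanishing of $w$ on $S_h$.

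The main obstacle I anticipate is the simultaneous balancing of three competing scales: the exponential concentration of $u_0$ at the corner (which wants $\tau$ large), the polynomial blow-up of $\|v_j\|_{C^k}$ through $\|g_j\|_{L^2}\le C j^\varrho$ (which wants $j$ not too large for a given $\tau$), and the $H^1$ approximation error $j^{-\Upsilon}$ (which wants $j$ large). Getting an explicit admissible window $\tau=\tau(j)$, together with a clean asymptotic for $\int_{\Sigma_h}u_0\,\mathrm{d}\sigma$ uniformly in the corner geometry, is the delicate step; in 3D this is compounded by the need to adapt the CGO to a conical rather than planar corner, which is where the separate statements of Theorems~\ref{Theorem 2.1} and \ref{Theorem 3.1} become necessary.
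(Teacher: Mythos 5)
Your overall strategy (probe the transmission conditions with a corner-concentrating CGO solution via Green's identity, extract $\eta(0)v_j(0)\int_{\Sigma_h}u_0$ as the leading term, control the Herglotz error by the trace theorem, and balance $\tau=\tau(j)$ using $\varrho<\Upsilon$) matches the paper's, and your identification of where $\varrho<\Upsilon$ enters and of the choice $\tau\sim j^{\sigma}$ is correct: the paper takes $s=j^{\beta}$ with $\max\{\varrho,0\}<\beta<\Upsilon$. However, there is a genuine gap in your treatment of the volume term. Because you insist on a CGO satisfying $(\Delta+k^{2})u_{0}=0$, Green's identity applied to $w-v$ forces the volume density to be $-k^{2}Vw\,u_{0}$. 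Under the hypotheses only $(1+V)w$ is H\"older continuous; $Vw=(1+V)w-w$ contains the bare eigenfunction $w$, which is merely $H^{1}$, so it cannot be Taylor-expanded at the corner. The only generic bound available, $\bigl|\int_{S_h}w\,u_{0}\bigr|\le\|w\|_{L^{2}(S_h)}\|u_{0}\|_{L^{2}(S_h)}$, is of the \emph{same} order in the large parameter as the leading boundary term (both scale like the first negative power), so it does not vanish after normalization; and your proposed workaround of "reducing to bounds involving $v$ on $\Sigma_h$ through the transmission condition" requires another integration by parts that reintroduces $\int_{\Sigma_h}u_{0}\,\partial_{\nu}v\,\mathrm{d}\sigma$, which is only controlled by $\|u_{0}\|_{H^{1}(S_h)}=O(1)$ and hence swamps the main term. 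The paper avoids this entirely by choosing the \emph{harmonic} CGO $u_{0}=\exp\bigl(\sqrt{r}\,e^{\mathbf{i}(\theta/2+\pi)}\bigr)$ of Lemma~\ref{Lemma 2.1} (so $\Delta u_{0}=0$, not $(\Delta+k^{2})u_{0}=0$): then the volume density becomes $k^{2}(v-qw)u_{0}$, in which $qw=(1+V)w$ is H\"older by assumption and $v$ is replaced by the smooth Herglotz function $v_{j}$ at the cost of an $L^{2}$ error $\|v-v_{j}\|$, which is exactly where the hypothesis \eqref{eq:cond1} is consumed. This choice of probing function is not cosmetic; your version of the argument does not close without it (or without the stronger, unassumed hypothesis that $Vw$ itself is H\"older).

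A secondary discrepancy: in three dimensions the corner treated by the paper is an \emph{edge} $W\times(-M,M)$, and Theorem~\ref{Theorem 3.1} handles it by applying the dimension-reduction operator $\mathcal{R}$ of Definition~\ref{Definition 3.1} (integration against a cutoff in $x_{3}$) so as to reuse the two-dimensional CGO on the cross-sectional sector, together with the positivity of the constants $C_{2}^{\pm}$ to keep the limiting coefficient nonzero. Your plan of inserting "an extra transverse oscillatory factor" into the CGO to treat a conical corner is a different and underspecified construction for which the required sharp asymptotics of $\int_{\Sigma_h}u_{0}$ are not available in this framework; you would need to either adopt the dimension-reduction route or supply those estimates.
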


\begin{remark}\label{rem:a0}
As discussed earlier, the vanishing properties were investigated in \cite{ref1} under a similar setup to Theorem~\ref{thm:main1}. Compared to the corresponding results in \cite{ref1}, Theorem~\ref{thm:main1} has two significant improvements in the regularity requirements. First, the Herglotz approximation condition in \cite{ref1} was required to be
    \begin{equation}
        \left\|v-v_{j}\right\|_{H^{1}\left(\mathcal{N}_{h}\right)} \leq j^{-1-\Upsilon}, \quad\left\|g_{j}\right\|_{L^{2}\left(\mathbb{S}^{n-1}\right)} \leq C j^{\rho},\label{2.52}
    \end{equation}
    where the constants $C>0, \Upsilon>0 \text { and } 0<\varrho<1$. It is directly verified that the regularity condition (\ref{2.52}) is included in \eqref{eq:cond1} as a special case. Second, it was required in \cite{ref1} that $w-v$ is $H^2$-regular away from the corner point $\mathbf{x}_c$, and we remove this rather artificial regularity requirement in Theorem~\ref{thm:main1}.

\end{remark}

\begin{thm}\label{thm:main2}
Consider the transmission eigenvalue problem \eqref{1.1} with $\eta\equiv 0$. Under the same conditions as in Theorem \ref{thm:main1}, one has
    \begin{equation}
        \lim _{\rho \rightarrow+0} \frac{1}{m\left(B\left(\Bx_{c}, \rho\right)\cap\Omega\right)} \int_{B\left(\Bx_{c}, \rho\right)\cap\Omega} V(\Bx)w(\Bx) \mathrm{d} \Bx=0.\label{newre}
    \end{equation}
The similar result holds in the three-dimensional case with \eqref{eq:cond1} replaced to be \eqref{3.3}.

More detailed results are respectively given in Corollaries~\ref{maincor1} and \ref{maincor2} for the two and three dimensions.
\end{thm}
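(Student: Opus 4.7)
The strategy is to adapt the CGO-based argument underlying Theorem~\ref{thm:main1} to the case $\eta\equiv 0$. Setting $u=w-v$, the system \eqref{1.1} with $\eta\equiv 0$ reduces in a corner neighbourhood $\mathcal{N}_h$ of $\mathbf{x}_c$ to
\begin{equation*}
(\Delta+k^2)u=-k^2 Vw\ \ \text{in}\ \mathcal{N}_h,\qquad u=\partial_\nu u=0\ \ \text{on}\ \partial\Omega\cap\partial\mathcal{N}_h.
\end{equation*}
In contrast with Theorem~\ref{thm:main1}, where the boundary jump $\partial_\nu w-\partial_\nu v=\eta v$ supplied the dominant corner contribution, the only channel now carrying corner information is the interior source $Vw$, and my plan is to isolate its averaged behaviour at $\mathbf{x}_c$.

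The main tool is the CGO solution $u_{0}(\mathbf{x})=e^{\zeta\cdot(\mathbf{x}-\mathbf{x}_c)}$ of $(\Delta+k^{2})u_{0}=0$, with $\zeta\in\mathbb{C}^{n}$, $\zeta\cdot\zeta=-k^{2}$, $|\zeta|=\tau$, and $\mathrm{Re}\,\zeta$ chosen so that $\mathrm{Re}(\zeta\cdot(\mathbf{x}-\mathbf{x}_c))\leq -c_0|\mathbf{x}-\mathbf{x}_c|$ throughout the sector containing $\mathcal{N}_h$. Then $|u_0|$ is exponentially small in $\tau$ outside any small ball around $\mathbf{x}_c$, while $\int_{\mathcal{N}_h}|u_{0}|\,d\mathbf{x}\simeq c_{\Omega}\tau^{-n}$, with $c_\Omega$ depending only on the opening of the corner. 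Green's identity in $\mathcal{N}_h$, together with $(\Delta+k^2)u_0=0$ and the homogeneous Cauchy data of $u$ on $\partial\Omega\cap\partial\mathcal{N}_h$, yields
\begin{equation*}
-k^{2}\int_{\mathcal{N}_h}Vw\,u_{0}\,d\mathbf{x}=\int_{\partial\mathcal{N}_h\setminus\partial\Omega}\bigl(u_{0}\,\partial_{\nu}u-u\,\partial_{\nu}u_{0}\bigr)\,dS.
\end{equation*}

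The right-hand side will be driven to $o(\tau^{-n})$ using the Herglotz approximation \eqref{eq:cond1}. Substituting $v_j$ for $v$ inside $u=w-v$ costs $j^{-\Upsilon}$ in $H^1(\mathcal{N}_h)$, while the traces of $v_j$ and $\partial_\nu v_j$ on $\partial\mathcal{N}_h\setminus\partial\Omega$ are polynomially controlled by $\|g_j\|_{L^2}\leq Cj^{\varrho}$ via standard estimates for entire Herglotz waves. Since $|u_0|+\tau^{-1}|\nabla u_0|\leq Ce^{-c_0\tau h}$ on $\partial\mathcal{N}_h\setminus\partial\Omega$, the boundary integral is bounded by $C\bigl(e^{-c_0\tau h}j^{\varrho}+j^{-\Upsilon}\bigr)$. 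Calibrating $\tau=\tau(j)\sim\Lambda h^{-1}\log j$ for $\Lambda$ large kills the first term, and since $\Upsilon>\varrho$ the second term tends to zero as $j\to\infty$. This delivers $\int_{\mathcal{N}_h}Vw\,u_{0}\,d\mathbf{x}=o(\tau^{-n})$ along $\tau=\tau(j)\to\infty$.

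Rescaling $\mathbf{x}-\mathbf{x}_c=\tau^{-1}\mathbf{y}$ turns $\tau^{n}u_{0}$ into an explicit, $\tau$-independent exponentially localised profile concentrated on the bounded portion of the corner cone. The Hölder continuity of $(1+V)w$, which controls the non-smooth part of $Vw=(1+V)w-w$ in the averaged sense, then allows one to identify $\tau^{n}\int_{\mathcal{N}_h}Vw\,u_{0}$, up to a bounded geometric factor, with the $L^{1}$-average of $Vw$ over $B(\mathbf{x}_c,\tau^{-1})\cap\Omega$, so that letting $\tau(j)\to\infty$ produces \eqref{newre}. The three-dimensional case is structurally identical, using the 3D CGO specified in Theorem~\ref{Theorem 3.1} (which accounts for the replacement of \eqref{eq:cond1} by \eqref{3.3}). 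I expect the main difficulty to lie in the boundary-error step: producing trace estimates for $v_j$ and $\partial_\nu v_j$ that grow only polynomially in $\|g_j\|_{L^2}$ while the exponential factor $e^{-c_0\tau h}$ continues to provide decay, which is precisely where the relaxed hypothesis $\varrho<\Upsilon$ in \eqref{eq:cond1} is essential.
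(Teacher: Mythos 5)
Your overall strategy (Green's identity in a corner neighbourhood against a CGO solution, exponential smallness away from the corner, extraction of the leading coefficient) is the right family of ideas, but the proposal has two genuine gaps, and it also mislocates where the Herglotz hypothesis does its work.

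First, the decisive step is the last one, and it does not go through as written. From your identity you obtain $\tau^{n}\int_{\mathcal{N}_h}Vw\,u_{0}\,\mathrm{d}\mathbf{x}\to 0$, and you then propose to ``identify'' $\tau^{n}\int_{\mathcal{N}_h}Vw\,u_{0}$ with the solid $L^{1}$-average of $Vw$ over $B(\mathbf{x}_c,\tau^{-1})\cap\Omega$. That identification is only valid for functions that are (H\"older) continuous at $\mathbf{x}_c$, in which case both quantities converge to the point value times explicit constants. Here $Vw=(1+V)w-w$, and only $(1+V)w$ is assumed H\"older; $w$ itself is merely $H^{1}$, so $\tau^{n}\int w\,u_{0}$ (an oscillatory weighted integral) has no a priori relation to the solid average of $w$. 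This is exactly the difficulty the paper's proof is built around: it never integrates $w$ alone against $u_{0}$, but instead works with $\Delta(v-w)=-k^{2}(v-qw)$, replaces the merely-$H^{1}$ function $v$ by the smooth Herglotz approximants $v_{j}$ (paying $j^{-\Upsilon}$ in $L^{2}$ and gaining a $C^{\alpha}$ bound of size $j^{\varrho}$), extracts $\lim_{j}v_{j}(\mathbf{0})=q w(\mathbf{0})$ by multiplying the identity by $s^{2}$ and matching against the exact asymptotic $\int_{W}u_{0}(s\mathbf{x})\,\mathrm{d}\mathbf{x}=6\mathbf{i}(e^{-2\theta_M\mathbf{i}}-e^{-2\theta_m\mathbf{i}})s^{-2}$, and only then passes to solid averages via $v_{j}\to v$ in $H^{1}\hookrightarrow L^{1}$ together with $v=w$ on $\partial\Omega$. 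Relatedly, you assert that the Herglotz condition is needed to control the boundary integral over $\partial\mathcal{N}_h\setminus\partial\Omega$; it is not (that term is exponentially small in $\tau$ for the exact $u=w-v$, by the trace theorem), and in fact the balance that forces a condition on $\varrho$ versus $\Upsilon$ comes from the interior terms after multiplying by $s^{2}$, which is why the detailed statements (Corollaries~\ref{maincor1} and \ref{maincor2}) require the stronger condition $\varrho<\alpha\Upsilon/2$ rather than $\varrho<\Upsilon$.

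Second, your choice of CGO $u_{0}=e^{\zeta\cdot(\mathbf{x}-\mathbf{x}_c)}$ with linear phase requires a real direction $a=\mathrm{Re}\,\zeta$ satisfying $a\cdot\hat{\mathbf{x}}\leq-c_{0}<0$ for every unit vector $\hat{\mathbf{x}}$ in the sector, which is possible only when the opening angle $\theta_M-\theta_m$ is strictly less than $\pi$. The theorem covers re-entrant corners with $\theta_M-\theta_m\in(\pi,2\pi)$ as well (only $\theta_M-\theta_m=\pi$ is excluded), and this is precisely why the paper uses the Bl{\aa}sten-type solution $u_{0}(\mathbf{x})=\exp\bigl(\sqrt{r}\,e^{\mathbf{i}(\theta/2+\pi)}\bigr)$ of Lemma~\ref{Lemma 2.1}, whose phase $\sqrt{r}\cos(\theta/2+\pi)$ is negative on every sector avoiding $\theta=\pm\pi$. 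As written, your argument does not cover concave corners.
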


 \begin{remark}\label{rem:a1}
     If $V(\Bx)$ is continuous near the corner $\Bx_c$ and $V(\Bx_c)\neq0$, from the fact that
     \begin{equation*}
        \begin{split}
            &\lim _{\rho \rightarrow+0} \frac{1}{m\left(B\left(\mathbf{x}_{c}, \rho\right)\cap\Omega\right)} \int_{B\left(\mathbf{x}_{c}, \rho\right)\cap\Omega} V(\mathbf{x}) w(\mathbf{x}) \mathrm{d} \mathbf{x}\\
            =&V\left(\mathbf{x}_{c}\right) \lim _{\rho \rightarrow+0} \frac{1}{m\left(B\left(\mathbf{x}_{c}, \rho\right)\cap\Omega\right)} \int_{B\left(\mathbf{x}_{c}, \rho\right)\cap\Omega} w(\mathbf{x}) \mathrm{d} \mathbf{x},
        \end{split}
     \end{equation*}
     one can readily see that $w$ vanishes near $\mathbf{x}_c$, which in turn implies the vanishing of $v$ near $\mathbf{x}_c$ by noting that $w$ and $v$ possess the same traces on $\partial\Omega$.
 \end{remark}

\begin{remark}
The vanishing of the transmission eigenfunctions in the case $\eta\equiv 0$ was also studied in \cite{ref3, ref1}. The regularity requirement in \cite{ref1} is the same as that described in Remark~\ref{rem:a0}, whereas in \cite{ref3}, the Herglotz approximation was required to be
    \begin{equation}
        \left\|v-v_{j}\right\|_{L^{2}(\mathcal{N}_h)} \leq e^{-j}, \quad\left\|g_{j}\right\|_{L^{2}\left(\mathbb{S}^{n-1}\right)} \leq C(\ln j)^{\beta},
    \end{equation}
    where the constants $C>0 \text { and } 0<\beta<1 /(2 n+8),(n=2,3)$. It is directly verified that the corresponding results in \cite{ref3,ref1} are included into Theorems~\ref{thm:main1} and \ref{thm:main2} as special cases. Nevertheless, it is pointed out that in \cite{ref3}, the technical condition $(1+V)w$ being H\"older continuous is not required and instead it is required that $V$ is H\"older continuous.
\end{remark}

Finally, we would like to give two general remarks on the vanishing properties of the transmission eigenfunctions.

\begin{remark}\label{rem:g1}
The vanishing properties established in Theorems~\ref{thm:main1} and \ref{thm:main2} as well as those in \cite{ref3,ref1} are of a completely local feature. That is, all the results hold for the partial-data transmission eigenvalue problem, namely in \eqref{1.1} the transmission boundary conditions on $\partial\Omega$ is required to hold only in a small neighbourhood of the corner point. It is mentioned that a global rigidity result of the geometric structure of the transmission eigenfunctions was presented in \cite{CDHLW1}.
\end{remark}

\begin{remark}\label{rem:g2}
According to our earlier discussion, if the transmission eigenfunctions $w$ and $v$ are H\"older continuous around the corner, then both of them vanish near the corner. Hence, in order to search for the transmission eigenfunctions that are non-vanishing near corners, especially those numerically found in \cite{BLLW} which are actually locally localizing around corners, one should consider transmission eigenfunctions whose regularity lies between $H^1$ and $C^\alpha$, $\alpha\in (0, 1)$. By using properties of the Herglotz approximation (cf. \cite{ref4}), one can show (though not straightforward) that the regularity criterion \eqref{2.52} defines a set of functions which includes some functions that are less regular than $C^\alpha$, but also does not include some functions which are more regular than $C^\alpha$. Hence, the regularity characterization in terms of the Herglotz approximation is of a different feature from the standard Sobolev regularity. Nevertheless, Theorems~\ref{thm:main1} and \ref{thm:main2} indicate that the vanishing near corners is a generic local geometric property of the transmission eigenfunctions.
\end{remark}

In what follows, Sections 2 and 3 are respectively devoted to the vanishing properties of the transmission eigenfunctions in two and three dimensions.

\section{Vanishing properties in two dimensions}

To facilitate calculation and analysis, we introduce the two-dimensional polar coordinates $(r, \theta)$ such that $\Bx=\left(x_{1}, x_{2}\right)=(r \cos \theta, r \sin \theta) \in \mathbb{R}^{2}$. Set $B_{h}:=B_{h}(\mathbf{0})$ for $h\in\mathbb{R}_+$. Define an open sector in $\mathbb{R}^{2}$ with the boundary $\Gamma^{\pm}$ as follows,
\begin{equation}
    W=\left\{\Bx \in \mathbb{R}^{2} |\ \Bx \neq \mathbf{0}, \ \theta_{m}<\arg \left(x_{1}+\mathbf{i} x_{2}\right)<\theta_{M}\right\}.\label{2.1}
\end{equation}
where $-\pi<\theta_{m}<\theta_{M}<\pi, \mathbf{i}:=\sqrt{-1}$ and $\Gamma^{+}$ and $\Gamma^{-}$ respectively are
$\left(r, \theta_{M}\right)$ and $\left(r, \theta_{m}\right)$ with $r>0 .$ Define that
\begin{equation}
    S_{h}=W \cap B_{h}, \Gamma_{h}^{\pm}=\Gamma^{\pm} \cap B_{h}, \bar{S}_{h}=\overline{W} \cap B_{h}, \Lambda_{h}=S_{h} \cap \partial B_{h}, \text { and } \Sigma_{\Lambda_{h}}=S_{h} \backslash S_{h / 2}.\label{2.2}
\end{equation}

We shall make use a particular type of planar complex geometrical optics (CGO) solution, which was first introduced in \cite{ref2}.

\newtheorem{lemma}{Lemma}[section]
\begin{lemma}\label{Lemma 2.1}
    \cite[Lemma 2.2]{ref2} For $\Bx\in \RR^2$ denote $r=|\Bx|,\theta=arg(x_1+\mathbf{i}x_2).$ Define
\begin{equation}
    u_{0}(\Bx):=\exp \left(\sqrt{r}\left(\cos \left(\frac{\theta}{2}+\pi\right)+\mathbf{i} \sin \left(\frac{\theta}{2}+\pi\right)\right)\right),\label{2.4}
\end{equation}
then $\Delta u_0=0 \text { in } \RR^2 \setminus \RR_{0,-}^{2}$, where $\RR_{0,-}^{2}:=\left\{\Bx \in \RR^2|\ \Bx = (x_1,x_2);\ x_1<=0, x_2=0\right\}$ and $s \mapsto u_{0}(sx)$ decays exponentially in $R_{+} .$ Let $\alpha, s>0 .$ Then
\begin{equation}
    \int_{W}\left|u_{0}(s \Bx) \| \Bx\right|^{\alpha} \mathrm{d} \Bx \leq \frac{2\left(\theta_{M}-\theta_{m}\right) \Gamma(2 \alpha+4)}{\delta_{W}^{2 \alpha+4}} s^{-\alpha-2}.\label{2.5}
\end{equation}
\text {where } $\delta_{W}=-\max _{\theta_{m}<\theta<\theta_{M}} \cos (\theta / 2+\pi)>0 . $\text { Moreover }
\begin{equation}
    \int_{W} u_{0}(s \Bx) \mathrm{d} x=6 \mathbf{i}\left(e^{-2 \theta_{M} \mathbf{i}}-e^{-2 \theta_{m} \mathbf{i}}\right) s^{-2}.\label{2.6}
\end{equation}
and for $h>0$
\begin{equation}
    \int_{W \backslash B_{h}}\left|u_{0}(s \Bx)\right| \mathrm{d} \Bx \leq \frac{6\left(\theta_{M}-\theta_{m}\right)}{\delta_{W}^{4}} s^{-2} e^{-\delta_{W} \sqrt{h s} / 2}.\label{2.7}
\end{equation}
\end{lemma}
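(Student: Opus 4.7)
The plan is to recognize $u_{0}$ as the exponential of a holomorphic function and then reduce each of the estimates \eqref{2.5}--\eqref{2.7} to an elementary (incomplete) Gamma integral in polar coordinates. Writing $z = x_{1} + \mathbf{i} x_{2} = r e^{\mathbf{i}\theta}$ and choosing the principal branch of the square root on $\mathbb{C}\setminus (-\infty,0]$, one has $\sqrt{z} = \sqrt{r}(\cos(\theta/2) + \mathbf{i}\sin(\theta/2))$, so that
\begin{equation*}
u_{0}(\Bx) = \exp(-\sqrt{z}).
\end{equation*}
This identification immediately delivers the harmonicity on $\RR^{2}\setminus \RR_{0,-}^{2}$ (holomorphic implies harmonic), together with the sharp pointwise bound
\begin{equation*}
|u_{0}(s\Bx)| = \exp\bigl(-\sqrt{sr}\,\cos(\theta/2)\bigr) \leq \exp(-\delta_{W}\sqrt{sr})\qquad (\Bx \in W),
\end{equation*}
since $\cos(\theta/2) \geq \delta_{W}$ on $W$ by the very definition of $\delta_{W}$. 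In particular, $s\mapsto u_{0}(s\Bx)$ decays exponentially, as claimed.

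For \eqref{2.5}, I would pass to polar coordinates, insert the above pointwise bound, and observe that the angular integration contributes only the factor $\theta_{M}-\theta_{m}$. The radial integral $\int_{0}^{\infty} r^{\alpha+1} e^{-\delta_{W}\sqrt{sr}}\,\mathrm{d}r$ is then evaluated via the substitution $t = \delta_{W}\sqrt{sr}$, which reduces it to $2(\delta_{W}^{2} s)^{-(\alpha+2)}\Gamma(2\alpha+4)$, giving exactly the claimed constant.

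For \eqref{2.6}, the same polar change of variables yields
\begin{equation*}
\int_{W} u_{0}(s\Bx)\,\mathrm{d}\Bx = \int_{\theta_{m}}^{\theta_{M}}\int_{0}^{\infty} r\,\exp\bigl(-\sqrt{sr}\,e^{\mathbf{i}\theta/2}\bigr)\,\mathrm{d}r\,\mathrm{d}\theta.
\end{equation*}
Substituting $t = \sqrt{sr}$ and invoking the complex Laplace identity $\int_{0}^{\infty} t^{3} e^{-at}\,\mathrm{d}t = 6/a^{4}$, valid whenever $\mathrm{Re}\,a > 0$, with $a = e^{\mathbf{i}\theta/2}$, reduces the radial integral to $12 s^{-2}e^{-2\mathbf{i}\theta}$; an elementary integration in $\theta$ against $\mathrm{d}\theta$ then produces the stated value $6\mathbf{i}(e^{-2\theta_{M}\mathbf{i}} - e^{-2\theta_{m}\mathbf{i}}) s^{-2}$, where the factor $\mathbf{i}$ arises from $-1/\mathbf{i}=\mathbf{i}$.

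For \eqref{2.7}, the same polar/substitution procedure reduces the task to estimating the incomplete Gamma tail $\int_{T}^{\infty} t^{3} e^{-t}\,\mathrm{d}t$ with $T = \delta_{W}\sqrt{hs}$. Writing $e^{-t} = e^{-t/2}\cdot e^{-t/2}$ and using that $t^{3} e^{-t/2}$ is uniformly bounded in $t\geq 0$, the tail is controlled by a constant multiple of $e^{-T/2}$, and collecting constants reproduces the claimed inequality. I expect the only mildly delicate issue to be the bookkeeping of the explicit constant in this last step, since the stated prefactor $6/\delta_{W}^{4}$ is tighter than what the most naive pointwise bound on $t^{3}e^{-t/2}$ produces; the rest of the lemma is essentially routine once the identification $u_{0} = e^{-\sqrt{z}}$ is noticed.
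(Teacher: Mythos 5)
The paper does not prove this lemma at all; it is quoted verbatim from \cite[Lemma 2.2]{ref2}, so there is no in-paper argument to compare against. Your reconstruction is nonetheless the standard one and is essentially correct: the identification $u_{0}(\Bx)=e^{-\sqrt{z}}$ with $z=x_{1}+\mathbf{i}x_{2}$ (principal branch) follows from $\cos(\theta/2+\pi)=-\cos(\theta/2)$, $\sin(\theta/2+\pi)=-\sin(\theta/2)$, and gives harmonicity off $\RR^{2}_{0,-}$ and the pointwise bound $|u_{0}(s\Bx)|=e^{-\sqrt{sr}\cos(\theta/2)}\le e^{-\delta_{W}\sqrt{sr}}$ on $W$, since $\delta_{W}=\min_{\theta_m<\theta<\theta_M}\cos(\theta/2)$. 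Your substitution $t=\delta_{W}\sqrt{sr}$ reproduces \eqref{2.5} with exactly the stated constant, and the Laplace identity with $a=e^{\mathbf{i}\theta/2}$ (legitimate since $\operatorname{Re}a=\cos(\theta/2)>0$ on $W$) gives \eqref{2.6} exactly. The one place where you are right to be uneasy is the constant in \eqref{2.7}: the substitution reduces the left side to
\begin{equation*}
(\theta_{M}-\theta_{m})\,\frac{2}{\delta_{W}^{4}s^{2}}\int_{T}^{\infty}t^{3}e^{-t}\,\mathrm{d}t,\qquad T=\delta_{W}\sqrt{hs},
\end{equation*}
and your tail estimate $\int_{T}^{\infty}t^{3}e^{-t}\,\mathrm{d}t\le e^{-T/2}\int_{0}^{\infty}t^{3}e^{-t/2}\,\mathrm{d}t=96\,e^{-T/2}$ yields the prefactor $192$ rather than $6$; in fact the stated constant cannot be recovered this way at all, since the required inequality $2\Gamma(4,T)\le 6e^{-T/2}$ already fails at $T=0$ where $\Gamma(4,0)=6$. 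This is a defect of the quoted constant (or requires a sharper angular estimate), not of your method, and it is harmless for every use of \eqref{2.7} in this paper — e.g.\ in the derivation of \eqref{U0} only the structure $C(W)s^{-2}e^{-c\sqrt{hs}}$ is used, never the numerical value $6$.
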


By direct calculations, one can obtain the following estimates for the CGO solution $u_0(s\Bx)$:
\begin{cor}\label{cor2.1}
   $u_0 \notin H^2(B_{\varepsilon})$ near the origin and $|u_0(s\Bx)|\leq1$ in $B_{\varepsilon}$ for sufficiently small $\varepsilon$.
\end{cor}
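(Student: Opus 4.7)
The plan is to exploit the complex-analytic representation of $u_0$. Observing that $\cos(\theta/2+\pi)+\mathbf{i}\sin(\theta/2+\pi)=e^{\mathbf{i}(\theta/2+\pi)}=-e^{\mathbf{i}\theta/2}$, and that $\sqrt{r}\,e^{\mathbf{i}\theta/2}$ is the principal branch of $\sqrt{z}$ with $z=x_1+\mathbf{i}x_2$ on the slit plane $\mathbb{R}^2\setminus\mathbb{R}^2_{0,-}$, one rewrites $u_0(\Bx)=\exp(-\sqrt{z})$. This form makes $u_0$ manifestly holomorphic off the negative real axis, which is the property that drives both halves of the corollary.

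For the pointwise bound, I would first note that $|u_0(s\Bx)|=\exp\!\bigl(\sqrt{sr}\cos(\theta/2+\pi)\bigr)$, since scaling by $s>0$ leaves the argument $\theta$ unchanged and multiplies $r$ by $s$. Because $\theta\in(-\pi,\pi)$ on the slit plane, $\theta/2+\pi$ lies in $(\pi/2,3\pi/2)$, so $\cos(\theta/2+\pi)\le 0$. Hence the exponent is nonpositive and $|u_0(s\Bx)|\le 1$ on $\mathbb{R}^2\setminus\mathbb{R}^2_{0,-}$; the restriction to $B_\varepsilon$ is automatic (and in fact the bound is global — the phrasing ``for sufficiently small $\varepsilon$'' is just for compatibility with the way the estimate will later be invoked).

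For the failure of $H^2$-regularity, I would use that if $f$ is holomorphic then the Cauchy--Riemann equations give $\partial_{x_1}^2 f=f''(z)$, $\partial_{x_2}^2 f=-f''(z)$ and $\partial_{x_1}\partial_{x_2} f=\mathbf{i}\, f''(z)$, so it suffices to examine $|f''(z)|$. A direct differentiation of $f(z)=e^{-\sqrt{z}}$ yields
\begin{equation*}
f''(z)=\frac{e^{-\sqrt{z}}}{4z^{3/2}}+\frac{e^{-\sqrt{z}}}{4z},
\end{equation*}
so $|f''(z)|\gtrsim |z|^{-3/2}$ as $z\to 0$ (the $z^{-3/2}$ term dominates the $z^{-1}$ one). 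Polar integration then gives
\begin{equation*}
\int_{B_\varepsilon}|\partial_{x_j}\partial_{x_k}u_0|^2\,\mathrm{d}\Bx\;\gtrsim\;\int_0^\varepsilon\!\!\int_{-\pi}^{\pi} r^{-3}\cdot r\,\mathrm{d}\theta\,\mathrm{d}r=2\pi\int_0^\varepsilon r^{-2}\,\mathrm{d}r=+\infty,
\end{equation*}
where the slit has measure zero and can be ignored. This proves $u_0\notin H^2(B_\varepsilon)$ for every $\varepsilon>0$.

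I do not foresee any real obstacle: the only subtlety is recognising $u_0$ as $e^{-\sqrt{z}}$ so that one can bypass the somewhat cumbersome polar derivatives. Once that identification is made, the boundedness follows from the sign of $\cos(\theta/2+\pi)$ on the slit plane, and the non-membership in $H^2$ follows from the elementary observation that the singularity of $f''(z)$ at the origin is precisely strong enough to defeat two-dimensional integration.
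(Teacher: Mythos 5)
Your proof is correct, and since the paper offers no argument for this corollary beyond the phrase ``by direct calculations,'' your computation is essentially the intended one, carried out via the clean identification $u_0(\Bx)=e^{-\sqrt{z}}$ with $z=x_1+\mathbf{i}x_2$ on the slit plane. Both halves check out: $\cos(\theta/2+\pi)\le 0$ for $\theta\in(-\pi,\pi)$ gives the global bound $|u_0(s\Bx)|\le 1$, and the formula $f''(z)=\tfrac14 e^{-\sqrt{z}}\bigl(z^{-3/2}+z^{-1}\bigr)$ together with the Cauchy--Riemann identities shows every second partial has modulus $\gtrsim r^{-3/2}$ near the origin, whose square fails to be integrable against $r\,\mathrm{d}r\,\mathrm{d}\theta$, so $u_0\notin H^2(B_\varepsilon)$.
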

Furthermore, one has the following result:
\begin{cor}\label{cor2.2}
    The following estimates hold for the $L^2$ norm of $u_0$,
    \begin{align}
     \left\|u_{0}(s \Bx)\right\|_{L^{2}\left(S_{h}\right)}^{2}  & \leq \frac{\left(\theta_{M}-\theta_{m}\right) e^{-2 \sqrt{s \Theta} \delta_{W}} h^{2}}{2}, \quad \Theta\in (0, h),\\
       \left\|u_{0}(s \Bx)\right\|_{L^{2}\left(\Lambda_{h}\right)} & \leq \sqrt{h} e^{-\delta_{W} \sqrt{s h}} \sqrt{\theta_{M}-\theta_{m}},\\
       \left\|\partial_{\nu} u_{0}(s \Bx)\right\|_{L^{2}\left(\Lambda_{h}\right)} & \leq \frac{1}{2} \sqrt{s} e^{-\delta_{W} \sqrt{s h}} \sqrt{\theta_{M}-\theta_{m}},\\
        \left\|\partial_{\theta} u_{0}(s \Bx)\right\|_{L^{2}\left(\Lambda_{h}\right)} & \leq  \frac{\sqrt{s}}{2}h^2 e^{-\delta_{W} \sqrt{s h}} \sqrt{\theta_{M}-\theta_{m}},\\
 \left\||\Bx|^{\alpha} u_{0}(s \Bx)\right\|_{L^{2}\left(S_{h}\right)}^{2} & \leq s^{-(2 \alpha+2)} \frac{2\left(\theta_{M}-\theta_{m}\right)}{\left(4 \delta_{W}^{2}\right)^{2 \alpha+2}} \Gamma(4 \alpha+4),
    \end{align}
 where $\delta_{W}$ is defined in (\ref{2.5}) and $S_{h}$, $\Lambda_{h}$ are defined in (\ref{2.2}).
\end{cor}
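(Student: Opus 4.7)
The plan is to exploit the explicit polar form $|u_0(s\Bx)| = \exp(\sqrt{sr}\cos(\theta/2+\pi))$, which, by the very definition $\delta_W = -\max_{\theta_m < \theta < \theta_M}\cos(\theta/2+\pi) > 0$ from Lemma~\ref{Lemma 2.1}, is uniformly bounded by $\exp(-\delta_W\sqrt{sr})$ on the sector $W$. Every one of the five inequalities then reduces, after passing to polar coordinates, to a one-dimensional integral in $r$ against this decaying envelope.

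For the first inequality I would write $\|u_0(s\Bx)\|_{L^2(S_h)}^2 \leq (\theta_M-\theta_m)\int_0^h r\,e^{-2\delta_W\sqrt{sr}}\,dr$ and invoke the first mean value theorem for integrals: since $r\mapsto e^{-2\delta_W\sqrt{sr}}$ is continuous on $[0,h]$, there exists $\Theta\in(0,h)$ for which the integral equals $e^{-2\delta_W\sqrt{s\Theta}}\int_0^h r\,dr = \tfrac{h^2}{2}e^{-2\delta_W\sqrt{s\Theta}}$, which is exactly the claim. For the fifth (weighted) inequality, after the same polar reduction I would carry out the substitution $u = 2\delta_W\sqrt{sr}$, which converts the remaining $r$-integral into the incomplete Gamma integral $\int_0^{2\delta_W\sqrt{sh}} u^{4\alpha+3}e^{-u}\,du$; bounding this by $\Gamma(4\alpha+4)$ and collecting the powers of $s$ and $\delta_W$ produced by the Jacobian yields precisely the stated constant $2(\theta_M-\theta_m)/(4\delta_W^2)^{2\alpha+2}$.

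For the three estimates on the arc $\Lambda_h = \{r=h,\ \theta\in(\theta_m,\theta_M)\}$, I would differentiate the complex expression $u_0(s\Bx) = \exp(\sqrt{sr}\,e^{\mathbf{i}(\theta/2+\pi)})$ directly: the chain rule gives $\partial_r u_0(s\Bx) = \tfrac{\sqrt{s}}{2\sqrt{r}}e^{\mathbf{i}(\theta/2+\pi)}u_0(s\Bx)$ and $\partial_\theta u_0(s\Bx) = \tfrac{\mathbf{i}}{2}\sqrt{sr}\,e^{\mathbf{i}(\theta/2+\pi)}u_0(s\Bx)$. Taking absolute values kills the oscillatory factor and leaves the $e^{-\delta_W\sqrt{sr}}$ envelope multiplied by $\sqrt{s}/(2\sqrt{r})$ and $\sqrt{sr}/2$ respectively. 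Since the radial variable is frozen at $r=h$ on $\Lambda_h$ and the arc-length element is $h\,d\theta$, each norm becomes $(\theta_M-\theta_m)$ times an elementary constant in $s$, $h$, and $\delta_W$, matching the stated bounds for $u_0$, $\partial_\nu u_0$, and $\partial_\theta u_0$.

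No step is genuinely hard; the proof is essentially a single pointwise envelope bound followed by careful bookkeeping of constants in polar coordinates. The only slightly delicate ingredient is the appeal to the first mean value theorem for integrals in the first inequality, which is what produces the intermediate parameter $\Theta\in(0,h)$ in the statement; all remaining estimates are direct calculations once the formulas for $\partial_r u_0(s\Bx)$ and $\partial_\theta u_0(s\Bx)$ are written down.
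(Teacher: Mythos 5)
Your proposal is correct and follows essentially the same route as the paper: the uniform envelope $|u_0(s\Bx)|\le e^{-\delta_W\sqrt{sr}}$ on $W$, the first mean value theorem for integrals to produce $\Theta\in(0,h)$ in the first bound, direct differentiation of $\exp(\sqrt{sr}\,e^{\mathbf{i}(\theta/2+\pi)})$ for the three arc estimates with arc-length element $h\,\mathrm{d}\theta$, and the substitution $t=2\delta_W\sqrt{sr}$ reducing the weighted bound to an incomplete Gamma integral dominated by $\Gamma(4\alpha+4)$. One small caveat: for $\partial_\theta u_0$ your computation (like the paper's own pointwise bound $\tfrac{\sqrt{sh}}{2}e^{-\delta_W\sqrt{sh}}$) actually yields the factor $\tfrac{\sqrt{s}}{2}h$ rather than the $\tfrac{\sqrt{s}}{2}h^2$ printed in the statement, a discrepancy that originates in the paper itself rather than in your argument.
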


\begin{proof}
    Using the integral mean value theorem, one can deduce that
    \begin{equation*}
        \begin{split}
            \left\|u_{0}(s \Bx)\right\|_{L^{2}\left(S_{h}\right)}^{2} &=\int_{0}^{h} r \mathrm{d} r \int_{\theta_{m}}^{\theta_{M}} e^{2 \sqrt{s r} \cos (\theta / 2+\pi)} \mathrm{d} \theta \\
            &\leq \int_{0}^{h} r \mathrm{d} r \int_{\theta_{m}}^{\theta_{M}} e^{-2 \sqrt{s r} \delta_{W}} \mathrm{d} \theta \\
        &=\frac{\left(\theta_{M}-\theta_{m}\right) e^{-2 \sqrt{s \Theta} \delta_{W}} h^{2}}{2},
        \end{split}
    \end{equation*}
where $\Theta\in (0, h)$.
On $\Lambda_{h}$, it can be seen that
    \begin{equation*}
        \begin{split}
            \left|u_{0}(s \Bx)\right| &=e^{\sqrt{s h} \cos (\theta / 2+\pi)} \leq e^{-\delta_{W} \sqrt{s h}} \\
        \left|\partial_{\nu} u_{0}(s \Bx)\right| &=\left|\frac{\sqrt{s} e^{i \cos (\theta / 2+\pi)}}{2 \sqrt{h}} e^{\sqrt{s h} \exp (i(\theta / 2+\pi))}\right| \leq \frac{1}{2} \sqrt{\frac{s}{h}} e^{-\delta_{W} \sqrt{s h}}\\
        \left|\partial_{\theta} u_{0}(s \Bx)\right| &=\left|\frac{\sqrt{sh} }{2 } e^{\sqrt{s h} \exp (i(\theta / 2+\pi))}\right| \leq \frac{\sqrt{sh} }{2 } e^{-\delta_{W} \sqrt{s h}},
        \end{split}
    \end{equation*}
all of which decay exponentially as $s \rightarrow \infty $. By straightforward calculations, one can show that
    \begin{equation*}
        \begin{split}
        \left\|u_{0}(s \Bx)\right\|_{L^{2}\left(\Lambda_{h}\right)} &\leq \sqrt{h}e^{-\delta_{W} \sqrt{s h}} \sqrt{\theta_{M}-\theta_{m}}, \\
        \left\|\partial_{\nu} u_{0}(s \Bx)\right\|_{L^{2}\left(\Lambda_{h}\right)} &\leq \frac{1}{2} \sqrt{s} e^{-\delta_{W} \sqrt{s h}} \sqrt{\theta_{M}-\theta_{m}},\\
       \left\|\partial_{\theta} u_{0}(s \Bx)\right\|_{L^{2}\left(\Lambda_{h}\right)} &\leq \frac{\sqrt{s}}{2}h^2 e^{-\delta_{W} \sqrt{s h}} \sqrt{\theta_{M}-\theta_{m}}.
    \end{split}
    \end{equation*}
Using polar coordinates, we can deduce that
\begin{equation}
    \begin{split}
        \quad\left\||\Bx|^{\alpha} u_{0}(s \Bx)\right\|_{L^{2}\left(S_{h}\right)}^{2}&=\int_{0}^{h} r \mathrm{d} r \int_{\theta_{m}}^{\theta_{M}} r^{2 \alpha} e^{2 \sqrt{s r} \cos (\theta / 2+\pi)} \mathrm{d} \theta \\
        &\leq \int_{0}^{h} r \mathrm{d} r \int_{\theta_{m}}^{\theta_{M}} r^{2 \alpha} e^{-2 \sqrt{s r} \delta w} \mathrm{d} \theta\\
        &=\left(\theta_{M}-\theta_{m}\right) \int_{0}^{h} r^{2 \alpha+1} e^{-2 \delta_{W} \sqrt{s r}} \mathrm{d} r \quad\left(t=2 \delta_{W} \sqrt{s r}\right) \\
        &=s^{-(2 \alpha+2)} \frac{2\left(\theta_{M}-\theta_{m}\right)}{\left(4 \delta_{W}^{2}\right)^{2 \alpha+2}} \int_{0}^{2 \delta_{W} \sqrt{s h}} t^{4 \alpha+3} e^{-t} \mathrm{d} r \\
        &\leq s^{-(2 \alpha+2)} \frac{2\left(\theta_{M}-\theta_{m}\right)}{\left(4 \delta_{W}^{2}\right)^{2 \alpha+2}}\Gamma(4 \alpha+4),
        \end{split}
\end{equation}
which completes the proof.
\end{proof}
Next, by direct computations and the compact embeddings of H\"older spaces, one can easily obtain the following result:
\begin{lemma}\label{lemma2.2}
    For the Herglotz wave function $v_j$ defined in \eqref{2.3} in two dimensions,
    \begin{equation}
        \begin{split}
            \left\|v_{j}\right\|_{C^{1}} &\leq \sqrt{2 \pi}(1+k)\left\|g_{j}\right\|_{L^{2}\left(\mathbb{S}^{n-1}\right)},\\
            \left\|v_{j}\right\|_{C^{\alpha}} &\leq \operatorname{diam}\left(S_{h}\right)^{1-\alpha}\left\|v_{j}\right\|_{C^{1}}.
        \end{split}
    \end{equation}
    where $0<\alpha<1$ and $\operatorname{diam}\left(S_{h}\right)$ is the diameter of $S_{h}$.
\end{lemma}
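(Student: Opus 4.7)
The plan is to derive both bounds directly from the integral representation \eqref{2.3}, using Cauchy--Schwarz on the unit circle for the $C^1$ estimate and a standard $C^0$--$C^1$ interpolation for the H\"older one.

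For the first inequality I would estimate $v_j$ and its gradient pointwise. Since $|e^{ik\xi\cdot\Bx}|=1$ and $|\xi|=1$, Cauchy--Schwarz on $\mathbb{S}^{1}$ (whose arc length equals $2\pi$) yields
\[
|v_j(\Bx)|\le\bigl(|\mathbb{S}^{1}|\bigr)^{1/2}\|g_j\|_{L^{2}(\mathbb{S}^{1})}=\sqrt{2\pi}\,\|g_j\|_{L^{2}(\mathbb{S}^{1})}.
\]
Differentiating under the integral in \eqref{2.3} brings out a factor $\mathbf{i}k\xi$, so the same estimate applied to $\nabla v_j$ gives $|\nabla v_j(\Bx)|\le k\sqrt{2\pi}\,\|g_j\|_{L^{2}(\mathbb{S}^{1})}$. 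Taking suprema over $\Bx$ and adding the two bounds produces the first claimed inequality with constant $\sqrt{2\pi}(1+k)$.

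For the H\"older estimate I would use the mean value bound $|v_j(\Bx)-v_j(\mathbf{y})|\le\|\nabla v_j\|_{\infty}|\Bx-\mathbf{y}|$, valid for any two points because $v_j$ is an entire solution of the Helmholtz equation and hence smooth on all of $\mathbb{R}^{2}$. Splitting $|\Bx-\mathbf{y}|=|\Bx-\mathbf{y}|^{\alpha}|\Bx-\mathbf{y}|^{1-\alpha}$ and using $|\Bx-\mathbf{y}|\le\operatorname{diam}(S_h)$ for $\Bx,\mathbf{y}\in S_h$ gives the seminorm estimate
\[
\frac{|v_j(\Bx)-v_j(\mathbf{y})|}{|\Bx-\mathbf{y}|^{\alpha}}\le\operatorname{diam}(S_h)^{1-\alpha}\|\nabla v_j\|_{\infty}.
\]
Combining this with the $C^{0}$ contribution and absorbing both terms into $\|v_j\|_{C^{1}}=\|v_j\|_{\infty}+\|\nabla v_j\|_{\infty}$ yields the second inequality.

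There is no serious obstacle: the proof is two Cauchy--Schwarz estimates and one interpolation split. The only small care needed is choosing the norm convention so that $\|v_j\|_{\infty}$ and $\operatorname{diam}(S_h)^{1-\alpha}\|\nabla v_j\|_{\infty}$ can both be bundled into $\operatorname{diam}(S_h)^{1-\alpha}\|v_j\|_{C^{1}}$, and noting that entirety of $v_j$ allows the mean value inequality to be applied irrespective of whether $S_h$ is convex at the corner.
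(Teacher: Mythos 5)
Your proposal is correct and is exactly the ``direct computations and the compact embeddings of H\"older spaces'' that the paper invokes without writing out: Cauchy--Schwarz on $\mathbb{S}^1$ for the pointwise bounds on $v_j$ and $\nabla v_j$, then the mean value inequality with the split $|\Bx-\mathbf{y}|=|\Bx-\mathbf{y}|^{\alpha}|\Bx-\mathbf{y}|^{1-\alpha}$ for the H\"older seminorm. Your closing caveat about the norm convention is well taken --- as literally written the second inequality only holds for the seminorm part unless $\operatorname{diam}(S_h)\geq 1$ --- but since the lemma is only ever used downstream to bound $|\delta f(\Bx)|\leq\|f\|_{C^{\alpha}}|\Bx|^{\alpha}$, the seminorm estimate is all that is actually needed.
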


Furthermore, using the Jacobi-Anger expansion (\cite[Page 75]{ref4}) in $\RR^2$, we can obtain the following lemma.
\begin{lemma}\label{lemma2.3}
The Helgloltz wave function $v_j$ defined in \eqref{2.3} admits the following asymptotic expansion:
    \begin{equation}
        \begin{split}
            v_{j}(\Bx)&=J_{0}(k|\Bx|)\int_{\mathbb{S}^{n-1}} g_{j}(\theta) \mathrm{d} \sigma(\theta) + 2 \sum_{p=1}^{\infty} \mathbf{i}^{p} J_{p}(k|\Bx|) \int_{\mathbb{S}^{n-1}} g_{j}(\theta) \cos (p \varphi) \mathrm{d} \sigma(\theta)\\
        &:=v_{j}(0) J_{0}(k|\Bx|)+2 \sum_{p=1}^{\infty}  \mathbf{i}^{p} J_{p}(k|\Bx|) \gamma_{p j}, \quad \Bx \in \mathbb{R}^{2}.
        \end{split}
    \end{equation}
    where $J_{p}(t)$ is the p-th Bessel function of the first kind. Indeed,
    \begin{equation}
        J_{p}(t)=\frac{t^{p}}{2^{p} p !}+\frac{t^{p}}{2^{p}} \sum_{\ell=1}^{\infty} \frac{(-1)^{\ell} t^{2 \ell}}{4^{\ell}(\ell !)^{2}}, \quad  p=0, 1, 2, \ldots
    \end{equation}
\end{lemma}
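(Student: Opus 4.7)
The plan is to insert the Jacobi--Anger expansion into the integral representation \eqref{2.3} and interchange the resulting sum with the integration over $\mathbb{S}^{1}$. Concretely, I parametrise $\mathbf{x}=|\mathbf{x}|(\cos\phi,\sin\phi)$ and $\xi=(\cos\theta,\sin\theta)\in\mathbb{S}^{1}$, so that the phase in the integrand becomes $k\xi\cdot\mathbf{x}=k|\mathbf{x}|\cos(\theta-\phi)$. The classical Jacobi--Anger expansion (cf.\ \cite[Page~75]{ref4}) then gives
\begin{equation*}
e^{\mathbf{i} k|\mathbf{x}|\cos(\theta-\phi)}
= J_{0}(k|\mathbf{x}|)+2\sum_{p=1}^{\infty}\mathbf{i}^{p}J_{p}(k|\mathbf{x}|)\cos\bigl(p(\theta-\phi)\bigr),
\end{equation*}
which is the two-dimensional statement I need; writing $\varphi:=\theta-\phi$ matches the notation of the lemma.

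Next I multiply this identity by $g_{j}(\theta)$ and integrate over $\mathbb{S}^{1}$. To justify swapping the infinite sum with the integral it suffices to note that the series above converges uniformly in $\theta\in\mathbb{S}^{1}$ on any bounded set of $\mathbf{x}$, since $|J_{p}(k|\mathbf{x}|)|\le (k|\mathbf{x}|)^{p}/(2^{p}p!)$ for $p$ large by the power series representation asserted at the end of the lemma, and this bound is summable. Combined with $g_{j}\in L^{2}(\mathbb{S}^{1})\subset L^{1}(\mathbb{S}^{1})$, dominated convergence (or uniform convergence) yields
\begin{equation*}
v_{j}(\mathbf{x})=J_{0}(k|\mathbf{x}|)\int_{\mathbb{S}^{1}} g_{j}(\theta)\,\mathrm{d}\sigma(\theta)
+2\sum_{p=1}^{\infty}\mathbf{i}^{p}J_{p}(k|\mathbf{x}|)\int_{\mathbb{S}^{1}} g_{j}(\theta)\cos(p\varphi)\,\mathrm{d}\sigma(\theta),
\end{equation*}
which is the first displayed identity of Lemma~\ref{lemma2.3}. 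Setting $\mathbf{x}=\mathbf{0}$ in the definition \eqref{2.3} and using $J_{0}(0)=1$, $J_{p}(0)=0$ for $p\ge1$ gives $v_{j}(0)=\int_{\mathbb{S}^{1}}g_{j}(\theta)\,\mathrm{d}\sigma(\theta)$, so the leading term can be rewritten as $v_{j}(0)J_{0}(k|\mathbf{x}|)$, and the coefficients in the tail become precisely the $\gamma_{pj}$ declared in the lemma.

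Finally, the stated power-series representation of $J_{p}(t)$ is just the standard definition of the Bessel function of the first kind and does not require any new work.

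The proof as a whole is essentially a bookkeeping exercise: the only point deserving care is the justification of the interchange of summation and integration, for which the uniform estimate on $J_{p}$ together with the finite measure of $\mathbb{S}^{1}$ is more than enough. The mild notational obstacle is the implicit dependence of $\varphi$ on the argument of $\mathbf{x}$, but this is only a choice of polar axis and does not affect the mathematical content.
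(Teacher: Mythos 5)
Your argument is exactly the one the paper intends: it proves this lemma by simply invoking the two-dimensional Jacobi--Anger expansion from \cite[Page~75]{ref4} and integrating it against $g_j$, and your justification of the sum--integral interchange (uniform convergence of the Bessel series on bounded sets, $g_j\in L^2(\mathbb{S}^1)\subset L^1(\mathbb{S}^1)$) together with the identification $v_j(\mathbf{0})=\int_{\mathbb{S}^1}g_j\,\mathrm{d}\sigma$ is correct. One small caveat: the displayed power series with $(\ell!)^2$ in the denominator is \emph{not} the standard representation of $J_p$ for $p\ge 1$ (which has $\ell!\,(\ell+p)!$ there), so your closing remark that it is ``just the standard definition'' endorses what is in fact a typo in the statement; this does not affect the main expansion identity you proved.
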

\begin{lemma}\label{lemma2.4}
    Suppose $f(\Bx)\in C^{\alpha}$, then the following expansion holds near the origin:
    \begin{equation}
            f(\Bx)=f(\mathbf{0})+\delta f(\Bx),\quad |\delta f(\Bx)|\leq \left\|f\right\|_{C^\alpha} |\Bx|^{\alpha}.
    \end{equation}
\end{lemma}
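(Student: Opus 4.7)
The statement is essentially a tautological reformulation of what it means for $f$ to be Hölder continuous at the origin, so the plan is short and the main issue is being explicit about the convention for the $C^\alpha$ norm.

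My plan is to simply \emph{define} the remainder by
\[
\delta f(\Bx) := f(\Bx) - f(\mathbf{0}),
\]
which makes the decomposition $f(\Bx) = f(\mathbf{0}) + \delta f(\Bx)$ a trivial identity that holds everywhere, in particular near the origin. The substantive content is then the pointwise bound on $\delta f$. Invoking the definition of the Hölder seminorm
\[
[f]_{C^\alpha} = \sup_{\Bx \neq \By} \frac{|f(\Bx) - f(\By)|}{|\Bx - \By|^\alpha},
\]
and specializing to $\By = \mathbf{0}$, one gets $|f(\Bx) - f(\mathbf{0})| \leq [f]_{C^\alpha} |\Bx|^\alpha$. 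Since by the standard convention $[f]_{C^\alpha} \leq \|f\|_{C^\alpha}$ (the full Hölder norm being the sup norm plus the seminorm), the claimed estimate $|\delta f(\Bx)| \leq \|f\|_{C^\alpha} |\Bx|^\alpha$ follows immediately.

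The only minor subtlety, and the one point a reader might quibble with, is whether the Hölder constant appearing on the right-hand side refers to the seminorm $[f]_{C^\alpha}$ or the full norm $\|f\|_{C^\alpha}$; since the latter dominates the former, writing $\|f\|_{C^\alpha}$ is always safe. There is no hard step here, no obstacle, and no need for further machinery — the lemma is a packaging result intended to be cited in the subsequent analysis of the Herglotz expansion together with Lemma~\ref{lemma2.3}, where $f$ will typically be taken to be $(1+V)w$ or $\eta$ on the corner neighborhood.
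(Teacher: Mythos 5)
Your proof is correct and is exactly the intended argument: the paper states Lemma~\ref{lemma2.4} without proof precisely because it is the definition of H\"older continuity applied at the origin, with the remainder $\delta f(\Bx):=f(\Bx)-f(\mathbf{0})$ bounded by the seminorm and hence by the full norm. Your remark about the seminorm versus the full norm is a fair clarification but does not change anything of substance.
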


 \begin{lemma}\label{lemma2.5}
     Let $v,w \in H^{1}(\Omega)$ be a pair of conductive transmission eigenfunctions to \eqref{1.1}, $D_{\varepsilon}=S_{h}\setminus B_{\varepsilon}$ for $0<\varepsilon<h$, $\eta \in C^{\alpha}\left(\bar{\Gamma}_{h}^{\pm}\right)$ for $0<\alpha<1$ and $\Gamma ^{\pm}_{h}$, $S_{h}$ be defined in \eqref{2.2}, then
     \begin{equation}
         \begin{split}
            \lim_{\varepsilon \rightarrow 0} \int _{D_{\varepsilon}} \Delta (v-w)u_{0}(s\Bx) \mathrm{d} \Bx
            = &\int_{\Lambda_{h}}\left(u_{0}(s \Bx) \partial_{\nu}(v-w)-(v-w) \partial_{\nu} u_{0}(s \Bx)\right) \mathrm{d} \sigma\\
            &-\int_{\Gamma_{h}^{\pm}} \eta(\Bx) u_{0}(s \Bx) v(\Bx) \mathrm{d} \sigma.
         \end{split}\label{lDe}
     \end{equation}
 \end{lemma}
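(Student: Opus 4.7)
The strategy is a two-step application of Green's second identity followed by passage to the limit $\varepsilon\to 0$. First I collect the regularity ingredients. By \cref{Lemma 2.1}, the CGO function $u_0(s\mathbf{x})$ is harmonic in $\mathbb{R}^2\setminus \mathbb{R}^2_{0,-}$, and since $-\pi<\theta_m<\theta_M<\pi$ the sector $W$ and hence $\overline{D_\varepsilon}$ is contained in this set, so $\Delta u_0(s\mathbf{x})=0$ pointwise on $D_\varepsilon$. From \cref{cor2.1} we have $|u_0(s\mathbf{x})|\le 1$, and a direct computation (together with \cref{cor2.2}) shows $u_0(s\mathbf{x})\in H^1(S_h)$ since $|\nabla u_0(s\mathbf{x})|=O(\sqrt{s/r})$ near the corner, which is $L^2$ in two dimensions. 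Subtracting the two Helmholtz-type equations in \eqref{1.1} gives $\Delta(v-w)=-k^2(v-w)+k^2 Vw\in L^2(S_h)$, so in particular $v-w\in H^1(S_h)$ has $L^2$ Laplacian. Finally $D_\varepsilon$ is a Lipschitz domain with boundary decomposition
\[
\partial D_\varepsilon=\Lambda_h\cup\bigl((\Gamma_h^+\cup\Gamma_h^-)\setminus\overline{B_\varepsilon}\bigr)\cup(S_h\cap\partial B_\varepsilon).
\]

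Next I would apply Green's second identity to $u_0(s\mathbf{x})$ and $v-w$ on $D_\varepsilon$. Using $\Delta u_0=0$ and splitting the boundary, this yields
\[
\int_{D_\varepsilon}\Delta(v-w)u_0(s\mathbf{x})\,d\mathbf{x}
=\int_{\Lambda_h}\!\bigl(u_0\partial_\nu(v-w)-(v-w)\partial_\nu u_0\bigr)d\sigma
+\int_{\Gamma_h^\pm\setminus B_\varepsilon}\!\!(\cdots)\,d\sigma+I(\varepsilon),
\]
where $I(\varepsilon)$ denotes the boundary contribution on the inner arc $S_h\cap\partial B_\varepsilon$. On $\Gamma_h^\pm\subset\partial\Omega$ the conductive transmission conditions $w=v$ and $\partial_\nu w=\partial_\nu v+\eta v$ give, in the trace sense, $(v-w)|_{\Gamma_h^\pm}=0$ and $\partial_\nu(v-w)|_{\Gamma_h^\pm}=-\eta v$, so the $\Gamma_h^\pm$ integral reduces to $-\int_{\Gamma_h^\pm\setminus B_\varepsilon}\eta\, v\, u_0\,d\sigma$.

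Now I would pass to the limit $\varepsilon\to 0^+$. The left-hand side converges to $\int_{S_h}\Delta(v-w)u_0\,d\mathbf{x}$ by dominated convergence, since $|u_0|\le 1$ and $\Delta(v-w)\in L^1(S_h)$. The $\Lambda_h$ integral is independent of $\varepsilon$. The integral over $\Gamma_h^\pm\setminus B_\varepsilon$ converges to the integral over the full $\Gamma_h^\pm$ by dominated convergence, the integrand being in $L^1(\Gamma_h^\pm)$ thanks to $\eta\in L^\infty$, $u_0\in L^\infty$ and $v\in H^{1/2}(\Gamma_h^\pm)\subset L^2(\Gamma_h^\pm)$.

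The sole remaining and main obstacle is to prove $I(\varepsilon)\to 0$. To handle this I would apply Green's second identity a second time on the inner Lipschitz sector $B_\varepsilon\cap W$, whose outward normal on $S_h\cap\partial B_\varepsilon$ is opposite to the outward normal of $D_\varepsilon$. Using $\Delta u_0=0$ and the same transmission identities on $\Gamma_\varepsilon^\pm=\Gamma^\pm\cap B_\varepsilon$ yields
\[
-I(\varepsilon)=\int_{B_\varepsilon\cap W}u_0\,\Delta(v-w)\,d\mathbf{x}+\int_{\Gamma_\varepsilon^\pm}\eta\,v\,u_0\,d\sigma.
\]
Both terms on the right vanish as $\varepsilon\to 0$: the volume term because $u_0\Delta(v-w)\in L^1(S_h)$ while $m(B_\varepsilon\cap W)\to 0$, and the boundary term because $\eta v u_0\in L^1(\Gamma_h^\pm)$ while $|\Gamma_\varepsilon^\pm|\to 0$. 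Combining all the limits produces exactly \eqref{lDe}. The delicate point, and the reason the lemma is stated as a limit rather than a direct identity, is precisely this bookkeeping on the inner arc where neither the trace of $\partial_\nu(v-w)$ nor of $\partial_\nu u_0$ admits a uniform-in-$\varepsilon$ $L^2$ bound; the secondary Green's identity on $B_\varepsilon\cap W$ bypasses the need for such pointwise estimates.
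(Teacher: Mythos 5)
Your proof is correct and follows the same skeleton as the paper's: Green's second identity on $D_\varepsilon$ with $\Delta u_0(s\Bx)=0$, substitution of the transmission conditions $v-w=0$ and $\partial_\nu(v-w)=-\eta v$ on $\Gamma^\pm_{(\varepsilon,h)}$, and passage to the limit $\varepsilon\to 0$. The one place where you genuinely diverge is the treatment of the inner arc $\Lambda_\varepsilon=S_h\cap\partial B_\varepsilon$: the paper simply asserts, citing Corollary \ref{cor2.1} and the trace theorem, that $\int_{\Lambda_\varepsilon}\bigl(u_0\partial_\nu(v-w)-(v-w)\partial_\nu u_0\bigr)\mathrm{d}\sigma\to 0$ and $\int_{\Gamma^\pm_{(0,\varepsilon)}}\eta u_0 v\,\mathrm{d}\sigma\to 0$, whereas you derive both limits at once by applying Green's identity a second time on the small sector $W\cap B_\varepsilon$ and invoking absolute continuity of the integrals of $u_0\,\Delta(v-w)\in L^1(S_h)$ and $\eta v u_0\in L^1(\Gamma_h^\pm)$. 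Your variant buys a cleaner justification: a direct estimate of the $\Lambda_\varepsilon$ term is awkward because $\partial_\nu(v-w)$ on the shrinking arc is only an $H^{-1/2}$ object with no uniform-in-$\varepsilon$ control, which is exactly the gap your second Green's identity sidesteps; the cost is that you must (and implicitly do) note that $u_0\in H^1(W\cap B_\varepsilon)$ with distributional Laplacian zero up to the vertex, so that the weak Green's identity applies there even though $u_0\notin H^2$. The rest of your limit bookkeeping (dominated convergence on the volume term and on $\Gamma_h^\pm\setminus B_\varepsilon$) matches what the paper does implicitly.
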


 \begin{proof}
     Using Green's formula and the boundary condition in (\ref{1.1}), we can deduce that
     \begin{equation}
        \begin{split}
            &\int_{D_{\varepsilon}}\Delta (v-w) u_{0}(s \Bx) \mathrm{d} \Bx\\
            =&\int_{D_{\varepsilon}}(v-w) \Delta u_{0}(s \Bx) \mathrm{d} \Bx+\int_{\partial D_{\varepsilon}}\left(u_{0}(s \Bx) \partial_{\nu}(v-w)-(v-w) \partial_{\nu} u_{0}(s \Bx)\right) \mathrm{d} \sigma \\
            =&\int_{\Lambda_{h}}\left(u_{0}(s \Bx) \partial_{\nu}(v-w)-(v-w) \partial_{\nu} u_{0}(s \Bx)\right) \mathrm{d} \sigma \\
            &+\int_{\Lambda_{\varepsilon}}\left(u_{0}(s \Bx) \partial_{\nu}(v-w)-(v-w) \partial_{\nu} u_{0}(s \Bx)\right) \mathrm{d} \sigma\\
            &-\int_{\Gamma_{(\varepsilon,h)}^{\pm}} \eta(\Bx) u_{0}(s \Bx) v(\Bx) \mathrm{d} \sigma ,
        \end{split}\label{De}
    \end{equation}
where $\Gamma_{(\varepsilon, h)}^{\pm}=\Gamma^{\pm} \cap\left(B_{h} \backslash B_{\varepsilon}\right)$ and $\Lambda_{\varepsilon}=S_{h}\cap \partial B_{\varepsilon}$. Using the trace theorem, we have  $v \in L^{2}\left(\Gamma_{(0, \varepsilon)}^{\pm}\right)$, where $\Gamma_{(0, \varepsilon)}^{\pm}=\Gamma^{\pm} \cap B_{\varepsilon}$. By Corollary \ref{cor2.1}, one can see that
\begin{equation*}
    \lim _{\varepsilon \rightarrow 0} \int_{\Lambda_{\varepsilon}}\left(u_{0} \partial_{\nu}(v-w)-(v-w) \partial_{\nu} u_{0}\right) \mathrm{d} \sigma=0,
\end{equation*}
and
\begin{equation*}
    \lim _{\varepsilon \rightarrow 0} \int_{\Gamma_{(0, \varepsilon)}^{\pm}} \eta u_{0} v \mathrm{d} \sigma=0.
\end{equation*}
Thus (\ref{lDe}) is obtained by taking the limit on both sides of the equation (\ref{De}).
 \end{proof}

\begin{lemma}\label{lemma2.6}
    Suppose $\eta \in C^{\alpha}\left(\bar{\Gamma}_{h}^{\pm}\right)$ for $0<\alpha<1$, $\theta_{M},\theta_{m}$ are defined in \eqref{2.1} and $\theta_{M}-\theta_{m}\neq \pi$. Define
    \begin{equation}
        \omega(\theta)=-\cos (\theta / 2+\pi), \quad \mu(\theta)=-\cos (\theta / 2+\pi)-\mathbf{i} \sin (\theta / 2+\pi),\label{mu}
    \end{equation}
    and
    \begin{equation}
        I_{2}^{\pm}=\int_{\Gamma_{h}^{\pm}} \eta(\Bx) u_{0}(s \Bx) v_{j}(\Bx) \mathrm{d} \sigma.
    \end{equation}
    Then it holds that
    \begin{equation}
        \begin{split}
            I_{2}^{-}=& 2 \eta(\mathbf{0}) v_{j}(\mathbf{0}) s^{-1}\left(\mu\left(\theta_{m}\right)^{-2}-\mu\left(\theta_{m}\right)^{-2} e^{-\sqrt{s h} \mu\left(\theta_{m}\right)}-\mu\left(\theta_{m}\right)^{-1} \sqrt{s h} e^{-\sqrt{s h} \mu\left(\theta_{m}\right)}\right)\\
            &+v_{j}(\mathbf{0})\eta(\mathbf{0})I_{21}^{-} + \eta(\mathbf{0})I_{22}^{-}+I_{\eta}^{-},
        \end{split}\label{I2-}
    \end{equation}
    and
    \begin{equation}
        \begin{split}
            I_{2}^{+}=& 2 \eta(\mathbf{0}) v_{j}(\mathbf{0}) s^{-1}\left(\mu\left(\theta_{M}\right)^{-2}-\mu\left(\theta_{M}\right)^{-2} e^{-\sqrt{s h} \mu\left(\theta_{M}\right)}-\mu\left(\theta_{M}\right)^{-1} \sqrt{s h} e^{-\sqrt{s h} \mu\left(\theta_{M}\right)}\right)\\
            &+v_{j}(\mathbf{0})\eta(\mathbf{0})I_{21}^{+} + \eta(\mathbf{0})I_{22}^{+}+I_{\eta}^{+},
        \end{split}\label{I2+}
    \end{equation}
    where
    \begin{equation*}
        \begin{split}
            I_{21}^{-} &\leq  \mathcal{O}(s^{-3}), \quad I_{21}^{+} \leq  \mathcal{O}(s^{-3}),\\
            I_{22}^{-} &\leq \mathcal{O}(\|g_{j}\|_{L^{2}(\mathbb{S}^{n-1})}s^{-2}),\quad
            I_{22}^{+} \leq \mathcal{O}(\|g_{j}\|_{L^{2}(\mathbb{S}^{n-1})}s^{-2}),\\
            |I_{\eta}^{-}| &\leq \|\eta\|_{C^{\alpha}}\left(v_{j}(\mathbf{0})\mathcal{O}(s^{-1-\alpha}) + \mathcal{O}(\|g_{j}\|_{L^{2}(\mathbb{S}^{n-1})}s^{-2-\alpha})\right),\\
            |I_{\eta}^{+}| &\leq \|\eta\|_{C^{\alpha}}\left(v_{j}(\mathbf{0})\mathcal{O}(s^{-1-\alpha}) + \mathcal{O}(\|g_{j}\|_{L^{2}(\mathbb{S}^{n-1})}s^{-2-\alpha})\right).
        \end{split}
    \end{equation*}
\end{lemma}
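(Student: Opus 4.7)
The plan is to reduce $I_2^{\pm}$ to a single explicit integral plus three controllable remainders via two ``peeling'' identities. First, Lemma~\ref{lemma2.4} gives $\eta(\mathbf{x})=\eta(\mathbf{0})+\delta\eta(\mathbf{x})$ with $|\delta\eta(\mathbf{x})|\leq\|\eta\|_{C^{\alpha}}|\mathbf{x}|^{\alpha}$. Second, the Jacobi--Anger expansion from Lemma~\ref{lemma2.3} allows us to split
\begin{equation*}
v_j(\mathbf{x}) \;=\; v_j(\mathbf{0})+v_j(\mathbf{0})\bigl(J_0(k|\mathbf{x}|)-1\bigr)+2\sum_{p\geq 1}\mathbf{i}^{p}J_p(k|\mathbf{x}|)\gamma_{pj}.
\end{equation*}
Substituting these two decompositions into $\eta(\mathbf{x})u_0(s\mathbf{x})v_j(\mathbf{x})$ produces four natural summands: the principal term $\eta(\mathbf{0})v_j(\mathbf{0})\int_{\Gamma_h^{\pm}}u_0(s\mathbf{x})\,\mathrm{d}\sigma$, together with three error contributions that I label $v_j(\mathbf{0})\eta(\mathbf{0})I_{21}^{\pm}$, $\eta(\mathbf{0})I_{22}^{\pm}$ and $I_{\eta}^{\pm}$ to match the statement.

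The principal term on $\Gamma_h^{-}$ can be evaluated in closed form. Parametrizing by $r\in(0,h)$ along the ray at angle $\theta_m$, formula \eqref{2.4} together with \eqref{mu} gives $u_0(s\mathbf{x})=\exp(-\sqrt{sr}\,\mu(\theta_m))$, so the principal integral becomes $\int_0^h e^{-\sqrt{sr}\,\mu(\theta_m)}\,\mathrm{d}r$. The change of variable $t=\sqrt{sr}\,\mu(\theta_m)$ transforms this into $\frac{2}{s\mu(\theta_m)^2}\int_0^{\sqrt{sh}\mu(\theta_m)}t\,e^{-t}\,\mathrm{d}t$, and the elementary identity $\int_0^A t\,e^{-t}\,\mathrm{d}t=1-(A+1)e^{-A}$ produces exactly the closed-form expression appearing in \eqref{I2-}. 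The computation on $\Gamma_h^{+}$ is identical with $\theta_m$ replaced by $\theta_M$, yielding \eqref{I2+}. The standing hypothesis $\theta_M-\theta_m\neq\pi$ together with $-\pi<\theta_m<\theta_M<\pi$ guarantees $\mathrm{Re}\,\mu(\theta_m),\mathrm{Re}\,\mu(\theta_M)>0$, and this positivity is what drives every decay estimate below.

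Each remainder is then estimated by pairing a small-$r$ bound on the slowly varying factor with the CGO decay $|u_0(s\mathbf{x})|\leq e^{-\sqrt{sr}\,\mathrm{Re}\,\mu}$. For $I_{21}^{\pm}$ the Taylor bound $|J_0(kr)-1|\leq Ck^2r^2$ reduces matters to $\int_0^h r^2 e^{-\sqrt{sr}\,\mathrm{Re}\,\mu}\,\mathrm{d}r=O(s^{-3})$ by the same $t=\sqrt{sr}\,\mathrm{Re}\,\mu$ substitution. For $I_{22}^{\pm}$ one uses the pointwise bound $|J_p(kr)|\leq (kr/2)^p/p!$ recorded in Lemma~\ref{lemma2.3} together with $|\gamma_{pj}|\leq\sqrt{2\pi}\|g_j\|_{L^2(\mathbb{S}^{n-1})}$ from Cauchy--Schwarz; the series $\sum_{p\geq 1}(kr/2)^p/p!=e^{kr/2}-1$ is $O(r)$ on the bounded range $r\in(0,h)$, so $\bigl|2\sum_{p\geq 1}\mathbf{i}^{p}J_p(kr)\gamma_{pj}\bigr|\leq C\|g_j\|_{L^2}r$, and integration against the CGO gives the asserted $O(\|g_j\|_{L^2}s^{-2})$. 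Finally, splitting $I_{\eta}^{\pm}=\int\delta\eta\,u_0\,v_j(\mathbf{0})\,\mathrm{d}\sigma+\int\delta\eta\,u_0(v_j-v_j(\mathbf{0}))\,\mathrm{d}\sigma$ and invoking $|\delta\eta|\leq\|\eta\|_{C^{\alpha}}r^{\alpha}$ with $|v_j(\mathbf{x})-v_j(\mathbf{0})|\leq\|v_j\|_{C^1}r\leq C(1+k)\|g_j\|_{L^2}r$ from Lemma~\ref{lemma2.2} yields the two-part bound $\|\eta\|_{C^{\alpha}}(|v_j(\mathbf{0})|O(s^{-1-\alpha})+\|g_j\|_{L^2}O(s^{-2-\alpha}))$.

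The main technical subtlety I anticipate lies in $I_{22}^{\pm}$: one must carefully control the Bessel series termwise and observe that the exponential series $\sum_{p\geq 1}(kr/2)^p/p!$ is dominated by $O(r)$ on the compact range $(0,h)$, so that the extra factor of $r$ can be absorbed against the CGO decay to produce the gain of an additional $s^{-1}$. Once this step and the closed-form evaluation of the principal integral are secured, the remainder is a careful bookkeeping exercise, and $I_2^{+}$ follows from $I_2^{-}$ by interchanging $\theta_m$ and $\theta_M$ throughout.
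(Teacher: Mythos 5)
Your proposal is correct and follows essentially the same route as the paper: peel off $\eta(\mathbf{0})$ via Lemma~\ref{lemma2.4}, split $v_j$ by the Jacobi--Anger expansion of Lemma~\ref{lemma2.3}, evaluate $\int_0^h e^{-\sqrt{sr}\,\mu}\,\mathrm{d}r$ in closed form, and estimate the remainders by the substitution $t=\sqrt{sr}$ against the decay $\mathrm{Re}\,\mu>0$. The only (harmless) deviations are cosmetic: for $I_{22}^{\pm}$ you sum the dominating series to $e^{kr/2}-1=O(r)$ where the paper keeps it termwise with $h^{p-1}$ factors, and for $I_{\eta}^{\pm}$ you use the $C^1$ bound on $v_j$ from Lemma~\ref{lemma2.2} where the paper re-expands via Jacobi--Anger; both give the stated orders.
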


\begin{proof}
    Using Lemma \ref{lemma2.4}, we have
    \begin{equation*}
        I_{2}^{-}=\eta(\mathbf{0}) \int_{\Gamma_{h}^{-}} u_{0}(s \Bx) v_{j}(\Bx) \mathrm{d} \sigma + \int_{\Gamma_{h}^{-}} \delta \eta(\Bx) u_{0}(s \Bx) v_{j}(\Bx) \mathrm{d} \sigma.
    \end{equation*}
  Note that $\omega(\theta)>0$ for $\theta_{m} \leq \theta \leq \theta_{M}$.
    By Lemma \ref{lemma2.3}, we can obtain that
    \begin{equation}
        \begin{split}
            \int_{\Gamma_{h}^{-}} u_{0}(s \Bx) v_{j}(\Bx) \mathrm{d} \sigma
            =&\int_{0}^{h}\left(v_{j}(\mathbf{0}) J_{0}(k r)+2 \sum_{p=1}^{\infty} \gamma_{p j} \mathbf{i}^{p} J_{p}(k r)\right) e^{-\sqrt{s r} \mu\left(\theta_{m}\right)} \mathrm{d} r \\
            =&v_{j}(\mathbf{0})\left[\int_{0}^{h} e^{-\sqrt{s r} \mu\left(\theta_{m}\right)} \mathrm{d} r+\sum_{p=1}^{\infty} \frac{(-1)^{p} k^{2 p}}{4^{p}(p !)^{2}} \int_{0}^{h} r^{2 p} e^{-\sqrt{s r} \mu\left(\theta_{m}\right)} \mathrm{d} r\right]\\
            &+\int_{0}^{h} 2 \sum_{p=1}^{\infty} \gamma_{p j} \mathbf{i}^{p} J_{p}(k r) e^{-\sqrt{s r} \mu\left(\theta_{m}\right)} \mathrm{d} r\\
            :=&v_{j}(\mathbf{0})\left[\int_{0}^{h} e^{-\sqrt{s r} \mu\left(\theta_{m}\right)} \mathrm{d} r + I_{21}^{-}\right] + I_{22}^{-}.
        \end{split}
    \end{equation}
    Furthermore, we have
\begin{equation}
    \int_{0}^{h} e^{-\sqrt{s r} \mu\left(\theta_{m}\right)} \mathrm{d} r = 2 s^{-1}\left(\mu\left(\theta_{m}\right)^{-2}-\mu\left(\theta_{m}\right)^{-2} e^{-\sqrt{s h} \mu\left(\theta_{m}\right)}-\mu\left(\theta_{m}\right)^{-1} \sqrt{s h} e^{-\sqrt{s h} \mu\left(\theta_{m}\right)}\right),
\end{equation}
and
\begin{equation}
    \left|I_{21}^{-} \right| \leq \sum_{p=1}^{\infty} \frac{k^{2 p} h^{2 p-2}}{4^{p}(p !)^{2}} \int_{0}^{h} r^{2} e^{-\sqrt{s r} \omega\left(\theta_{m}\right)} \mathrm{d} r=\mathcal{O}\left(s^{-3}\right).
\end{equation}
By using Lemma \ref{lemma2.3}, one can drive that
\begin{equation}
    \begin{split}
        \left|I_{22}^{-}\right|
        & \leq 2\left\|g_{j}\right\|_{L^{2}\left(\mathbb{S}^{n-1}\right)} \sum_{p=1}^{\infty} \left[\frac{k^{p}}{2^{p} p !} \int_{0}^{h} r^{p} e^{-\sqrt{s r} \omega\left(\theta_{m}\right)} \mathrm{d} r\right.
        \left.+\frac{(k h)^{p}}{2^{p}} \sum_{\ell=1}^{\infty} \frac{k^{2 \ell} h^{2(\ell-1)}}{4^{\ell}(\ell !)^{2}} \int_{0}^{h} r^{2} e^{-\sqrt{s r} \omega\left(\theta_{m}\right)} \mathrm{d} r\right] \\
        &\leq 2\left\|g_{j}\right\|_{L^{2}\left(\mathbb{S}^{n-1}\right)} \sum_{p=1}^{\infty}\left[\frac{k^{p} h^{p-1}}{2^{p} p !} \int_{0}^{h} r e^{-\sqrt{s r} \omega\left(\theta_{m}\right)} \mathrm{d} r+\mathcal{O}\left(s^{-3}\right)\right] \\
        &= \mathcal{O}\left(\left\|g_{j}\right\|_{L^{2}\left(\mathbb{S}^{n-1}\right)} s^{-2}\right),
    \end{split}
\end{equation}
where we assume that $kh<1$ for sufficiently small $h$.
Taking
\begin{equation}
    I_{\eta}^{-}=\int_{\Gamma_{h}^{-}} \delta \eta(\Bx) u_{0}(s \Bx) v_{j}(\Bx) \mathrm{d} \sigma \label{Ieta}.
\end{equation}
By Lemmas \ref{lemma2.3} and \ref{lemma2.4}, it holds that
\begin{equation}
    \begin{split}
        \left|I_{\eta}^{-}\right| \leq &\|\eta\|_{C^{\alpha}} \int_{0}^{h} r^{\alpha}\left(v_{j}(0) J_{0}(k r)+2 \sum_{p=1}^{\infty}\left|\gamma_{p j}\right| J_{p}(k r)\right) e^{-\sqrt{s r} \omega\left(\theta_{m}\right)} \mathrm{d} r\\
        = & \|\eta\|_{C^{\alpha}} v_{j}(0) (\int_{0}^{h} r^{\alpha} e^{-\sqrt{s r \omega}\left(\theta_{m}\right)} \mathrm{d} r + \sum_{p=1}^{\infty} \frac{(-1)^{p} k^{2 p}}{4^{p}(p !)^{2}} \int_{0}^{h} r^{\alpha+2 p} e^{-\sqrt{s r} \omega\left(\theta_{m}\right)} \mathrm{d} r)\\
        &+2 \|\eta\|_{C^{\alpha}} \sum_{p=1}^{\infty} \int_{0}^{h} r^{\alpha} \gamma_{p j} \mathbf{i}^{p} J_{p}(k r) e^{-\sqrt{s r} \mu\left(\theta_{m}\right)} \mathrm{d} r.
    \end{split}
\end{equation}
Since $\omega(\theta_{m})>0$, as $s\to \infty$, we have
\begin{equation}
    \int_{0}^{h} r^{\alpha} e^{-\sqrt{s r \omega}\left(\theta_{m}\right)} \mathrm{d} r = \mathcal{O}\left(s^{-1-{\alpha}}\right),
\end{equation}
and
\begin{equation}
    \left|\sum_{p=1}^{\infty} \frac{(-1)^{p} k^{2 p}}{4^{p}(p !)^{2}} \int_{0}^{h} r^{\alpha+2 p} e^{-\sqrt{s r} \omega\left(\theta_{m}\right)} \mathrm{d} r)\right| \leq \sum_{p=1}^{\infty} \frac{h^{2 p-2} k^{2 p}}{4^{p}(p !)^{2}} \int_{0}^{h} r^{\alpha+2} e^{-\sqrt{s r} \omega\left(\theta_{m}\right)} \mathrm{d} r=\mathcal{O}\left(s^{-3-\alpha}\right),
\end{equation}
for $kh<1$. Furthermore, by direct computations, one can obtain that
\begin{equation}
    \begin{split}
        &\left|\sum_{p=1}^{\infty} \int_{0}^{h} r^{\alpha} \gamma_{p j} \mathbf{i}^{p} J_{p}(k r) e^{-\sqrt{s r} \mu\left(\theta_{m}\right)} \mathrm{d} r\right|\\
        \leq & \left\|g_{j}\right\|_{L^{2}\left(\mathbb{S}^{n-1}\right)} \sum_{p=1}^{\infty} \left[\frac{k^{p}}{2^{p} p !} \int_{0}^{h} r^{p+\alpha} e^{-\sqrt{s r} \omega\left(\theta_{m}\right)} \mathrm{d} r\right.\\
        &\left.+\frac{k^{p}}{2^{p}} \sum_{\ell=1}^{\infty} \frac{k^{2 \ell} h^{2(\ell-1)}}{4^{\ell}(\ell !)^{2}}\left(\int_{0}^{h} r^{p+\alpha+2} e^{-\sqrt{s r} \omega\left(\theta_{m}\right)} \mathrm{d} r\right)\right] \\
        \leq & \left\|g_{j}\right\|_{L^{2}\left(\mathbb{S}^{n-1}\right)} \sum_{p=1}^{\infty} \left[\frac{k^{p}}{2^{p} p !} \int_{0}^{h} r^{p+\alpha} e^{-\sqrt{s r} \omega\left(\theta_{m}\right)} \mathrm{d} r\right.\\
        &\left.\left.+\frac{(k h)^{p}}{2^{p}} \sum_{\ell=1}^{\infty} \frac{k^{2 \ell} h^{2(\ell-1)}}{4^{\ell}(\ell !)^{2}}\left(\int_{0}^{h} r^{\alpha+2} e^{-\sqrt{s r} \omega\left(\theta_{m}\right)} \mathrm{d} r\right)\right]\right] \\
        \leq & \left\|g_{j}\right\|_{L^{2}\left(\mathbb{S}^{n-1}\right)} \sum_{p=1}^{\infty}\left[\frac{k^{p} h^{p-1}}{2^{p} p !} \int_{0}^{h} r^{\alpha+1} e^{-\sqrt{s r} \omega\left(\theta_{m}\right)} \mathrm{d} r+\mathcal{O}\left(s^{-\alpha-3}\right)\right]\\
        = & \mathcal{O}\left(\left\|g_{j}\right\|_{L^{2}\left(\mathbb{S}^{n-1}\right)} s^{-\alpha-2}\right).\label{Ieta3}
    \end{split}
\end{equation}
Combining (\ref{Ieta})-(\ref{Ieta3}), one finally obtains
\begin{equation}
    |I_{\eta}^{-}|\leq \left\| \eta \right\|_{C^{\alpha}}\left(v_j(\mathbf{0}) \mathcal{O}\left(s^{-1-\alpha}\right) + \mathcal{O}\left(\left\|g_{j}\right\|_{L^{2}\left(\mathbb{S}^{n-1}\right)} s^{-\alpha-2}\right)\right).\label{Ietan}
\end{equation}
Similarly, one can prove (\ref{I2+}), which completes the proof.
\end{proof}

\begin{lemma}\label{lemma2.7}
Suppose that $\eta \in C^{\alpha}\left(\bar{\Gamma}_{h}^{\pm}\right)$ for $0<\alpha<1$, $\Gamma ^{\pm}_{h}$ is defined in (\ref{2.2}), $\theta_{M},\theta_{m}$ are defined in (\ref{2.1}) and $\theta_{M}-\theta_{m}\neq \pi$. Define
    \begin{equation}
        \xi_{j}^{\pm}(s)=\int_{\Gamma_{h}^{\pm}} \eta(\Bx) u_{0}(s \Bx)\left(v(\Bx)-v_{j}(\Bx)\right) \mathrm{d} \sigma.
    \end{equation}
Then the following estimate holds,
    \begin{equation}
    \begin{split}
        \left|\xi_{j}^{\pm}(s)\right| \leq & C\Big(|\eta(0)| \frac{\sqrt{\theta_{M}-\theta_{m}} e^{-\sqrt{s \Theta \delta_{W}} h}}{\sqrt{2}}\\
        &+\|\eta\|_{C^{\alpha}}s^{-(\alpha+1)} \frac{\sqrt{2\left(\theta_{M}-\theta_{m}\right) \Gamma(4 \alpha+4)}}{\left(2 \delta_{W}\right)^{2 \alpha+2}}\Big) \left\|v-v_{j}\right\|_{H^{1}\left(S_{h}\right)},\label{xi}
        \end{split}
    \end{equation}
    where $\delta_{w}$ is defined in (\ref{2.5}).
\end{lemma}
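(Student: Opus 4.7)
The plan is to decompose the H\"older weight $\eta$ about the corner, split $\xi_j^\pm(s)$ accordingly, then estimate each summand by Cauchy--Schwarz, a standard trace inequality, and explicit line-integral bounds for the CGO weight $u_0(s\cdot)$.

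First I would invoke Lemma~\ref{lemma2.4} to write $\eta(\mathbf{x}) = \eta(\mathbf{0}) + \delta\eta(\mathbf{x})$ with $|\delta\eta(\mathbf{x})| \leq \|\eta\|_{C^\alpha}|\mathbf{x}|^\alpha$, which gives
\begin{equation*}
\xi_j^\pm(s) = \eta(\mathbf{0}) \int_{\Gamma_h^\pm} u_0(s\mathbf{x})(v-v_j)\,\mathrm{d}\sigma + \int_{\Gamma_h^\pm} \delta\eta(\mathbf{x})\,u_0(s\mathbf{x})(v-v_j)\,\mathrm{d}\sigma.
\end{equation*}
Applying Cauchy--Schwarz on $\Gamma_h^\pm$ to each of the two integrals separately yields
\begin{equation*}
|\xi_j^\pm(s)| \leq \bigl(|\eta(\mathbf{0})|\,\|u_0(s\cdot)\|_{L^2(\Gamma_h^\pm)} + \|\eta\|_{C^\alpha}\,\||\mathbf{x}|^\alpha u_0(s\cdot)\|_{L^2(\Gamma_h^\pm)}\bigr)\,\|v-v_j\|_{L^2(\Gamma_h^\pm)}.
\end{equation*}

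Next, since $S_h$ is a Lipschitz domain and $v,v_j \in H^1(S_h)$, the standard trace theorem furnishes $\|v-v_j\|_{L^2(\Gamma_h^\pm)} \leq C\,\|v-v_j\|_{H^1(S_h)}$, where the trace constant can be absorbed into the final constant $C$ in the statement. This is the only step that converts a boundary norm into the bulk $H^1$ norm required by the conclusion.

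Finally, I would bound the two remaining $L^2$--norms of the CGO weight by direct integration along the ray. Parametrizing $\Gamma_h^\pm$ by $r \in (0,h)$, the pointwise bound $|u_0(s\mathbf{x})| \leq e^{-\sqrt{sr}\,\delta_W}$ (already used in Corollary~\ref{cor2.2}) handles the unweighted norm by the integral mean value theorem, giving the exponentially small factor of the first summand in \eqref{xi}. For the weighted norm, $\||\mathbf{x}|^\alpha u_0(s\cdot)\|_{L^2(\Gamma_h^\pm)}^2 = \int_0^h r^{2\alpha} e^{-2\sqrt{sr}\delta_W}\,\mathrm{d}r$, the substitution $t = 2\delta_W\sqrt{sr}$ reduces the integral to an incomplete Gamma integral that is majorized by $\Gamma(4\alpha+4)$, producing the $s^{-(\alpha+1)}$ decay together with the explicit constant $\sqrt{2(\theta_M-\theta_m)\,\Gamma(4\alpha+4)}/(2\delta_W)^{2\alpha+2}$. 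Collecting the two contributions and the trace constant yields \eqref{xi}. The main effort here is not an analytical obstruction but rather careful bookkeeping that mirrors the final display in the proof of Corollary~\ref{cor2.2}.
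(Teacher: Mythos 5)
Your overall architecture (split $\eta=\eta(\mathbf 0)+\delta\eta$ via Lemma~\ref{lemma2.4}, Cauchy--Schwarz, trace theorem for $v-v_j$, explicit bounds on the CGO weight) is reasonable, and it is genuinely more elementary than the paper's route. However, the final step contains a concrete error that prevents you from reaching the stated estimate \eqref{xi}. You bound the weighted norm by the \emph{line} integral
\begin{equation*}
\left\||\Bx|^{\alpha}u_{0}(s\,\cdot)\right\|_{L^{2}(\Gamma_{h}^{\pm})}^{2}=\int_{0}^{h}r^{2\alpha}e^{-2\delta_{W}\sqrt{sr}}\,\mathrm{d}r,
\end{equation*}
and claim it yields $s^{-(2\alpha+2)}$ with the factor $\Gamma(4\alpha+4)$. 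Under the substitution $t=2\delta_{W}\sqrt{sr}$ one has $\mathrm{d}r=t\,\mathrm{d}t/(2\delta_{W}^{2}s)$, so this integral is bounded by $\frac{2}{(4\delta_{W}^{2})^{2\alpha+1}}\,s^{-(2\alpha+1)}\Gamma(4\alpha+2)$; after taking the square root you only get $s^{-(\alpha+1/2)}$, not the claimed $s^{-(\alpha+1)}$. The exponent $s^{-(2\alpha+2)}$ and the constant $\sqrt{2(\theta_M-\theta_m)\Gamma(4\alpha+4)}/(2\delta_W)^{2\alpha+2}$ that appear in \eqref{xi} come from the \emph{area} integral $\int_{S_h}|\Bx|^{2\alpha}|u_0(s\Bx)|^2\,\mathrm{d}\Bx=\int_0^h\!\!\int_{\theta_m}^{\theta_M}r^{2\alpha}e^{2\sqrt{sr}\cos(\theta/2+\pi)}\,r\,\mathrm{d}\theta\,\mathrm{d}r$ of Corollary~\ref{cor2.2}, whose extra Jacobian factor $r$ raises the power $t^{4\alpha+1}$ to $t^{4\alpha+3}$. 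You have transplanted the two-dimensional bound onto a one-dimensional integral.

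The paper sidesteps this by pairing differently: after the same splitting of $\eta$, it estimates each boundary integral by the duality $\|v-v_j\|_{H^{1/2}(\Gamma_h^\pm)}\|u_0\|_{H^{-1/2}(\Gamma_h^\pm)}$ (respectively with $|\Bx|^\alpha u_0$), then uses the trace theorem to dominate these by $\|v-v_j\|_{H^{1}(S_h)}\|u_0\|_{L^{2}(S_h)}$ and $\|v-v_j\|_{H^{1}(S_h)}\||\Bx|^\alpha u_0\|_{L^{2}(S_h)}$, so that Corollary~\ref{cor2.2} applies verbatim and delivers the $s^{-(\alpha+1)}$ rate. Your weaker $s^{-(\alpha+1/2)}$ bound would in fact still be sufficient for the way Lemma~\ref{lemma2.7} is used in the proof of Theorem~\ref{Theorem 2.1} (one only needs $s\,\xi_j^{\pm}(s)\to 0$ along $s=j^{\beta}$ with $\beta<\Upsilon$), but as written your argument does not prove the lemma as stated; you would need either to restate the lemma with the weaker rate or to switch to the domain $L^2$ norms as the paper does.
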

\begin{proof}
    By Lemma \ref{lemma2.4}, the trace theorem and Cauchy-Schwarz inequality, one has
    \begin{equation*}
        \begin{split}
            \left|\xi_{j}^{\pm}(s)\right| & \leq |\eta(\mathbf{0})| \int_{\Gamma_{h}^{\pm}}\left|u_{0}(s \mathbf{x})\right| \left|v(\mathbf{x})-v_{j}(\mathbf{x})\right| \mathrm{d} \sigma+\|\eta\|_{C^{\alpha}} \int_{\Gamma_{h}^{\pm}} \left| \mathbf{x}\right|^{\alpha}\left|u_{0}(s \mathbf{x})\right| \left| v(\mathbf{x})-v_{j}(\mathbf{x})\right| \mathrm{d} \sigma\\
            & \leq|\eta(\mathbf{0})|\left\|v-v_{j}\right\|_{H^{1 / 2}\left(\Gamma_{h}^{\pm}\right)}\left\|u_{0}(s \mathbf{x})\right\|_{H^{-1 / 2}\left(\Gamma_{h}^{\pm}\right)} \\
            &+\|\eta\|_{C^{\alpha}}\left\|v-v_{j}\right\|_{H^{1 / 2}\left(\Gamma_{h}^{\pm}\right)}\left\|\left|\mathbf{x}\right|^{\alpha} u_{0}(s \mathbf{x})\right\|_{H^{-1 / 2}\left(\Gamma_{h}^{\pm}\right)} \\
            & \leq|\eta(\mathbf{0})|\left\|v-v_{j}\right\|_{H^{1}\left(S_{h}\right)}\left\|u_{0}(s \mathbf{x})\right\|_{L^{2}\left(S_{h}\right)}+\|\eta\|_{C^{\alpha}}\left\|v-v_{j}\right\|_{H^{1}\left(S_{h}\right)}\left\|\left.|\mathbf{x}\right|^{\alpha} u_{0}(s \mathbf{x})\right\|_{L^{2}\left(S_{h}\right)}.
            \end{split}
    \end{equation*}
    Combining with Corollary \ref{cor2.2}, one readily obtains (\ref{xi}).

    The proof is complete.
\end{proof}

We are in a position to present our first main result on the vanishing properties of the conductive transmission eigenfunctions $(v,w)$ in two dimensions.

\begin{thm}\label{Theorem 2.1}
    Let $v \in H^{1}(\Omega)$ and $w \in H^{1}(\Omega)$ be a pair of eigenfunctions to \eqref{1.1} associated with $k \in \mathbb{R}_{+} .$ Assume that the domain $\Omega \subset \mathbb{R}^{2}$ contains a corner $\Omega \cap W,$ where $\Bx_{c}$ is the vertex of $\Omega \cap W$ and $W$ is a sector defined in \eqref{2.1}. Moreover, there exits a sufficiently small neighbourhood $S_{h}$ (i.e. $h>0$ is sufficiently small) of $\Bx_{c}$ in $\Omega,$ where $S_{h}$ is defined in \eqref{2.2}, such that $q w \in C^{\alpha}\left(\bar{S}_{h}\right)$ with $q:=1+V$ and $\eta \in C^{\alpha}\left(\bar{\Gamma}_{h}^{\pm}\right)$ for $0<\alpha<1,$. If the following conditions are fulfilled:
\begin{enumerate}
\item[(a)] the transmission eigenfunction $v$ can be approximated in $H^{1}\left(S_{h}\right)$ by the Herglotz functions $v_{j}, j=1,2, \ldots,$ with kernels $g_{j}$ satisfying
\begin{equation}
    \left\|v-v_{j}\right\|_{H^{1}\left(S_{h}\right)} \leq j^{-\Upsilon}, \quad\left\|g_{j}\right\|_{L^{2}\left(\mathbb{S}^{n-1}\right)} \leq C j^{\varrho}.\label{2.8}
\end{equation}
for some constants $C,\varrho \text{ and }\Upsilon$ with $C>0, \Upsilon>0 \text { and } \varrho<\Upsilon$.
\item[(b)] the function $\eta(\Bx)$ doest not vanish at the corner, i.e.,
\begin{equation}
    \eta\left(\Bx_{c}\right) \neq 0. \label{2.9}
\end{equation}
\item[(c)] the angles $\theta_{m}$ and $\theta_{M}$ of the sector $W$ satisfy
\begin{equation}
    -\pi<\theta_{m}<\theta_{M}<\pi\ \text { and }\ \theta_{M}-\theta_{m} \neq \pi. \label{2.10}
\end{equation}
\end{enumerate}
Then it holds the following vanishing property:
\begin{equation}
    \lim _{\rho \rightarrow+0} \frac{1}{m\left(B\left(\Bx_{c}, \rho\right)\cap \Omega\right)} \int_{B\left(\Bx_{c}, \rho\right)\cap \Omega}|v(\Bx)| \mathrm{d} \Bx=0. \label{2.11}
\end{equation}
\end{thm}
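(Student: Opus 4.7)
The plan is to adapt the CGO-and-expansion strategy to the conductive setting. Place the corner at the origin ($\mathbf{x}_c = \mathbf{0}$) and set $q = 1 + V$. The idea is to test the combined Helmholtz identity $\Delta(v - w) = k^2(qw - v)$ in $S_h$ against the planar CGO wave $u_0(s\mathbf{x})$ from Lemma \ref{Lemma 2.1}, extract the leading asymptotics in $s$ and in the Herglotz index $j$, and finally pass to a limit along $s = s_j \to \infty$ to force $v_j(\mathbf{0}) \to 0$. Applying Lemma \ref{lemma2.5} converts the bulk integral into boundary terms,
\begin{equation*}
k^2 \int_{S_h} (qw - v)(\mathbf{x})\, u_0(s\mathbf{x})\, d\mathbf{x} = I_1(s) - I_2^+(s) - I_2^-(s),
\end{equation*}
where $I_1 = \int_{\Lambda_h}\bigl(u_0 \partial_\nu(v - w) - (v - w)\partial_\nu u_0\bigr)\, d\sigma$ and $I_2^\pm = \int_{\Gamma_h^\pm}\eta\, u_0\, v\, d\sigma$.

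The second move is to decompose every piece. Write $qw = (qw)(\mathbf{0}) + \delta(qw)$ via Lemma \ref{lemma2.4} and $v = v_j + (v - v_j)$ with $v_j(\mathbf{x}) = v_j(\mathbf{0}) J_0(k|\mathbf{x}|) + R_j(\mathbf{x})$ via the Jacobi--Anger expansion of Lemma \ref{lemma2.3}. The dominant bulk contribution is
\begin{equation*}
k^2\bigl((qw)(\mathbf{0}) - v_j(\mathbf{0})\bigr)\int_{W} u_0(s\mathbf{x})\, d\mathbf{x},
\end{equation*}
which by \eqref{2.6} is a $\mathcal{O}(s^{-2})$ quantity modulo the exponentially small tail from \eqref{2.7}; the subleading pieces ($\delta(qw)$, the higher Bessel modes $R_j$, and $v - v_j$) are bounded via Corollary \ref{cor2.2} by $\mathcal{O}(s^{-2-\alpha})$, $\mathcal{O}(\|g_j\|_{L^2}\, s^{-3})$, and $\mathcal{O}(j^{-\Upsilon} e^{-c\sqrt{s}})$ respectively. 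The arc term $I_1$ is $\mathcal{O}(\sqrt{s}\, e^{-\delta_W\sqrt{sh}})$ by the same Corollary combined with the trace theorem. For each lateral term, split $v = v_j + (v - v_j)$: the $v_j$-part is exactly Lemma \ref{lemma2.6}, producing the leading $2\eta(\mathbf{0}) v_j(\mathbf{0})\, s^{-1}\, \mu(\theta_\bullet)^{-2}$ plus explicit remainders in $s^{-3}$, $\|g_j\|_{L^2}\, s^{-2}$, $v_j(\mathbf{0})\, s^{-1-\alpha}$, and $\|g_j\|_{L^2}\, s^{-2-\alpha}$; the $(v - v_j)$-part is controlled by Lemma \ref{lemma2.7}.

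Now multiply the whole identity by $s$ and balance. The bulk LHS collapses to $\mathcal{O}(s^{-1})$ and vanishes, while the dominant RHS becomes the $s$-independent quantity $-2\eta(\mathbf{0})\bigl(\mu(\theta_m)^{-2} + \mu(\theta_M)^{-2}\bigr)\, v_j(\mathbf{0})$. A direct calculation gives $\mu(\theta)^{-2} = e^{-\mathbf{i}\theta}$, so the bracket $e^{-\mathbf{i}\theta_m} + e^{-\mathbf{i}\theta_M}$ is nonzero precisely when $\theta_M - \theta_m \neq \pi$ (hypothesis (c)); together with $\eta(\mathbf{0}) \neq 0$ (hypothesis (b)), the coefficient of $v_j(\mathbf{0})$ is bounded away from zero. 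Choosing $s_j = j^\tau$ with $\tau$ in a suitable window --- the spacing $\varrho < \Upsilon$ is exactly what keeps this window nonempty --- every remaining error term of the form $\|g_j\|_{L^2}\, s^{-\gamma}$ or $\|v - v_j\|_{H^1}\cdot\mathrm{poly}(s)$ tends to zero as $j \to \infty$. Solving the resulting asymptotic identity for $v_j(\mathbf{0})$ then yields $v_j(\mathbf{0}) \to 0$.

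The final step converts $v_j(\mathbf{0}) \to 0$ into the averaged vanishing \eqref{2.11}. For any $\epsilon > 0$, pick $j$ large so that $|v_j(\mathbf{0})| < \epsilon$; since $v_j$ is entire, there exists $\rho_j > 0$ with $\sup_{B_{\rho_j}(\mathbf{0})} |v_j| < 2\epsilon$. Splitting $|v| \leq |v_j| + |v - v_j|$ inside the ball average and combining with $\|v - v_j\|_{H^1(S_h)} \leq j^{-\Upsilon}$, a diagonal argument in $(j, \rho)$ delivers \eqref{2.11}. The technical heart of the proof is the third paragraph: one must simultaneously neutralize errors that forbid $s_j$ from being too small (the $\|g_j\|_{L^2}$ factors) and from being too large (the $\|v - v_j\|_{H^1}$ factors), and the hypothesis $\varrho < \Upsilon$ is precisely what makes a single polynomial choice $s_j = j^\tau$ viable. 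This balancing, together with the algebraic nondegeneracy $e^{-\mathbf{i}\theta_m} + e^{-\mathbf{i}\theta_M} \neq 0$, is also the source of the sharpening over \cite{ref1}.
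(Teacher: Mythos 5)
Your proposal is correct and follows essentially the same route as the paper's proof: testing $\Delta(v-w)=k^2(qw-v)$ against the CGO solution $u_0(s\Bx)$, converting to boundary terms via Lemma \ref{lemma2.5}, extracting the leading lateral contribution $2\eta(\mathbf{0})v_j(\mathbf{0})s^{-1}\bigl(\mu(\theta_m)^{-2}+\mu(\theta_M)^{-2}\bigr)$ from Lemma \ref{lemma2.6}, multiplying by $s$, and taking $s=j^{\beta}$ with $\max\{\varrho,0\}<\beta<\Upsilon$ to force $v_j(\mathbf{0})\to 0$. The only deviations are cosmetic (you estimate the bulk $v_j$-remainder via Jacobi--Anger rather than the $C^{\alpha}$ bound of Lemma \ref{lemma2.2}, and you note the simplification $\mu(\theta)^{-2}=e^{-\mathbf{i}\theta}$), and your nondegeneracy and window-balancing arguments coincide with the paper's \eqref{2.49}--\eqref{2.50}.
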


\begin{proof}[Proof of Theorem \ref{Theorem 2.1}] It is known that $\Delta+k^{2}$ is invariant under rigid motions, we assume without loss of generality that $\Bx_{c}$ is the origin. From (\ref{1.1}), define
\begin{equation}
    \Delta v=-k^{2} v:=f_{1}, \quad \Delta w=-k^{2} q w:=f_{2}.\label{2.12}
\end{equation}
which in combination with the boundary conditions in (\ref{1.1}), yields that
\begin{equation*}
    \Delta(v-w)=f_{1}-f_{2} \text { in } S_{h}, \quad v-w=0, \quad \partial_{\nu}(v-w)=-\eta v \text { on } \Gamma_{h}^{\pm}.
\end{equation*}
For national simplicity, we define $f_{1j}(\Bx)=-k^2v_j$. Consider the following integral:
\begin{equation}
    \begin{split}
        \int_{S_h} \Delta (v-w)u_{0}(s\Bx)\mathrm{d} \Bx&=\int_{S_h} (k^{2}qw-k^{2}v)u_{0}(s\Bx)\mathrm{d} \Bx\\
        &=\int_{S_h} u_{0}(s\Bx)(f_{1j}-f_2)\mathrm{d} \Bx-k^2\int_{S_h}(v-v_j)u_{0}(s\Bx)\mathrm{d} \Bx\\
        &:=I_1+\Delta_{j}(s).
    \end{split}\label{Sh}
\end{equation}
By Corollary \ref{cor2.1}, one has that $u_0\notin H^2$ near the origin. Consider the domain $D_{\varepsilon}=S_h\backslash B_{\varepsilon}$ for $0<\varepsilon<h$. Using the fact that
\begin{equation}
    \int_{S_h} \Delta (v-w)u_{0}(s\Bx)\mathrm{d} \Bx=\lim _{\varepsilon \rightarrow 0} \int_{D_{\varepsilon}}\Delta (v-w) u_{0}(s \Bx) \mathrm{d} \Bx,
\end{equation}
and by Lemma \ref{lemma2.5}, one can drive that
\begin{equation}
    I_{1}+\Delta_{j}(s)=I_{3}-I_{2}^{\pm}-\xi_{j}^{\pm}(s),\label{Z}
\end{equation}
where $I_1$ and $\Delta_{j}(s)$ are defined in (\ref{Sh}), and
\begin{equation}
    \begin{split}
        I_{2}^{\pm}=&\int_{\Gamma_{h}^{\pm}} \eta(\Bx) u_{0}(s \Bx) v_{j}(\Bx) \mathrm{d} \sigma, \\
        I_{3}=&\int_{\Lambda_{h}}\left(u_{0}(s \Bx) \partial_{\nu}(v-w)-(v-w) \partial_{\nu} u_{0}(s \Bx)\right) \mathrm{d} \sigma,\\
        \xi_{j}^{\pm}(s)=&\int_{\Gamma_{h}^{\pm}} \eta(\Bx) u_{0}(s \Bx)\left(v(\Bx)-v_{j}(\Bx)\right) \mathrm{d} \sigma.\label{I23}
    \end{split}
\end{equation}
Recalling that $I_1$ is defined in (\ref{Sh}), by Lemma \ref{lemma2.4} and the compact embedding, one can deduce that
\begin{equation}
    I_{1}=\left({f}_{1 j}(0)-f_{2}(0)\right) \int_{S_{h}} u_{0}(s \Bx) \mathrm{d} \Bx+\int_{S_{h}} \delta f_{1 j}(\Bx) u_{0}(s \Bx) \mathrm{d} \Bx-\int_{S_{h}} \delta f_{2}(\Bx) u_{0}(s \Bx) \mathrm{d} \Bx.
    \label{I1}
\end{equation}
where $\delta f_{1j}(\Bx)$ and $\delta f_{2}(\Bx)$ are deduced by Lemma \ref{lemma2.4}.
Using the fact that
\begin{equation*}
    \int_{S_{h}} u_{0}(s \Bx) \mathrm{d} \Bx=\int_{W} u_{0}(s \Bx) \mathrm{d} \Bx-\int_{W \backslash S_{h}} u_{0}(s \Bx) \mathrm{d} \Bx,
\end{equation*}
and combining (\ref{2.6}) and (\ref{2.7}) in the Lemma \ref{Lemma 2.1}, it can be derived that
\begin{equation}
    \left|\int_{S_{h}} u_{0}\left(s \Bx\right) \mathrm{d} \Bx\right| \leq 6\left|e^{-2 \theta_{M} \mathbf{i}}-e^{-2 \theta_{m} \mathbf{i}}\right| s^{-2}+\frac{6\left(\theta_{M}-\theta_{m}\right)}{\delta_{W}^{4}} s^{-2} e^{-\frac{\delta_{W} \sqrt{h s}}{2}}.\label{U0}
\end{equation}
From Lemma \ref{Lemma 2.1} , Corollary \ref{cor2.2} , Lemmas \ref{lemma2.4} and \ref{lemma2.2}, one can deduce that
\begin{equation}
    \begin{split}
        \left|\int_{S_{h}} \delta {f}_{1 j}(\Bx) u_{0}(s \Bx) \mathrm{d} \Bx\right|  \leq &\int_{S_{h}}\left|\delta {f}_{1 j}(\Bx)\right|\left|u_{0}(s \Bx)\right| \mathrm{d} \Bx \\
        \leq &\left\|{f}_{1 j}\right\|_{C^{\alpha}} \int_{W}\left|u_{0}(s \Bx) \| \Bx\right|^{\alpha} \mathrm{d} \Bx \\
        \leq & \frac{2\left(\theta_{M}-\theta_{m}\right) \Gamma(2 \alpha+4)}{\delta_{W}^{2 \alpha+4}}\left\|{f}_{1 j}\right\|_{C^{\alpha}} s^{-\alpha-2}\\
        \leq & \frac{2 \sqrt{2 \pi}\left(\theta_{M}-\theta_{m}\right) \Gamma(2 \alpha+4)}{\delta_{W}^{2 \alpha+4}} k^{2} \operatorname{diam}\left(S_{h}\right)^{1-\alpha} \\
        &(1+k)Cj^{\varrho} s^{-\alpha-2},
    \end{split}\label{df1j}
\end{equation}
where $\delta f_{1j}(\Bx)$ is deduced by Lemma \ref{lemma2.4}.
Using the assumption (\ref{2.8}), we further otain that
\begin{equation}
        \left|\int_{S_{h}} \delta {f}_{1 j}(\Bx) u_{0}(s \Bx) \mathrm{d} \Bx\right|\leq  C' j^{\varrho} s^{-\alpha-2},\label{f1j}
\end{equation}
where $C'=(1+k)C \frac{2 \sqrt{2 \pi}\left(\theta_{M}-\theta_{m}\right) \Gamma(2 \alpha+4)}{\delta_{W}^{2 \alpha+4}} k^{2} \operatorname{diam}\left(S_{h}\right)^{1-\alpha} $ and C is defined in (\ref{2.8}).
Similarly, we have
\begin{equation}
    \left|\int_{S_{h}} \delta f_{2}(\Bx) u_{0}(s \Bx) \mathrm{d} \Bx\right| \leq \frac{2\left(\theta_{M}-\theta_{m}\right) \Gamma(2 \alpha+4)}{\delta_{W}^{2 \alpha+4}}\left\|f_{2}\right\|_{C^{\alpha}} s^{-\alpha-2}.\label{f2}
\end{equation}

As for the $\Delta _{j}(s)$ defined in (\ref{Sh}), using the Cauthy-Schwarz inequality, Corollary \ref{cor2.2} and the assumption (\ref{2.8}), we can drive that
\begin{equation}
    \begin{split}
        \left|\Delta_{j}(s)\right| &\leq k^{2}\left\|v-v_{j}\right\|_{L^{2}\left(S_{h}\right)}\left\|u_{0}(s \Bx)\right\|_{L^{2}\left(S_{h}\right)}\\
        & \leq \frac{\sqrt{\theta_{M}-\theta_{m}} k^{2} e^{-\sqrt{s \Theta} \delta_{w}} h}{\sqrt{2}} j^{-\Upsilon}.\label{D}
    \end{split}
\end{equation}

By using the H$\ddot{\mbox{o}}$lder inequality, Corollary \ref{cor2.2}, and the trace theorem, one can prove that
\begin{equation}
    \begin{split}
        \left|I_{3}\right| & \leq\left\|u_{0}(s \Bx)\right\|_{H^{1/2}\left(\Lambda_{h}\right)}\left\|\partial_{\nu}(v-w)\right\|_{H^{-1/2}\left(\Lambda_{h}\right)}+\left\|\partial_{\nu} u_{0}(s \Bx)\right\|_{L^{2}\left(\Lambda_{h}\right)}\|v-w\|_{L^{2}\left(\Lambda_{h}\right)} \\
        & \leq\left(\left\|u_{0}(s \Bx)\right\|_{H^1\left(\Lambda_{h}\right)}+\left\|\partial_{\nu} u_{0}(s \Bx)\right\|_{L^{2}\left(\Lambda_{h}\right)}\right)\|v-w\|_{H^{1}\left(\Sigma_{\Lambda_{h}}\right)} \leq C e^{-c^{\prime} \sqrt{s}},\label{I3}
    \end{split}
\end{equation}
where $c^{\prime}>0$ as $s \rightarrow \infty$.

By Lemma \ref{lemma2.6} and (\ref{I1}), multiplying s on the both sides of (\ref{Z}), we can deduce that
\begin{equation}
    \begin{split}
        2 v_{j}(\mathbf{0}) \eta(\mathbf{0})&\left[  \left(\mu\left(\theta_{M}\right)^{-2}-\mu\left(\theta_{M}\right)^{-2} e^{-\sqrt{s h}\left(\theta_{M}\right)}-\mu\left(\theta_{M}\right)^{-1} \sqrt{s h} e^{-\sqrt{s h} \mu\left(\theta_{M}\right)}\right)\right.\\
        &\left.+\left(\mu\left(\theta_{m}\right)^{-2}-\mu\left(\theta_{m}\right)^{-2} e^{-\sqrt{s h} \mu\left(\theta_{m}\right)}-\mu\left(\theta_{m}\right)^{-1} \sqrt{s h} e^{-\sqrt{s h} \mu\left(\theta_{m}\right)}\right)\right] \\
        =&s\left[I_{3}-\left({f}_{1 j}(\mathbf{0})-f_{2}(\mathbf{0})\right) \int_{S_{h}} u_{0}(s \Bx) \mathrm{d} \Bx-\Delta_{j}(s)\right.\\
        &\left.-v_{j}(\mathbf{0}) \eta(\mathbf{0})\left(I_{21}^{+}+I_{21}^{-}\right)-\eta(\mathbf{0})\left(I_{22}^{+}+I_{22}^{-}\right)\right.\\
        &\left.-I_{\eta}^{+}-I_{\eta}^{-}-\int_{S_{h}} \delta {f}_{1 j}(\Bx) u_{0}(s \Bx) \mathrm{d} \Bx+\int_{S_{h}} \delta f_{2}(\Bx) u_{0}(s \Bx) \mathrm{d} \Bx-\xi_{j}^{\pm}(s)\right].
    \end{split}
\end{equation}
Taking $s=j^{\beta}$, where $max\{\varrho,0\}<\beta<\Upsilon$, and letting $j\to \infty$, by (\ref{U0}), (\ref{f1j}), (\ref{f2}), (\ref{I3}), Lemmas \ref{lemma2.6} and \ref{lemma2.7}, we can deduce that
\begin{equation}
    \eta(\mathbf{0})\left(\mu\left(\theta_{m}\right)^{-2}+\mu\left(\theta_{M}\right)^{-2}\right) \lim _{j \rightarrow \infty} v_{j}(\mathbf{0})=0.\label{2.49}
\end{equation}
Using the assumption of (\ref{2.10}), one has that
\begin{equation}
    \mu\left(\theta_{m}\right)^{-2}+\mu\left(\theta_{M}\right)^{-2}=\frac{\left(\cos \theta_{m}+\cos \theta_{M}\right)+\mathbf{i}\left(\sin \theta_{m}+\sin \theta_{M}\right)}{\left(\cos \theta_{m}+\mathbf{i} \sin \theta_{m}\right)\left(\cos \theta_{M}+\mathbf{i} \sin \theta_{M}\right)}\neq 0.\label{2.50}
\end{equation}
By assumption (\ref{2.9}), it holds that $\eta(0)\neq 0$. Combining (\ref{2.49}) with (\ref{2.50}), we know that
\begin{equation*}
    \lim _{j \rightarrow \infty} v_{j}(\mathbf{0})=0.
\end{equation*}
Thus, it is an easy consequence that
\begin{equation*}
    \lim _{j \rightarrow \infty} |v_{j}(\mathbf{0})|=\lim _{\rho \rightarrow+0} \frac{1}{m\left(B\left(\Bx_{c}, \rho\right)\cap\Omega\right)} \int_{B\left(\Bx_{c}, \rho\right)\cap\Omega}\left|v_{j}(\Bx)\right| \mathrm{d} \Bx.
\end{equation*}
Combining with  the triangular inequality
\begin{equation*}
    \begin{split}
        &\lim _{\rho \rightarrow+0} \frac{1}{m\left(B\left(\Bx_{c}, \rho\right)\cap\Omega\right)} \int_{B\left(\Bx_{c}, \rho\right)\cap\Omega}|v(\Bx)| \mathrm{d} \Bx \\
        \leq & \lim _{j \rightarrow \infty}\left(\lim _{\rho \rightarrow+0} \frac{1}{m\left(B\left(\Bx_{c}, \rho\right)\cap\Omega\right)} \int_{B\left(\Bx_{c}, \rho\right)\cap\Omega}\left|v(\Bx)-v_{j}(\Bx)\right| \mathrm{d} \Bx\right.\\
        &\left.+\lim _{\rho \rightarrow+0} \frac{1}{m\left(B\left(\Bx_{c}, \rho\right)\cap\Omega\right)} \int_{B\left(\Bx_{c}, \rho\right)\cap\Omega}\left|v_{j}(\Bx)\right| \mathrm{d} \Bx\right),
        \end{split}
\end{equation*}
one finally arrives at
\begin{equation*}
    \lim _{\rho \rightarrow+0} \frac{1}{m\left(B\left(\Bx_{c}, \rho\right)\cap\Omega\right)} \int_{B\left(\Bx_{c}, \rho\right)\cap\Omega}|v(\Bx)| \mathrm{d} \Bx=0.
\end{equation*}

The proof is complete.
\end{proof}

We next consider the degenerate case of Theorem \ref{Theorem 2.1} with $\eta \equiv 0$. By slightly modifying our proof of Theorem \ref{Theorem 2.1}, we can show the following result.

\begin{cor}\label{maincor1}
    Let $v \in H^{1}(\Omega)$ and $w \in H^{1}(\Omega)$ be a pair of eigenfunctions to \eqref{1.1} with $\eta \equiv 0$ and $k \in \mathbb{R}_{+} $. Let $W$ and $S_h$ be the same as described in Theorem \ref{Theorem 2.1}. Assume that $q w \in C^{\alpha}\left(\bar{S}_{h}\right)$ for $0<\alpha<1$. Under the condition \eqref{2.10} and that the transmission eigenfunction $v$ can be approximated in $H^{1}(S_h)$ by the Herglotz functions $v_j$ with kernels $g_j$ satisfying
    \begin{equation}
        \left\|v-v_{j}\right\|_{H^{1}\left(S_{h}\right)} \leq j^{-\Upsilon}, \quad\left\|g_{j}\right\|_{L^{2}\left(\mathbb{S}^{n-1}\right)} \leq C j^{\varrho},\label{newas}
    \end{equation}
    for some constants $C,\varrho \text{ and }\Upsilon$ with $C>0, \Upsilon>0 \text { and } \varrho<\alpha \Upsilon/2$, one has
    \begin{equation}
        \lim _{\rho \rightarrow+0} \frac{1}{m\left(B\left(\Bx_{c}, \rho\right)\cap\Omega\right)} \int_{B\left(\Bx_{c}, \rho\right)\cap\Omega} V(\Bx)w(\Bx) \mathrm{d} \Bx=0.\label{newre}
    \end{equation}
\end{cor}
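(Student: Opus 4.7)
The plan is to adapt the proof of Theorem~\ref{Theorem 2.1} to the degenerate case $\eta\equiv 0$. With $\eta\equiv 0$ both the boundary integrals $I_2^{\pm}$ over $\Gamma_h^{\pm}$ and the Herglotz approximation defect $\xi_j^{\pm}(s)$ vanish identically, so the central identity~\eqref{Z} collapses to
\[
I_1+\Delta_j(s)=I_3,
\]
with $I_1$, $\Delta_j(s)$, $I_3$ as in~\eqref{Sh} and~\eqref{I23}, and with the boundary condition on $\Gamma_h^{\pm}$ simplifying to $v-w=0$ and $\partial_\nu(v-w)=0$. The analytical work that Theorem~\ref{Theorem 2.1} invested in the $\eta$-driven leading term is no longer needed; the burden shifts to extracting the leading asymptotic of $I_1$ itself.

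Writing $f_{1j}-f_2=k^{2}(qw-v_j)$, note that $qw\in C^\alpha(\overline{S_h})$ by hypothesis and $\|v_j\|_{C^\alpha}\le\mathrm{diam}(S_h)^{1-\alpha}\sqrt{2\pi}(1+k)\|g_j\|_{L^{2}(\mathbb{S}^{n-1})}\le Cj^{\varrho}$ by Lemma~\ref{lemma2.2}. Applying Lemma~\ref{lemma2.4} then yields
\[
I_1=k^{2}\bigl((qw)(\mathbf{0})-v_j(\mathbf{0})\bigr)\int_{S_h}u_0(s\Bx)\,\mathrm{d}\Bx+k^{2}\!\int_{S_h}\delta(qw-v_j)(\Bx)\,u_0(s\Bx)\,\mathrm{d}\Bx.
\]
Lemma~\ref{Lemma 2.1} identifies the principal integral as $6\mathbf{i}(e^{-2\theta_M\mathbf{i}}-e^{-2\theta_m\mathbf{i}})s^{-2}$ up to exponentially small error, while~\eqref{2.5} bounds the remainder by $\mathcal{O}(j^{\varrho}s^{-\alpha-2})$. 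By Corollary~\ref{cor2.2} one has $|\Delta_j(s)|=\mathcal{O}(j^{-\Upsilon}s^{-1})$, and $|I_3|$ decays exponentially in $\sqrt{s}$ as in~\eqref{I3}.

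Multiplying the identity by $s^{2}$ and taking $s=j^{\beta}$ with $\beta\in(2\varrho/\alpha,\Upsilon)$, an interval that is non-empty precisely because $\varrho<\alpha\Upsilon/2$, every term other than the leading contribution to $s^{2}I_1$ tends to zero as $j\to\infty$. Since $e^{-2\theta_M\mathbf{i}}-e^{-2\theta_m\mathbf{i}}\neq 0$ under~\eqref{2.10}, we deduce
\[
\lim_{j\to\infty}v_j(\mathbf{0})=(qw)(\mathbf{0}).
\]

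To convert this pointwise limit into the averaged statement~\eqref{newre}, decompose $Vw=(qw-v)+(v-w)$. Continuity of $qw$ gives $\frac{1}{m(B(\mathbf{x}_c,\rho)\cap\Omega)}\int_{B(\mathbf{x}_c,\rho)\cap\Omega}qw\,\mathrm{d}\Bx\to(qw)(\mathbf{0})$. Each $v_j$ is real analytic, so $\frac{1}{m(B(\mathbf{x}_c,\rho)\cap\Omega)}\int v_j\,\mathrm{d}\Bx\to v_j(\mathbf{0})$; coupling the $H^{1}$-approximation $\|v-v_j\|_{L^{2}(S_h)}\le j^{-\Upsilon}$ with a diagonal choice $\rho=\rho_j\to 0$ satisfying $\rho_j\gg j^{-\Upsilon}$ (for instance $\rho_j=j^{-\Upsilon/2}$) then yields $\frac{1}{m(B(\mathbf{x}_c,\rho_j)\cap\Omega)}\int v\,\mathrm{d}\Bx\to(qw)(\mathbf{0})$. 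Since $v-w=0$ on $\Gamma_h^{\pm}$, a Poincar\'e--Friedrichs inequality on the sector $S_\rho$ with mixed boundary data gives $\|v-w\|_{L^{2}(S_\rho)}\le C\rho\|\nabla(v-w)\|_{L^{2}(S_\rho)}$, whence $\frac{1}{m(S_\rho)}\int_{S_\rho}(v-w)\,\mathrm{d}\Bx\to 0$ by absolute continuity of the $L^{2}$-norm. Combining these three limits produces~\eqref{newre}. The main obstacle is interleaving the limits $\rho\to 0$ and $j\to\infty$ in the averaging step, since the corner $\mathbf{x}_c$ need not be a Lebesgue point of the $H^{1}$ function $v$; the diagonal scaling $\rho_j=j^{-\Upsilon/2}$ is what balances the $L^{2}$-defect against the volume factor $\rho_j^{2}$.
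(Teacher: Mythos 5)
Your proof is correct and follows essentially the same route as the paper's: with $\eta\equiv 0$ the identity \eqref{Z} collapses, multiplying by $s^{2}$ and setting $s=j^{\beta}$ with $\beta$ in a window that is nonempty precisely because $\varrho<\alpha\Upsilon/2$ yields $\lim_{j\to\infty}v_j(\mathbf{0})=(qw)(\mathbf{0})$, and the averaged statement follows from $Vw=(qw-v)+(v-w)$ together with $v=w$ on $\Gamma_h^{\pm}$. The differences are only in bookkeeping: your sharper bound $|\Delta_j(s)|=\mathcal{O}(j^{-\Upsilon}s^{-1})$ widens the admissible window to $\beta<\Upsilon$ (versus the paper's $\beta<\Upsilon/2$), and your diagonal coupling of $\rho$ with $j$ plus the Poincar\'e--Friedrichs step make explicit the interchange of the limits $\lim_{j}\lim_{\rho}$ and the identity $\lim_{\rho}\frac{1}{m}\int v=\lim_{\rho}\frac{1}{m}\int w$ that the paper asserts without justification.
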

 \begin{proof}
     Since the proof is similar to that of Theorem \ref{Theorem 2.1}, we only outline some necessary modifications in the following. Without loss of generality, we assume that $\Bx_c=0$. Since $\eta (\Bx)\equiv 0$, from (\ref{Z}), (\ref{I23}) and (\ref{I1}), we have the following integral indentity
     \begin{equation}
        \left({f}_{1 j}(0)-f_{2}(0)\right) \int_{S_{h}} u_{0}(s \Bx) \mathrm{d} \Bx+\Delta_{j}(s) =I_3 - \int_{S_{h}} \delta f_{1 j}(\Bx) u_{0}(s \Bx) \mathrm{d} \Bx+\int_{S_{h}} \delta f_{2}(\Bx) u_{0}(s \Bx) \mathrm{d} \Bx,\label{newmain}
     \end{equation}
where $\Delta_{j}(s)$ and $I_{3}$ are defined in (\ref{Sh}) and (\ref{I23}), $\delta f_{1j}(\Bx)$ and $\delta f_{2}(\Bx)$ are deduced by Lemma \ref{lemma2.4} with $f_{2}(\Bx)$ defined in (\ref{2.12}) and $f_{1j}(\Bx)=-k^{2}v_{j}$. Multiplying $s^{2}$ on both sides of (\ref{newmain}), taking $s=J^{\beta}$ where $\max \{\varrho/\alpha,0\}<\beta<\Upsilon/2$, using the assumptions (\ref{newas}) and (\ref{2.10}), and by letting $j \to \infty$, from (\ref{U0}), (\ref{df1j}), (\ref{f2}), (\ref{D}) and (\ref{I3}), we can prove that
\begin{equation}
    \lim_{j\to \infty} v_{j}(\mathbf{0})= \frac{f_{2}(\mathbf{0})}{-k^2}.
\end{equation}
Since
\begin{equation*}
    \begin{split}
        &\lim _{j \rightarrow \infty} v_{j}(\mathbf{0})=\lim _{j \rightarrow \infty} \lim _{\rho \rightarrow+0} \frac{1}{m(B(\mathbf{0}, \rho)\cap\Omega)} \int_{B(\mathbf{0}, \rho)\cap\Omega} v_{j}(\mathbf{x}) \mathrm{d} \mathbf{x}\\
        =&\lim _{\rho \rightarrow+0} \frac{1}{m(B(\mathbf{0}, \rho)\cap\Omega)} \int_{B(\mathbf{0}, \rho)\cap\Omega} v(\mathbf{x}) \mathrm{d} \mathbf{x},
    \end{split}
\end{equation*}
and
\begin{equation*}
    \frac{f_{2}(\mathbf{0})}{-k^{2}}=\lim _{\rho \rightarrow+0} \frac{1}{m(B(\mathbf{0}, \rho)\cap\Omega)} \int_{B(\mathbf{0}, \rho)\cap\Omega} q w(\mathbf{x}) \mathrm{d} \mathbf{x},
\end{equation*}
which together with the fact that
\begin{equation*}
    \lim _{\rho \rightarrow+0} \frac{1}{m(B(\mathbf{0}, \rho)\cap\Omega)} \int_{B(\mathbf{0}, \rho)\cap\Omega} v(\mathbf{x}) \mathrm{d} \mathbf{x}=\lim _{\rho \rightarrow+0} \frac{1}{m(B(\mathbf{0}, \rho)\cap\Omega)} \int_{B(\mathbf{0}, \rho)\cap\Omega} w(\mathbf{x}) \mathrm{d} \mathbf{x},
\end{equation*}
readily implies \eqref{newre}.

The proof is complete.
 \end{proof}

\section{Vanishing properties in three dimensions}

In this section, we consider the vanishing properties of the transmission eigenfunctions in the three-dimensional case. We first introduce the (edge) corner geometry in the three-dimensional setting. It is described by $W \times(-M, M),$ where $W$ is a sector defined in (\ref{2.1}) and $M \in \mathbb{R}_{+} .$ It is readily seen that $W \times(-M, M)$ actually describes an edge singularity and we call it a $3 \mathrm{D}$ corner for notational unification. Suppose that the domain $\Omega \subset \mathbb{R}^{3}$ possesses a $3 \mathrm{D}$ corner. Let $\Bx_{c} \in \mathbb{R}^{2}$ be the vertex of $W$ and $x_{3}^{c} \in(-M, M) .$ Then $\left(\Bx_{c}, x_{3}^{c}\right)$ is defined as an edge point of $W \times(-M, M)$. In what follows, we use $\mathbf{x}=(\mathbf{x}', x_3)\in W\times (-M, M)$ to signify a point in the edge corner.
Consider the following conductive transmission eigenvalue problem:
\begin{equation}\label{3DE}
\begin{cases}
\big(\Delta+k^2(1+V)\big) w=0\ &\ \mbox{in}\ \ W\times (-M,M),\medskip\\
(\Delta+k^2) v=0\ &\ \mbox{in}\ \ W\times (-M, M),\medskip\\
w=v,\ \ \partial_\nu w=\partial_\nu v+\eta v\ &\ \mbox{on}\ \Gamma^\pm\times (-M, M),
\end{cases}
\end{equation}
where $\Gamma^{\pm}$ are the two boundary pieces of $W$.

For the subsequent use, we introduce the following dimension reduction operator.

\newtheorem{definition}{Definition}[section]
\begin{definition}\label{Definition 3.1}
    \cite[Definition 2.3.1]{ref1} Let $W \subset \mathbb{R}^{2}$ be defined in \eqref{2.1}, $M>0 .$ For a given function
$g$ with the domain $W \times(-M, M) .$ Pick up any point $x_{3}^{c} \in(-M, M) .$ Suppose $\psi \in C_{0}^{\infty}\left(\left(x_{3}^{c}-L, x_{3}^{c}+L\right)\right)$ is a nonnegative function and $\psi \neq 0,$ where $L$ is sufficiently small such that $\left(x_{3}^{c}-L, x_{3}^{c}+L\right) \subset(-M, M),$ and write $\Bx=\left(\Bx^{\prime},  x_{3}\right) \in \mathbb{R}^{3},  \Bx^{\prime} \in \mathbb{R}^{2}$.
The dimension reduction operator $\CR$ is defined by
\begin{equation}
    \CR(g)\left(\Bx^{\prime}\right)=\int_{x_{3}^{c}-L}^{x_{3}^{c}+L} \psi\left(x_{3}\right) g\left(\Bx^{\prime}, x_{3}\right) d x_{3}.\label{3.1}
\end{equation}
where $\Bx^{\prime} \in W$.
\end{definition}

The following lemma shows the regularity of the functions after applying the dimension reduction operator.
\begin{lemma}\label{Lemma 3.1}
     Let $g \in H^{1}(W \times(-M, M)) \cap C^{\alpha}(\bar{W} \times[-M, M]),$ where $0<\alpha<1$. Then
\begin{equation*}
    \CR(g)\left(\Bx^{\prime}\right) \in H^{1}(W) \cap C^{\alpha}(\bar{W}).
\end{equation*}
\end{lemma}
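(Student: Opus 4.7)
The plan is to verify the two regularity assertions separately, with the proof ultimately reducing to Fubini's theorem, the Cauchy--Schwarz inequality, and the fact that $\psi$ is a fixed smooth compactly supported cutoff in $x_3$.

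First I would handle the $C^\alpha$ part since it is essentially immediate. Because $g \in C^\alpha(\overline{W}\times[-M,M])$, for any $\mathbf{x}',\mathbf{y}' \in \overline{W}$ and any $x_3 \in [x_3^c-L, x_3^c+L]$ one has $|g(\mathbf{x}',x_3) - g(\mathbf{y}',x_3)| \leq \|g\|_{C^\alpha}|\mathbf{x}'-\mathbf{y}'|^\alpha$. Inserting this into the definition \eqref{3.1} and using $|\psi| \leq \|\psi\|_{L^\infty}$ with compact support yields
\begin{equation*}
|\mathcal{R}(g)(\mathbf{x}') - \mathcal{R}(g)(\mathbf{y}')| \leq 2L\|\psi\|_{L^\infty}\|g\|_{C^\alpha}|\mathbf{x}'-\mathbf{y}'|^\alpha,
\end{equation*}
together with a similar sup bound on $|\mathcal{R}(g)(\mathbf{x}')|$ itself. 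This shows $\mathcal{R}(g) \in C^\alpha(\overline{W})$.

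Next I would establish the $H^1(W)$ regularity. By Cauchy--Schwarz in the $x_3$ variable, $|\mathcal{R}(g)(\mathbf{x}')|^2 \leq \|\psi\|_{L^2}^2 \int_{-M}^{M}|g(\mathbf{x}',x_3)|^2\,\mathrm{d}x_3$, and integrating over $W$ and invoking Fubini places $\mathcal{R}(g)$ in $L^2(W)$ with norm controlled by $\|\psi\|_{L^2}\|g\|_{L^2(W\times(-M,M))}$. For the weak tangential derivatives $\partial_{x_i}$, $i=1,2$, I would test against $\varphi \in C_c^\infty(W)$: the product $\psi(x_3)\varphi(\mathbf{x}')$ lies in $C_c^\infty(W\times(-M,M))$, so applying the definition of the weak derivative of $g$ in $x_i$ together with Fubini gives
\begin{equation*}
\int_W \mathcal{R}(g)(\mathbf{x}')\,\partial_{x_i}\varphi(\mathbf{x}')\,\mathrm{d}\mathbf{x}' = -\int_W \varphi(\mathbf{x}')\left(\int_{x_3^c-L}^{x_3^c+L}\psi(x_3)\,\partial_{x_i} g(\mathbf{x}',x_3)\,\mathrm{d}x_3\right)\mathrm{d}\mathbf{x}'.
\end{equation*}
Hence $\partial_{x_i}\mathcal{R}(g)(\mathbf{x}') = \int \psi(x_3)\,\partial_{x_i}g(\mathbf{x}',x_3)\,\mathrm{d}x_3$ in the distributional sense, and one more application of Cauchy--Schwarz plus Fubini bounds it in $L^2(W)$ by $\|\psi\|_{L^2}\|\partial_{x_i}g\|_{L^2(W\times(-M,M))}$.

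There is no serious obstacle here; the only subtlety to watch is justifying the exchange of integration order and the transfer of the weak derivative from $g$ onto $\varphi$, which is legitimate precisely because $\psi$ is smooth and compactly supported in the open interval $(x_3^c-L, x_3^c+L)\subset(-M,M)$ so that no boundary terms in $x_3$ appear. Combining the two parts completes the proof that $\mathcal{R}(g) \in H^1(W)\cap C^\alpha(\overline{W})$.
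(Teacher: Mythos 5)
Your proof is correct and follows essentially the same route as the paper: both arguments reduce to the commutation identity $\partial_{x_i}\mathcal{R}(g)=\mathcal{R}(\partial_{x_i}g)$ followed by an integral inequality to bound the $L^2(W)$ norms, together with a direct estimate of the H\"older seminorm using $|g(\mathbf{x}',x_3)-g(\mathbf{y}',x_3)|\leq \|g\|_{C^\alpha}|\mathbf{x}'-\mathbf{y}'|^\alpha$. The only (cosmetic) difference is that the paper justifies the commutation by dominated convergence and bounds via Minkowski's integral inequality with $\|\psi\|_\infty$, whereas you justify it by testing against $\psi(x_3)\varphi(\mathbf{x}')\in C_c^\infty(W\times(-M,M))$ and bound via Cauchy--Schwarz with $\|\psi\|_{L^2}$; your justification is, if anything, the more careful one for a function that is merely $H^1$.
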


\begin{proof}
     We first show that $\CR:H^{1}(W\times(-M,M))\to H^{1}(W)$ is a bounded operator. Let $g\in H^{1}(W\times(-M,M))$, by the dominated convergence theorem, we know that $\partial_{\Bx^{\prime}}^{\alpha}\CR(g)(\Bx^{\prime})= \CR(\partial_{\Bx^{\prime}}^{\alpha}g)(\Bx^{\prime})$ for $\alpha=(i,j)$ with $ i,j=0,1$ and $i+j\leq 1$, so
    \begin{equation*}
        |\partial_{\Bx^{\prime}}^{\alpha}\CR(g)(\Bx^{\prime})| \leq \int_{x_{3}^{c}-L}^{x_{3}^{c}+L} \|\psi\|_{\infty}\left|\partial_{\Bx^{\prime}}^{\alpha}g(\Bx^{\prime},x_{3}) \right| dx_{3}.
    \end{equation*}
    Furthermore, by Minkowski integral, we have
    \begin{equation*}
        \|\CR(g)(\Bx^{\prime})\|_{H^{1}(W)} \leq \|\psi\|_{\infty} \|g(\Bx^{\prime},x_{3})\|_{H^{1}(W\times(-M,M))}.
    \end{equation*}
    When $g\in C^{\alpha}(\bar{W}\times[-M, M])$, it can be easily drived that
    \begin{equation*}
        |\CR(g)(\Bx^{\prime}) - \CR(g)(\mathbf{y}^{\prime})| \leq 2 \|\psi\|_{\infty} \|g\|_{C^{\alpha}}|\Bx^{\prime} - \mathbf{y}^{\prime}|^{\alpha}.
    \end{equation*}
    which means that $\CR(g)(\Bx^{\prime}) \in C^{\alpha}(\bar{W})$.
\end{proof}
Similar to Lemma \ref{lemma2.2}, we have the following lemma.
\begin{lemma}\label{Lemma 3.2}
    For the Herglotz wave function $v_j$ defined in \eqref{2.3} in three dimensions, it holds that
    \begin{equation}
        \begin{split}
            \|\CR(v_{j})\|_{C^{1}} & \leq 4L\sqrt{\pi}\|\psi\|_{C^{\infty}}(1+k)\|g_j\|_{L^{2}(\mathbb{S}^{2})},\\
            \|\CR(v_{j})\|_{C^{\alpha}} & \leq \operatorname{diam}(S_{h})^{1-\alpha}\|\CR(v_{j})\|_{C^{1}}.
        \end{split}
    \end{equation}
    where $0<\alpha<1$ and $\operatorname{diam}(S_{h})$ is the diameter of $S_h$.
\end{lemma}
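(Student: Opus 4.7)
The plan is to mirror the two-dimensional argument underlying Lemma~\ref{lemma2.2} at the level of $v_j$ itself, then push the estimate through the dimension reduction operator $\mathcal{R}$, and finally invoke a standard $C^1 \to C^\alpha$ interpolation on the bounded set $S_h$. The strategy is essentially the same as in two dimensions; the only new ingredients are the correct $\mathbb{S}^2$ Cauchy--Schwarz constant and the $x_3$-integration against $\psi$.

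\textbf{Step 1: Pointwise $C^1$ bounds on $v_j$ in $\mathbb{R}^3$.} Because $v_j(\mathbf{x})=\int_{\mathbb{S}^{2}} e^{ik\xi\cdot \mathbf{x}} g_j(\xi)\,d\sigma(\xi)$ is an entire solution of the Helmholtz equation, differentiation under the integral is immediate, giving $\nabla v_j(\mathbf{x}) = \int_{\mathbb{S}^{2}} ik\xi\, e^{ik\xi\cdot\mathbf{x}} g_j(\xi)\,d\sigma(\xi)$. Applying Cauchy--Schwarz on $\mathbb{S}^2$ (with total area $4\pi$) yields
\begin{equation*}
|v_j(\mathbf{x})| \le 2\sqrt{\pi}\,\|g_j\|_{L^2(\mathbb{S}^{2})},\qquad |\nabla v_j(\mathbf{x})| \le 2k\sqrt{\pi}\,\|g_j\|_{L^2(\mathbb{S}^{2})},
\end{equation*}
so that $\|v_j\|_{C^1(\mathbb{R}^3)} \le 2\sqrt{\pi}(1+k)\|g_j\|_{L^2(\mathbb{S}^{2})}$, which is the three-dimensional analogue of Lemma~\ref{lemma2.2}.

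\textbf{Step 2: Transfer the bound through $\mathcal{R}$.} Since $\psi \in C_0^\infty((x_3^c-L,x_3^c+L))$, dominated convergence permits one to commute $\nabla_{\mathbf{x}'}$ with the $x_3$-integration in \eqref{3.1}, so
\begin{equation*}
\nabla_{\mathbf{x}'}\mathcal{R}(v_j)(\mathbf{x}') = \int_{x_3^c-L}^{x_3^c+L} \psi(x_3)\, \nabla_{\mathbf{x}'} v_j(\mathbf{x}',x_3)\,dx_3.
\end{equation*}
Combining the pointwise bounds from Step~1 with $|\psi(x_3)| \le \|\psi\|_\infty \le \|\psi\|_{C^\infty}$ and the interval length $2L$ gives
\begin{equation*}
|\mathcal{R}(v_j)(\mathbf{x}')| + |\nabla_{\mathbf{x}'}\mathcal{R}(v_j)(\mathbf{x}')| \le 2L\,\|\psi\|_{C^\infty}\cdot 2\sqrt{\pi}(1+k)\|g_j\|_{L^2(\mathbb{S}^{2})},
\end{equation*}
which is precisely the first asserted inequality.

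\textbf{Step 3: Interpolation from $C^1$ to $C^\alpha$ on $S_h$.} For any $C^1$ function $f$ on $\overline{S_h}$ and $\mathbf{x}',\mathbf{y}' \in S_h$, the mean value inequality provides
\begin{equation*}
|f(\mathbf{x}') - f(\mathbf{y}')| \le \|\nabla f\|_{\infty}|\mathbf{x}'-\mathbf{y}'| \le \|\nabla f\|_{\infty}\operatorname{diam}(S_h)^{1-\alpha}|\mathbf{x}'-\mathbf{y}'|^{\alpha}.
\end{equation*}
Together with $\|f\|_\infty \le \|f\|_{C^1}$ and the fact that one may take $\operatorname{diam}(S_h) \le h$ small enough so the factor $\operatorname{diam}(S_h)^{1-\alpha}$ absorbs the sum, this yields $\|f\|_{C^\alpha(\overline{S_h})} \le \operatorname{diam}(S_h)^{1-\alpha}\|f\|_{C^1}$; specialising to $f=\mathcal{R}(v_j)$ gives the second inequality.

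I do not expect any substantive obstacle: the computation is entirely routine. The only items requiring care are the bookkeeping of the numerical constant (the two-dimensional factor $\sqrt{2\pi}$ becomes the three-dimensional $2\sqrt{\pi}$ because $|\mathbb{S}^2|=4\pi$) and cleanly absorbing the interval length $2L$ and the test function norm $\|\psi\|_\infty$ into $\|\psi\|_{C^\infty}$ so that the announced constant $4L\sqrt{\pi}\|\psi\|_{C^\infty}(1+k)$ is matched exactly.
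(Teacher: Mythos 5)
Your proof is correct and follows exactly the route the paper intends (the paper gives no explicit proof of Lemma~\ref{Lemma 3.2}, merely asserting it is ``similar to Lemma~\ref{lemma2.2}''): differentiate under the integral sign, apply Cauchy--Schwarz on $\mathbb{S}^2$ to produce the factor $2\sqrt{\pi}$, pass the pointwise bounds through the dimension reduction operator to pick up $2L\|\psi\|_{\infty}$, and interpolate from $C^1$ to $C^\alpha$ on $S_h$. One small slip in Step~3: shrinking $\operatorname{diam}(S_h)$ makes $\operatorname{diam}(S_h)^{1-\alpha}$ \emph{smaller} and therefore works against, not for, absorbing the sup-norm term $\|f\|_{\infty}$, so the second inequality is exact only when $\|\cdot\|_{C^{\alpha}}$ is read as the H\"older seminorm (which is how the paper actually uses it, e.g.\ in Lemma~\ref{lemma2.4}) --- an imprecision already present in the paper's own Lemma~\ref{lemma2.2}.
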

Using the Jacobi-Anger expansion (\cite[Page 33]{ref4}) in $\RR^{3}$, one can derive the following result:
\begin{lemma}\label{Lemma 3.3}
The Herglotz function $v_j$ has the expansion in three dimensions as follows:
    \begin{equation}
        v_{j}(\Bx)=v_{j}(\mathbf{0}) j_{0}(k|\Bx|)+\sum_{\ell=1}^{\infty} \gamma_{\ell j} \mathbf{i}^{\ell}(2 \ell+1) j_{\ell}(k|\Bx|), \quad \Bx \in \mathbb{R}^{3},
    \end{equation}
    where
\begin{equation}\label{eq:nn1}
    v_{j}(\mathbf{0})=\int_{\mathbb{S}^{2}} g_{j}(\mathbf{d}) \mathrm{d} \sigma(\mathbf{d}), \quad \gamma_{\ell j}:=\int_{\mathbb{S}^{2}} g_{j}(\mathbf{d}) P_{\ell}(\cos (\varphi)) \mathrm{d} \sigma(\mathbf{d}), \quad \mathbf{d} \in \mathbb{S}^{2}.
\end{equation}
Here $j_{\ell}(t)$ is the $\ell$-th spherical Bessel function \cite{ref5}, and $\varphi$ is the angle between $\Bx$ and $\mathbf{d}$
Moreover, we have the explicit expression of $j_{\ell}(t)$ as
\begin{equation}
    j_{\ell}(t)=\frac{t^{\ell}}{(2 \ell+1) ! !}\left(1-\sum_{l=1}^{\infty} \frac{(-1)^{l} t^{2 l}}{2^{l} l ! N_{\ell, l}}\right),\ \ell=0,1,2,\dots,
\end{equation}
where $N_{\ell, l}=(2 \ell+3) \cdots(2 \ell+2 l+1)$.
\end{lemma}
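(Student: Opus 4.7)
The plan is to expand the plane wave kernel in \eqref{2.3} via the classical three-dimensional Jacobi--Anger expansion and then interchange summation with integration. Starting from the identity
\begin{equation*}
e^{\mathbf{i} k \xi\cdot \mathbf{x}} = \sum_{\ell=0}^{\infty} \mathbf{i}^{\ell} (2\ell+1) j_{\ell}(k|\mathbf{x}|) P_{\ell}(\cos\varphi),
\end{equation*}
where $\varphi$ is the angle between $\xi \in \mathbb{S}^{2}$ and $\mathbf{x}$, and $P_{\ell}$ is the Legendre polynomial of degree $\ell$, I would substitute this into $v_{j}(\mathbf{x}) = \int_{\mathbb{S}^{2}} e^{\mathbf{i} k \xi \cdot \mathbf{x}} g_{j}(\xi)\, d\sigma(\xi)$ to obtain
\begin{equation*}
v_{j}(\mathbf{x}) = \sum_{\ell=0}^{\infty} \mathbf{i}^{\ell}(2\ell+1) j_{\ell}(k|\mathbf{x}|) \int_{\mathbb{S}^{2}} g_{j}(\mathbf{d}) P_{\ell}(\cos\varphi)\, d\sigma(\mathbf{d}).
\end{equation*}
Splitting off the $\ell=0$ term and using $P_{0} \equiv 1$ together with $j_{0}(0)=1$, the leading coefficient is precisely $v_{j}(\mathbf{0})$, while for $\ell \ge 1$ the inner integral is $\gamma_{\ell j}$ as declared in \eqref{eq:nn1}. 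This produces the claimed expansion verbatim.

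For the power-series representation of $j_{\ell}(t)$, I would invoke the standard Frobenius expansion of the spherical Bessel function of the first kind about the origin, which can equally be obtained from the identity $j_{\ell}(t) = \sqrt{\pi/(2t)}\, J_{\ell+1/2}(t)$ and the usual series for $J_{\nu}$. The leading monomial $t^{\ell}/(2\ell+1)!!$ is classical; an induction on the two-step recurrence for the Taylor coefficients generates the products $N_{\ell,l} = (2\ell+3)(2\ell+5)\cdots(2\ell+2l+1)$ appearing in the denominators, matching the stated formula up to the tabulated sign convention.

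The one subtle point, and the only place where anything non-trivial happens, is justifying the interchange of integration and summation. I would handle it by verifying absolute and uniform convergence of the Jacobi--Anger series on each ball $\{|\mathbf{x}| \le R\}$ via the elementary bound $|j_{\ell}(t)| \le t^{\ell}/(2\ell+1)!!$ together with $\|P_{\ell}\|_{L^{\infty}(\mathbb{S}^{2})} = 1$, then applying Fubini--Tonelli against $g_{j} \in L^{2}(\mathbb{S}^{2})$ through Cauchy--Schwarz. Once that bookkeeping is in place, the lemma reduces to a direct transcription of well-known free-wave identities and presents no structural obstacle beyond this convergence check.
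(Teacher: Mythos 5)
Your proposal is correct and follows exactly the route the paper intends: the paper offers no written proof for this lemma beyond citing the Jacobi--Anger expansion in $\mathbb{R}^3$ from Colton--Kress, and your substitution into \eqref{2.3}, termwise integration justified by $|j_\ell(t)|\le t^\ell/(2\ell+1)!!$ and $\|P_\ell\|_{L^\infty([-1,1])}=1$, and identification of the $\ell=0$ coefficient with $v_j(\mathbf{0})$ is precisely that argument, carried out with more care about the interchange than the paper bothers with. Your hedge ``up to the tabulated sign convention'' is well taken: the standard series actually reads $j_\ell(t)=\frac{t^\ell}{(2\ell+1)!!}\bigl(1+\sum_{l\ge1}\frac{(-1)^l t^{2l}}{2^l l! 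N_{\ell,l}}\bigr)$, so the minus sign in front of the sum in the paper's displayed formula is a typo (it would give $j_0(t)=1+t^2/6-\cdots$ instead of $\sin t/t$), not an error in your derivation.
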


Applying the dimension reduction operator to the above spherical Bessel function, we can obtain the following lemma.
\begin{lemma}\label{Lemma 3.4}
$\CR(j_0)(\Bx^{\prime})$ and $\CR\left(j_{\ell}\right)\left(\Bx^{\prime}\right)$ have the deformation as following:
    \begin{equation}
    \begin{split}
        \CR(j_0)(\Bx^{\prime})&=C(\psi)\left[1-\sum_{l=1}^{\infty} \frac{(-1)^{l} k^{2 l}}{2^{l} l !(2 l+1) ! !}\left(\left|\Bx^{\prime}\right|^{2}+a_{0, l}^{2}\right)^{l}\right],\\
        \CR\left(j_{\ell}\right)\left(\Bx^{\prime}\right)&=\frac{k^{\ell}\left(\left|\Bx^{\prime}\right|^{2}+a_{\ell}^{2}\right)^{(\ell-1) / 2}}{(2 \ell+1) ! !}\left[1-\sum_{l=1}^{\infty} \frac{(-1)^{l} k^{2 l}\left(\left|\Bx^{\prime}\right|^{2}+a_{\ell, l}^{2}\right)^{l}}{2^{l} l ! N_{\ell, l}}\right] C_{1}(\psi)\left|\Bx^{\prime}\right|^{2},
    \end{split}
   \end{equation}
   where
   $$N_{\ell, l}=(2 \ell+3) \cdots(2 \ell+2 l+1),\quad a_{0, l},a_{\ell},a_{\ell,l} \in[-L, L], $$
   and
   $$C(\psi)=\int_{-L}^{L} \psi\left(x_{3}\right) \mathrm{d} x_{3}, \quad  C_{1}(\psi)=\int_{-\arctan L /\left|\Bx^{\prime}\right|}^{\arctan L /\left|\Bx^{\prime}\right|} \psi\left(\left|\Bx^{\prime}\right| \tan \varpi\right) \sec ^{3} \varpi \mathrm{d} \varpi, $$
   with
   $$\varpi \in\left[-\arctan L /\left|\Bx^{\prime}\right|, \arctan L /\left|\Bx^{\prime}\right|\right].$$
\end{lemma}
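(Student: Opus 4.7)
The strategy is to substitute the explicit series representation of $j_\ell$ from Lemma~\ref{Lemma 3.3} into the dimension reduction operator, exploit $|\Bx|^2=|\Bx'|^2+x_3^2$, and then use the first mean value theorem for integrals to pull the $x_3$-dependence out as evaluations at points of $[-L,L]$. Without loss of generality I assume $x_3^c=0$, and I note that every application of the mean value theorem below is against $\psi$ (or $\psi$ multiplied by a manifestly positive trigonometric factor), which is nonnegative and nontrivial by Definition~\ref{Definition 3.1}.

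For $\ell=0$, the expansion $j_0(k|\Bx|)=1-\sum_{l\ge 1}\frac{(-1)^l k^{2l}(|\Bx'|^2+x_3^2)^l}{2^l l! N_{0,l}}$ reduces the problem to evaluating $\int_{-L}^{L}\psi(x_3)(|\Bx'|^2+x_3^2)^l\,\mathrm{d}x_3$ for each $l$, which by the mean value theorem equals $(|\Bx'|^2+a_{0,l}^2)^l C(\psi)$ for some $a_{0,l}\in[-L,L]$; termwise reassembly yields the first identity. For $\ell\ge 1$, the factor $|\Bx|^\ell=(|\Bx'|^2+x_3^2)^{\ell/2}$ is not polynomial for odd $\ell$, so I first perform the substitution $x_3=|\Bx'|\tan\varpi$, which sends $[-L,L]$ to $[-\arctan(L/|\Bx'|),\arctan(L/|\Bx'|)]$ and converts $|\Bx|^2$ into $|\Bx'|^2\sec^2\varpi$ and $\mathrm{d}x_3$ into $|\Bx'|\sec^2\varpi\,\mathrm{d}\varpi$. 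Splitting $\sec^{\ell+2}\varpi=\sec^3\varpi\cdot\sec^{\ell-1}\varpi$ so as to isolate the kernel of $C_1(\psi)$, the transformed integrand becomes
\[
\frac{k^\ell|\Bx'|^{\ell+1}}{(2\ell+1)!!}\,\psi(|\Bx'|\tan\varpi)\sec^3\varpi\cdot\sec^{\ell-1}\varpi\Bigl(1-\sum_{l\ge 1}\frac{(-1)^l k^{2l}|\Bx'|^{2l}\sec^{2l}\varpi}{2^l l! N_{\ell,l}}\Bigr).
\]

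I then apply the mean value theorem in two stages. In the first stage, treating $\psi(|\Bx'|\tan\varpi)\sec^3\varpi$ as the positive weight, I pull $\sec^{\ell-1}\varpi$ out of the integral and obtain a single angle $\bar\varpi$ with $a_\ell:=|\Bx'|\tan\bar\varpi\in[-L,L]$ and $|\Bx'|^2\sec^2\bar\varpi=|\Bx'|^2+a_\ell^2$. In the second stage, I expand the residual series and apply the mean value theorem termwise against the same positive weight to pull out each $\sec^{2l}\varpi$, producing $a_{\ell,l}=|\Bx'|\tan\bar\varpi_l\in[-L,L]$ with $|\Bx'|^{2l}\sec^{2l}\bar\varpi_l=(|\Bx'|^2+a_{\ell,l}^2)^l$, and leaving behind the single integral $\int\psi(|\Bx'|\tan\varpi)\sec^3\varpi\,\mathrm{d}\varpi=C_1(\psi)$. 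Reassembling the factors produces the second identity of the lemma.

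The main technical point is to justify the termwise operations in the $\ell\ge 1$ case: the interchange of $\sum$ and $\int$, the sign condition underlying the first mean value theorem, and the two-stage factorisation. Positivity of $\psi\sec^3\varpi$ on the integration interval is immediate from $|\varpi|<\pi/2$, and absolute, uniform convergence of the series follows from the factorial growth of $N_{\ell,l}$ dominating the bound $k^{2l}|\Bx'|^{2l}\sec^{2l}\varpi\le k^{2l}(|\Bx'|^2+L^2)^l$. Once these points are secured, the remainder of the proof is a routine bookkeeping of trigonometric identities.
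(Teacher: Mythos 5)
Your proposal is correct and follows essentially the same route as the paper: termwise application of the first mean value theorem for the $\ell=0$ case, and for $\ell\ge 1$ the combination of the integral mean value theorem with the substitution $x_3=|\Bx'|\tan\varpi$ to produce $C_1(\psi)$ and the evaluation points $a_\ell, a_{\ell,l}\in[-L,L]$. The only difference is cosmetic — the paper applies the mean value theorem in the $x_3$ variable first and then substitutes, while you substitute first and apply it in the $\varpi$ variable — and both arguments rest on the same positivity of the weight and the same convergence bound $k^2(|\Bx'|^2+L^2)<1$.
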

\begin{proof}
    By the mean value theorem, we have that
    \begin{equation*}
        \begin{split}
            \CR\left(j_{0}\right)\left(\Bx^{\prime}\right) &=\int_{-L}^{L} \psi\left(x_{3}\right) j_{0}(k|\Bx|) \mathrm{d} x_{3} \\
        &=\int_{-L}^{L} \psi\left(x_{3}\right) \mathrm{d} x_{3}-\sum_{l=1}^{\infty} \frac{(-1)^{l} k^{2 l}}{2^{l} l !(2 l+1) ! !} \int_{-L}^{L} \psi\left(x_{3}\right)\left(\left|\Bx^{\prime}\right|^{2}+x_{3}^{2}\right)^{l} \mathrm{d} x_{3} \\
        &=C(\psi)\left[1-\sum_{l=1}^{\infty} \frac{(-1)^{l} k^{2 l}}{2^{l} l !(2 l+1) ! !}\left(\left|\Bx^{\prime}\right|^{2}+a_{0, l}^{2}\right)^{l}\right].
        \end{split}
    \end{equation*}
    where $C(\psi)=\int_{-L}^{L} \psi\left(x_{3}\right) \mathrm{d} x_{3}$ and $a_{0, l} \in[-L, L]$.
    For $\CR\left(j_{\ell}\right)\left(\Bx^{\prime}\right),$ using the integral mean value theorem, it can be drived that for $\ell=1,2, \ldots$
\begin{equation}
    \begin{split}
        &\int_{-L}^{L} \psi\left(x_{3}\right)\left(\left|\Bx^{\prime}\right|^{2}+x_{3}^{2}\right)^{\ell / 2} \mathrm{d} x_{3} \\
=&\left(\left|\Bx^{\prime}\right|^{2}+a_{\ell}^{2}\right)^{(\ell-1) / 2} \int_{-L}^{L} \psi\left(x_{3}\right)\left(\left|\Bx^{\prime}\right|^{2}+x_{3}^{2}\right)^{1 / 2} \mathrm{d} x_{3} \\
=&\left|\Bx^{\prime}\right|^{2}\left(\left|\Bx^{\prime}\right|^{2}+a_{\ell}^{2}\right)^{(\ell-1) / 2} \int_{-\arctan L /\left|\Bx^{\prime}\right|}^{\arctan L /\left|\Bx^{\prime}\right|} \psi\left(\left|\Bx^{\prime}\right| \tan \varpi\right) \sec ^{3} \varpi \mathrm{d} \varpi \\
:=&C_{1}(\psi)\left|\Bx^{\prime}\right|^{2}\left(\left|\Bx^{\prime}\right|^{2}+a_{\ell}^{2}\right)^{(\ell-1) / 2},
        \end{split}
\end{equation}
where $a_{\ell} \in[-L, L]$. If $L<\left|\Bx^{\prime}\right|$, we know that $0<\sec \varpi<\sqrt{\frac{L^{2}}{\left|\Bx^{\prime}\right|^{2}}+1}$, where $\varpi \in\left[-\arctan L /\left|\Bx^{\prime}\right|, \arctan L /\left|\Bx^{\prime}\right|\right]$. Hence, it can be derived that
\begin{equation}
    0<C_{1}(\psi)<2^{1 / 2} \pi\|\psi\|_{\infty}.
\end{equation}
Thus, for $l=1,2,\ldots$, one has that
\begin{equation*}
    \begin{split}
        \CR\left(j_{\ell}\right)\left(\Bx^{\prime}\right) &=\int_{-L}^{L} \psi\left(x_{3}\right) j_{\ell}(k|\Bx|)\, \mathrm{d} x_{3} \\
        &=\frac{k^{\ell}}{(2 \ell+1) ! !} \int_{-L}^{L} \psi\left(x_{3}\right)\left(\left|\Bx^{\prime}\right|^{2}+x_{3}^{2}\right)^{\ell / 2}\left(1-\sum_{l=1}^{\infty} \frac{(-1)^{l} k^{2 l}\left(\left|\Bx^{\prime}\right|^{2}+x_{3}^{2}\right)^{l}}{2^{l} l ! N_{\ell, l}}\right)\, \mathrm{d} x_{3} \\
        &=\frac{k^{\ell}\left(\left|\Bx^{\prime}\right|^{2}+a_{\ell}^{2}\right)^{(\ell-1) / 2}}{(2 \ell+1) ! !}\left[1-\sum_{l=1}^{\infty} \frac{(-1)^{l} k^{2 l}\left(\left|\Bx^{\prime}\right|^{2}+a_{\ell, l}^{2}\right)^{l}}{2^{l} l ! N_{\ell, l}}\right] C_{1}(\psi)\left|\Bx^{\prime}\right|^{2}.
        \end{split}
\end{equation*}
where $a_{\ell}, a_{\ell, l} \in[-L, L]$.
\end{proof}

We next derive several critical auxiliary lemmas.

\begin{lemma}\label{Lemma 3.5}
    Let $v,w\in H^{1}(W\times (-M,M))$ be a pair of conductive transmission eigenfunctions to \eqref{3DE} and $D_{\varepsilon}=S_{h}\setminus B_{\varepsilon}$ for $0<\varepsilon<h$, $\eta \in C^{\alpha}(\bar{\Gamma}^{\pm}_{h} \times [-M,M])$ for $0<\alpha<1$ and $\eta=\eta(\Bx^{\prime})$ is independent of $x_3$. Then it holds that
\begin{equation}
    \begin{split}
        &\lim_{\varepsilon \to \infty} \int_{D_{\varepsilon}}u_{0}(s\Bx^{\prime})\left(\Delta_{\Bx^{\prime}}\CR(v) - \Delta_{\Bx^{\prime}}\CR(w)  \right) \mathrm{d} \Bx^{\prime}\\
        =&\int_{\Lambda_{h}}\left(u_{0}\left(s \Bx^{\prime}\right) \partial_{\nu} \CR(v-w)\left(\Bx^{\prime}\right)-\CR(v-w)\left(\Bx^{\prime}\right) \partial_{\nu} u_{0}\left(s \Bx^{\prime}\right)\right) \mathrm{d} \sigma\\
        &-\int_{\Gamma_{ h}^{\pm}} \eta\left(\Bx^{\prime}\right) \CR(v)\left(\Bx^{\prime}\right) u_{0}\left(s \Bx^{\prime}\right) \mathrm{d} \sigma,
    \end{split}
\end{equation}
where $\Lambda_{h}=S_{h} \cap \partial B_{h}$ and $\Gamma_{ h}^{\pm}=\Gamma^{\pm}\cap B_{h}$.
\end{lemma}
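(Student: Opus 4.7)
The plan is to apply Green's second identity to the pair $(\CR(v-w), u_{0}(s\Bx'))$ on the punctured two-dimensional sector $D_{\varepsilon}=S_{h}\setminus B_{\varepsilon}$, imitating the strategy of Lemma~\ref{lemma2.5} but with each transmission eigenfunction replaced by its $\CR$-reduction. The puncture is needed because, by Corollary~\ref{cor2.1}, the CGO solution $u_{0}(s\Bx')$ fails to be $H^{2}$ near the origin. Since $u_{0}(s\Bx')$ is harmonic on $D_{\varepsilon}$ by Lemma~\ref{Lemma 2.1}, and $\CR(v-w)\in H^{1}(W)$ by Lemma~\ref{Lemma 3.1}, Green's identity yields
\begin{equation*}
\int_{D_{\varepsilon}} u_{0}(s\Bx')\,\Delta_{\Bx'}\CR(v-w)(\Bx')\,\mathrm{d}\Bx' = \int_{\partial D_{\varepsilon}} \bigl( u_{0}\,\partial_{\nu}\CR(v-w) - \CR(v-w)\,\partial_{\nu} u_{0}\bigr)\,\mathrm{d}\sigma,
\end{equation*}
with $\partial D_{\varepsilon}=\Lambda_{h}\cup\Lambda_{\varepsilon}\cup\Gamma^{\pm}_{(\varepsilon,h)}$, where $\Gamma^{\pm}_{(\varepsilon,h)}=\Gamma^{\pm}\cap (B_{h}\setminus B_{\varepsilon})$.

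The decisive step is to translate the three-dimensional conductive transmission condition into a two-dimensional one after applying $\CR$. Because the lateral boundary $\Gamma^{\pm}\times(-M,M)$ has outer unit normal $\nu$ lying entirely in the $\Bx'$-plane and independent of $x_{3}$, and because $\eta=\eta(\Bx')$ depends only on $\Bx'$, the operator $\CR$ commutes with $\partial_{\nu}$ on $\Gamma^{\pm}$ (differentiation under the integral sign is justified by dominated convergence exactly as in the proof of Lemma~\ref{Lemma 3.1}). Applying $\CR$ to the identities $v=w$ and $\partial_{\nu}w=\partial_{\nu}v+\eta v$ on $\Gamma^{\pm}\times(-M,M)$ then gives
\begin{equation*}
\CR(v-w)=0,\qquad \partial_{\nu}\CR(v-w)(\Bx')=-\eta(\Bx')\CR(v)(\Bx') \quad\text{on } \Gamma^{\pm}_{(\varepsilon,h)}.
\end{equation*}
Substituting these reduces the $\Gamma^{\pm}_{(\varepsilon,h)}$ part of the boundary term to $-\int_{\Gamma^{\pm}_{(\varepsilon,h)}}\eta(\Bx')\CR(v)(\Bx')u_{0}(s\Bx')\,\mathrm{d}\sigma$, which is precisely the conductive contribution in the statement.

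It remains to pass to the limit $\varepsilon\to 0$ and show that the contributions on the interior arc $\Lambda_{\varepsilon}=S_{h}\cap\partial B_{\varepsilon}$ and on the short radial segments $\Gamma^{\pm}_{(0,\varepsilon)}$ vanish. By Lemma~\ref{Lemma 3.1} we have $\CR(v-w)\in H^{1}(W)$, so the trace theorem gives $\CR(v-w)\in H^{1/2}(\Lambda_{\varepsilon})$ and $\partial_{\nu}\CR(v-w)\in H^{-1/2}(\Lambda_{\varepsilon})$; combined with Corollary~\ref{cor2.1}, which furnishes $|u_{0}|\leq 1$ near the origin and an explicit control on $\partial_{\nu}u_{0}$, and with $|\Lambda_{\varepsilon}|=O(\varepsilon)$, the arc contribution tends to zero in the same manner as in Lemma~\ref{lemma2.5}. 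For the radial segments, $\CR(v)\in L^{2}(\Gamma^{\pm})$ by trace, $\eta$ is bounded, $|u_{0}|\leq 1$, and $|\Gamma^{\pm}_{(0,\varepsilon)}|=\varepsilon$, so the integral also vanishes.

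The main technical point is the commutation of $\CR$ with both $\Delta_{\Bx'}$ and $\partial_{\nu}$ at merely $H^{1}$ regularity, which must be interpreted distributionally and justified by the compact support of $\psi$ and dominated convergence as in Lemma~\ref{Lemma 3.1}. Once this is in place, the argument is a direct transplantation of Lemma~\ref{lemma2.5} to the $\CR$-reduced two-dimensional profile.
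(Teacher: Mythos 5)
Your proposal is correct and follows essentially the same route as the paper: Green's identity on the punctured sector $D_{\varepsilon}$, reduction of the three-dimensional conductive transmission conditions to the identities $\CR(v-w)=0$ and $\partial_{\nu}\CR(v-w)=-\eta\,\CR(v)$ on $\Gamma^{\pm}$ (using that $\eta$ is independent of $x_{3}$ and that the lateral normal lies in the $\Bx'$-plane), followed by the limit $\varepsilon\to 0$ with the $\Lambda_{\varepsilon}$ and $\Gamma^{\pm}_{(0,\varepsilon)}$ contributions killed via Lemma~\ref{Lemma 3.1}, the trace theorem and the bound $|u_{0}|\leq 1$. No substantive differences from the paper's argument.
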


\begin{proof}
    Since $w\left(\Bx^{\prime}, x_{3}\right)=v\left(\Bx^{\prime}, x_{3}\right)$ when $\Bx^{\prime} \in \Gamma$ and $-L<x_{3}<L$, we have
    \begin{equation}
        \CR(w)\left(\Bx^{\prime}\right)=\CR(v)\left(\Bx^{\prime}\right) \text { on } \Gamma.\label{B1}
    \end{equation}
    Similarly, using the fact that $\eta$ is independent of $x_3$, we can obtain that
    \begin{equation}
        \partial_{\nu} \CR(v)\left(\Bx^{\prime}\right)+\eta\left(\Bx^{\prime}\right) \CR(v)\left(\Bx^{\prime}\right)=\partial_{\nu} \CR(w)\left(\Bx^{\prime}\right) \text { on } \Gamma.\label{B2}
    \end{equation}
    Therefore, by Green's formula, we have
    \begin{equation*}
        \begin{split}
            &\int_{D_{\varepsilon}} \Delta_{\Bx^{\prime}}\left(\CR(v)\left(\Bx^{\prime}\right)-\CR(w)\left(\Bx^{\prime}\right)\right) u_{0}\left(s \Bx^{\prime}\right) \mathrm{d} \Bx^{\prime} \\
        &=\int_{\partial D_{\varepsilon}}\left(u_{0}\left(s \Bx^{\prime}\right) \partial_{\nu} \CR(v-w)\left(\Bx^{\prime}\right)-\CR(v-w)\left(\Bx^{\prime}\right) \partial_{\nu} u_{0}\left(s \Bx^{\prime}\right)\right) \mathrm{d} \sigma \\
        &=\int_{\Lambda_{h}}\left(u_{0}\left(s \Bx^{\prime}\right) \partial_{\nu} \CR(v-w)\left(\Bx^{\prime}\right)-\CR(v-w)\left(\Bx^{\prime}\right) \partial_{\nu} u_{0}\left(s \Bx^{\prime}\right)\right) \mathrm{d} \sigma \\
        &\quad+\int_{\Lambda_{\varepsilon}}\left(u_{0}\left(s \Bx^{\prime}\right) \partial_{\nu} \CR(v-w)\left(\Bx^{\prime}\right)-\CR(v-w)\left(\Bx^{\prime}\right) \partial_{\nu} u_{0}\left(s \Bx^{\prime}\right)\right) \mathrm{d} \sigma \\
        &\quad-\int_{\Gamma_{(\varepsilon, h)}^{\pm}} \eta\left(\Bx^{\prime}\right) \CR(v)\left(\Bx^{\prime}\right) u_{0}\left(s \Bx^{\prime}\right) \mathrm{d} \sigma,
        \end{split}
    \end{equation*}
    $\text { where } \Lambda_{h}=S_{h} \cap \partial B_{h}, \Lambda_{\varepsilon}=S_{h} \cap \partial B_{\varepsilon} \text { and } \Gamma_{(\varepsilon, h)}^{\pm}=\Gamma^{\pm} \cap\left(B_{h} \backslash B_{\varepsilon}\right)$.

Since $v,w \in H^{1}\left(S_{h} \times(-L, L)\right),$ from Lemma \ref{Lemma 3.1} we know that $\CR(v-w) \in H^1(S_h)$, and it can be derived that
\begin{equation*}
    \lim _{\varepsilon \rightarrow 0} \int_{\Lambda_{c}}\left(u_{0}\left(s \Bx^{\prime}\right) \partial_{\nu} \CR(v-w)\left(\Bx^{\prime}\right)-\CR(v-w)\left(\Bx^{\prime}\right) \partial_{\nu} u_{0}\left(s \Bx^{\prime}\right)\right) \mathrm{d} \sigma=0
\end{equation*}
Since $v \in H^{1}\left(\left(S_{h} \cap B_{\varepsilon}\right) \times(-L, L)\right)$, from Lemma \ref{Lemma 3.1}, we also know $\CR(v)\left(\Bx^{\prime}\right) \in H^{1}\left(S_{h} \cap B_{\varepsilon}\right)$. Using the trace theorem, we see that $\CR(v)\left(\Bx^{\prime}\right) \in L^{2}\left(\Gamma_{(0, e)}^{\pm}\right) $ where  $\Gamma_{(0, c)}^{\pm}=\Gamma^{\pm} \cap B_{\varepsilon}$. For sufficiently small $\varepsilon$, we further see that $\left|u_{0}\left(s \Bx^{\prime}\right)\right| \leq 1$ and $\eta \in C^{\alpha}\left(\bar{\Gamma}_{h}^{\pm} \times[-M, M]\right)$. Hence, we have
\begin{equation*}
    \lim _{\varepsilon \rightarrow 0} \int_{\Gamma_{(0, c)}^{\pm}} \eta\left(\Bx^{\prime}\right) \CR(v)\left(\Bx^{\prime}\right) u_{0}\left(s \Bx^{\prime}\right) \mathrm{d} \sigma=0.
\end{equation*}

The proof is complete.
\end{proof}

\begin{lemma}\label{Lemma 3.6}
    Suppose that $\eta \in C^{\alpha}\left(\bar{\Gamma}_{h}^{\pm} \times [-M,M]\right)$ for $0<\alpha<1$ and $\eta=\eta(\Bx^{\prime})$ is independent of $x_3$, and $\theta_{M},\theta_{m}$ are defined in \eqref{2.1} and $\theta_{M}-\theta_{m}\neq \pi$. Define
    \begin{equation}
        I_{2}^{\pm}=\int_{\Gamma_{h}^{\pm}} \eta\left(\Bx^{\prime}\right) u_{0}\left(s \Bx^{\prime}\right) \CR\left(v_{j}\right)\left(\Bx^{\prime}\right) \mathrm{d} \sigma.
    \end{equation}
    Then it hods that
    \begin{equation}
        \begin{split}
            I_{2}^{-}=&2 \eta(\mathbf{0}) v_{j}(\mathbf{0}) s^{-1}\left(\mu\left(\theta_{m}\right)^{-2}-\mu\left(\theta_{m}\right)^{-2} e^{-\sqrt{s h} \mu\left(\theta_{m}\right)}-\mu\left(\theta_{m}\right)^{-1} \sqrt{s h} e^{-\sqrt{s h} \mu\left(\theta_{m}\right)}\right) C_{2}^{-} \\
        &+v_{j}(\mathbf{0}) \eta(\mathbf{0}) I_{21}^{-}+\eta(\mathbf{0}) I_{22}^{-}+I_{\eta}^{-},
        \end{split}\label{3I2-}
    \end{equation}
    and
    \begin{equation}
        \begin{split}
            I_{2}^{+}=&2 \eta(\mathbf{0}) v_{j}(\mathbf{0}) s^{-1}\left(\mu\left(\theta_{M}\right)^{-2}-\mu\left(\theta_{M}\right)^{-2} e^{-\sqrt{s h} \mu\left(\theta_{M}\right)}-\mu\left(\theta_{M}\right)^{-1} \sqrt{s h} e^{-\sqrt{s h} \mu\left(\theta_{M}\right)}\right) C_{2}^{+} \\
            &+v_{j}(\mathbf{0}) \eta(\mathbf{0}) I_{21}^{+}+\eta(\mathbf{0}) I_{22}^{+}+I_{\eta}^{+}.
        \end{split}\label{3I2+}
    \end{equation}
    where $C_{2}^{\pm}$ are positive constants, and
\begin{equation*}
    \begin{split}
        I_{21}^{-} \leq & \mathcal{O}\left(s^{-3}\right), \quad I_{22}^{-} \leq \mathcal{O}\left(\left\|g_{j}\right\|_{L^{2}\left(\mathbb{S}^{2}\right)} s^{-3}\right),\\
        \left|I_{\eta}^{-}\right| \leq &\|\eta\|_{C^{\alpha}}\left(v_{j}(\mathbf{0}) \mathcal{O}\left(s^{-1-\alpha}\right)+\mathcal{O}\left(\left\|g_{j}\right\|_{L^{2}\left(\mathbb{S}^{2}\right)} s^{-3-\alpha}\right)\right),\\
        I_{21}^{+} \leq & \mathcal{O}\left(s^{-3}\right), \quad I_{22}^{+} \leq \mathcal{O}\left(\left\|g_{j}\right\|_{L^{2}\left(\mathbb{S}^{2}\right)} s^{-3}\right),\\
        \left|I_{\eta}^{+}\right| \leq &\|\eta\|_{C^{\alpha}}\left(v_{j}(\mathbf{0}) \mathcal{O}\left(s^{-1-\alpha}\right)+\mathcal{O}\left(\left\|g_{j}\right\|_{L^{2}\left(\mathbb{S}^{2}\right)} s^{-3-\alpha}\right)\right).
    \end{split}
\end{equation*}
\end{lemma}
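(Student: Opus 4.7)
The plan is to mirror the strategy of Lemma~\ref{lemma2.6}, exploiting the one-dimensional nature of $\Gamma_h^{\pm}$ (parametrised by $r\in(0,h)$) after passing to the reduced function $\mathcal{R}(v_j)$. First, I would split the $\eta$ factor via Lemma~\ref{lemma2.4} as $\eta(\mathbf{x}')=\eta(\mathbf{0})+\delta\eta(\mathbf{x}')$ with $|\delta\eta(\mathbf{x}')|\leq \|\eta\|_{C^\alpha}|\mathbf{x}'|^\alpha$, and set
\begin{equation*}
  I_\eta^{\pm}:=\int_{\Gamma_h^{\pm}}\delta\eta(\mathbf{x}')\,u_0(s\mathbf{x}')\,\mathcal{R}(v_j)(\mathbf{x}')\,\mathrm{d}\sigma,
\end{equation*}
so that $I_2^{\pm}=\eta(\mathbf{0})\int_{\Gamma_h^{\pm}}u_0(s\mathbf{x}')\mathcal{R}(v_j)(\mathbf{x}')\,\mathrm{d}\sigma+I_\eta^{\pm}$. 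On $\Gamma_h^{-}$ one has $u_0(s\mathbf{x}')=e^{-\sqrt{sr}\,\mu(\theta_m)}$, reducing every integral to a one-variable Laplace-type integral in $r$.

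Next I would substitute the Jacobi--Anger expansion from Lemma~\ref{Lemma 3.3} into $\mathcal{R}(v_j)$ and invoke Lemma~\ref{Lemma 3.4} to get
\begin{equation*}
  \mathcal{R}(v_j)(\mathbf{x}')=v_j(\mathbf{0})\,\mathcal{R}(j_0)(\mathbf{x}')+\sum_{\ell=1}^\infty \gamma_{\ell j}\,\mathbf{i}^\ell (2\ell+1)\,\mathcal{R}(j_\ell)(\mathbf{x}').
\end{equation*}
The constant piece $C(\psi)$ in $\mathcal{R}(j_0)$ produces the main term
\begin{equation*}
  \eta(\mathbf{0})v_j(\mathbf{0})C(\psi)\int_0^h e^{-\sqrt{sr}\,\mu(\theta_m)}\,\mathrm{d}r,
\end{equation*}
and the elementary substitution $t=\sqrt{sr}\,\mu(\theta_m)$ yields the closed-form factor appearing in \eqref{3I2-}; this identifies $C_2^{-}=C(\psi)>0$.

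For the error terms I would collect the leftover $\ell$-independent tail of $\mathcal{R}(j_0)$ (carrying a factor $|\mathbf{x}'|^{2\ell}$ with $\ell\geq 1$) into $I_{21}^{-}$, and the full $\ell\geq 1$ sum, paired with the extra $|\mathbf{x}'|^2$ coming out of Lemma~\ref{Lemma 3.4}, into $I_{22}^{-}$. Using the elementary bound $\int_0^h r^p e^{-\sqrt{sr}\delta_W}\,\mathrm{d}r=\mathcal{O}(s^{-2p-2})$, the $r^2$-weighted integral in $I_{21}^{-}$ decays like $s^{-3}$; the extra $|\mathbf{x}'|^2$ in every $\mathcal{R}(j_\ell)$ gives the same $s^{-3}$ rate for $I_{22}^{-}$, with the prefactor $\|g_j\|_{L^2(\mathbb{S}^2)}$ arising from Cauchy--Schwarz applied to $\gamma_{\ell j}$ as in \eqref{eq:nn1}. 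Finally, $I_\eta^{\pm}$ is estimated by the same scheme with the extra weight $|\mathbf{x}'|^\alpha$: the leading $v_j(\mathbf{0})$-part gains an additional $s^{-\alpha}$ and the $\ell\geq 1$ sum behaves as $\mathcal{O}(\|g_j\|_{L^2(\mathbb{S}^2)}s^{-3-\alpha})$, exactly as claimed.

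The symmetric case $I_2^{+}$ is identical with $\theta_m$ replaced by $\theta_M$ and $C_2^{+}$ defined via the same $C(\psi)$. The main technical obstacle is keeping rigorous control over the infinite series in $\ell$ once the extra $|\mathbf{x}'|^2$ from $\mathcal{R}(j_\ell)$ is in play: one has to verify that the $\ell$-tail in Lemma~\ref{Lemma 3.4} is uniformly bounded for $|\mathbf{x}'|\leq h$ with $kh<1$, which allows interchange of summation and integration and a geometric control on the series — this is the step where the $|\mathbf{x}'|^2$ boost from the reduction operator is essential (without it the $\ell$-series contribution in 3D would not improve over the 2D rate). Once this bookkeeping is done, all remaining estimates reduce to the elementary Laplace-type integral bounds already used in Lemma~\ref{lemma2.6}.
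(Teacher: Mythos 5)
Your overall architecture is the paper's: split $\eta=\eta(\mathbf{0})+\delta\eta$ via Lemma \ref{lemma2.4}, expand $\mathcal{R}(v_j)$ through Lemmas \ref{Lemma 3.3} and \ref{Lemma 3.4}, and reduce everything to Laplace-type integrals in $r$. But there is one genuine gap, and it sits exactly where the three-dimensional case differs from Lemma \ref{lemma2.6}: the identification $C_2^{\pm}=C(\psi)$. After the dimension reduction, the tail of $\mathcal{R}(j_0)$ is $-C(\psi)\sum_{l\ge1}\frac{(-1)^lk^{2l}}{2^ll!(2l+1)!!}\left(|\mathbf{x}'|^2+a_{0,l}^2\right)^l$, which, contrary to what you write, does \emph{not} carry a factor $|\mathbf{x}'|^{2}$: at $|\mathbf{x}'|=0$ it equals $-C(\psi)\sum_{l\ge 1}\frac{(-1)^lk^{2l}}{2^ll!(2l+1)!!}a_{0,l}^{2l}$, which is nonzero in general. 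If you dump this whole tail into $I_{21}^-$, its $r$-independent part contributes a constant times $\int_0^he^{-\sqrt{sr}\mu(\theta_m)}\,\mathrm{d}r=\mathcal{O}(s^{-1})$ --- the same order as the main term --- so the claimed bound $I_{21}^-\le\mathcal{O}(s^{-3})$ fails and, downstream, the leading-order identity in Theorem \ref{Theorem 3.1} is corrupted. The paper's fix is to expand $\left(r^2+a_{0,l}^2\right)^l$ by the binomial theorem, absorb all the $r$-independent terms $a_{0,l}^{2l}$ into the leading coefficient, so that $C_2^{-}=C(\psi)\bigl[1-\sum_{l\ge1}\frac{(-1)^lk^{2l}}{(2l+1)!!}a_{0,l}^{2l}\bigr]$, put only the genuinely $r^{2i_1}$-weighted remainders ($i_1\ge1$) into $I_{21}^{-}$, and then prove $C_2^{-}>0$ by choosing $L$ with $kL<1$, which yields the two-sided bound \eqref{C2-}. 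That positivity, with its explicit bounds, is not cosmetic: it is precisely what the proof of Theorem \ref{Theorem 3.1} later uses to conclude $C_2^-\mu(\theta_m)^{-2}+C_2^+\mu(\theta_M)^{-2}\neq0$.

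A secondary point: your stated rate $\int_0^hr^pe^{-\sqrt{sr}\delta_W}\,\mathrm{d}r=\mathcal{O}(s^{-2p-2})$ is not correct; the substitution $t=\sqrt{sr}$ gives $\mathcal{O}(s^{-(p+1)})$. The conclusions you then draw ($s^{-3}$ for the $r^2$-weight, $s^{-1-\alpha}$ for the $r^\alpha$-weight, $s^{-3-\alpha}$ for $r^{2+\alpha}$) are consistent with the correct rate, so this reads as a slip rather than a conceptual error, but with the rate as written the main term itself would come out as $\mathcal{O}(s^{-2})$ instead of $s^{-1}$ and the bookkeeping would not close. The remaining parts of your argument --- the $|\mathbf{x}'|^2$ gain from $\mathcal{R}(j_\ell)$ giving $I_{22}^{\pm}=\mathcal{O}(\|g_j\|_{L^2(\mathbb{S}^2)}s^{-3})$, the bound on $\gamma_{\ell j}$ via Cauchy--Schwarz, the uniform control of the $\ell$-series under $k^2(h^2+L^2)<1$, and the $r^\alpha$-weighted estimates for $I_\eta^{\pm}$ --- match the paper's proof.
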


\begin{proof}
    Using Lemma \ref{lemma2.4}, we have
    \begin{equation*}
        I_{2}^{-}=\eta(\mathbf{0}) \int_{\Gamma_{h}^{-}} u_{0}\left(s \Bx^{\prime}\right) \CR\left(v_{j}\right)\left(\Bx^{\prime}\right) \mathrm{d} \sigma + \int_{\Gamma_{h}^{-}} \delta \eta\left(\Bx^{\prime}\right) u_{0}\left(s \Bx^{\prime}\right) \CR\left(v_{j}\right)\left(\Bx^{\prime}\right) \mathrm{d} \sigma.
    \end{equation*}
    Recall the definition in (\ref{mu}). Combining Lemmas \ref{Lemma 3.3} and \ref{Lemma 3.4}, we can obtain that
    \begin{equation*}
        \begin{split}
            &\int_{\Gamma_{h}^{-}} u_{0}\left(s \Bx^{\prime}\right) \CR\left(v_{j}\right)\left(\Bx^{\prime}\right) \mathrm{d} \sigma\\
            =&v_{j}(\mathbf{0}) \int_{\Gamma_{h}^{-}} u_{0}\left(s \Bx^{\prime}\right) \CR\left(j_{0}\right)\left(\Bx^{\prime}\right) \mathrm{d} \sigma+\sum_{\ell=1}^{\infty} \gamma_{\ell j} \mathbf{i}^{\ell}(2 \ell+1) \int_{\Gamma_{h}^{-}} u_{0}\left(s \Bx^{\prime}\right) \CR\left(j_{\ell}\right)\left(\Bx^{\prime}\right) \mathrm{d} \sigma.
        \end{split}
    \end{equation*}
    Using Newton's Binomial expansion (see also Lemma \ref{Lemma 3.4}), we have
    \begin{equation}
        \begin{split}
            &\int_{\Gamma_{h}^{-}} u_{0}\left(s \Bx^{\prime}\right) \CR\left(j_{0}\right)\left(\Bx^{\prime}\right) \mathrm{d} \sigma\\
            =&C(\psi) \int_{0}^{h}\left[1-\sum_{l=1}^{\infty} \frac{(-1)^{l} k^{2 l}}{(2 l+1) ! !}\left(r^{2}+a_{0, l}^{2}\right)^{l}\right] e^{-\sqrt{s r} \mu\left(\theta_{m}\right)} \mathrm{d} r\\
            =& C(\psi)\left[1-\sum_{l=1}^{\infty} \frac{(-1)^{l} k^{2 l}}{(2 l+1) ! !} a_{0, l}^{2 l}\right] \int_{0}^{h} e^{-\sqrt{s r} \mu\left(\theta_{m}\right)} \mathrm{d} r \\
            &-C(\psi) \sum_{l=1}^{\infty} \frac{(-1)^{l} k^{2 l}}{(2 l+1) ! !}\left(\sum_{i_{1}=1}^{l} C\left(l, i_{1}\right) a_{0, l}^{2\left(l-i_{1}\right)} \int_{0}^{h} r^{2 i_{1}} e^{-\sqrt{s r} \mu\left(\theta_{m}\right)} \mathrm{d} r\right)\\
            :=&2 s^{-1}\left(\mu\left(\theta_{m}\right)^{-2}-\mu\left(\theta_{m}\right)^{-2} e^{-\sqrt{s h} \mu\left(\theta_{m}\right)}-\mu\left(\theta_{m}\right)^{-1} \sqrt{s h} e^{-\sqrt{s h} \mu\left(\theta_{m}\right)}\right)C_{2}^{-} + I_{21}^{-},
        \end{split}\label{a}
    \end{equation}
    where $C(\psi)=\int_{-L}^{L} \psi\left(x_{3}\right) \mathrm{d} x_{3}>0$, $C\left(l, i_{1}\right)=\frac{l !}{i_{1} !\left(l-i_{1}\right) !}$ is the combinatorial number of order $l$ and $C_{2}^{-}=C(\psi)\left[1-\sum_{l=1}^{\infty} \frac{(-1)^{l} k^{2 l}}{(2 l+1) ! !} a_{0, l}^{2 l}\right]$. By choosing $L$ such that $kL<1$, we have that
    \begin{equation*}
        \left|\sum_{l=1}^{\infty} \frac{(-1)^{l} k^{2 l}}{(2 l+1) ! !} a_{0, l}^{2 l}\right| \leq \sum_{l=1}^{\infty}(k L)^{2 l}=\frac{(k L)^{2}}{1-(k L)^{2}}.
    \end{equation*}
    Therefore, we can deduce that
    \begin{equation}
        0<\frac{C(\psi)\left(1-2(k L)^{2}\right)}{1-(k L)^{2}} \leq C_{2}^{-} \leq \frac{C(\psi)}{1-(k L)^{2}}.\label{C2-}
    \end{equation}
    For $I_{21}^{-}$, choosing $h$ and $L$ such that $k^{2}(h^{2}+L^{2})<1$, we can deduce that
    \begin{equation}
        \begin{split}
            |I_{21}^{-}| & \leq|C(\psi)| \sum_{l=1}^{\infty} \frac{k^{2 l}}{(2 l+1) ! !} \sum_{i_{1}=1}^{l} C\left(l, i_{1}\right) h^{2\left(i_{1}-1\right)} L^{2\left(l-i_{1}\right)} \int_{0}^{h} r^{2} e^{-\sqrt{s r} \omega\left(\theta_{m}\right)} \mathrm{d} r \\
            &=|C(\psi)| \sum_{l=1}^{\infty} \frac{k^{2 l}}{(2 l+1) ! ! h^{2}} \sum_{i_{1}=1}^{l} C\left(l, i_{1}\right) h^{2 i_{1}} L^{2\left(l-i_{1}\right)} \int_{0}^{h} r^{2} e^{-\sqrt{s r} \omega\left(\theta_{m}\right)} \mathrm{d} r \\
            &=|C(\psi)| \sum_{l=1}^{\infty} \frac{k^{2 l}}{(2 l+1) ! ! h^{2}}\left(\left(h^{2}+L^{2}\right)^{l}-L^{2 l}\right) \int_{0}^{h} r^{2} e^{-\sqrt{s r} \omega\left(\theta_{m}\right)} \mathrm{d} r \\
            & \leq 2 L\|\psi\|_{\infty} \sum_{l=1}^{\infty} \frac{k^{2 l}}{(2 l+1) ! ! h^{2}}\left(\left(h^{2}+L^{2}\right)^{l}-L^{2 l}\right) \mathcal{O}\left(s^{-3}\right) \\
            &=\mathcal{O}\left(s^{-3}\right).
        \end{split}\label{b}
    \end{equation}
    Taking
    \begin{equation*}
        I_{22}^{-}=\sum_{\ell=1}^{\infty} \gamma_{\ell j} \mathbf{i}^{\ell}(2 \ell+1) \int_{\Gamma_{h}^{-}} u_{0}\left(s \Bx^{\prime}\right) \CR\left(j_{\ell}\right)\left(\Bx^{\prime}\right) \mathrm{d} \sigma,
    \end{equation*}
we then have
    \begin{equation}
        \begin{split}
            \left|I_{22}^{-}\right| \leq & C_{1}(\psi)\left\|g_{j}\right\|_{L^{2}\left(\mathbb{S}^{2}\right)} \\
        & \cdot \sum_{\ell=1}^{\infty} \int_{0}^{h} r^{2} e^{-\sqrt{s r \omega}\left(\theta_{m}\right)} \frac{k^{\ell}\left(|r|^{2}+a_{\ell}^{2}\right)^{(\ell-1) / 2}}{(2 \ell-1) ! !}\left|1-\sum_{l=1}^{\infty} \frac{(-1)^{l} k^{2 l}\left(|r|^{2}+a_{\ell, l}^{2}\right)^{l}}{2^{l} l ! N_{\ell, l}}\right| \mathrm{d} r \\
        =& C_{1}(\psi)\left\|g_{j}\right\|_{L^{2}\left(\mathbb{S}^{2}\right)} \sum_{\ell=1}^{\infty} \frac{k^{\ell}\left(\left|\beta_{\ell}\right|^{2}+a_{\ell}^{2}\right)^{(\ell-1) / 2}}{(2 \ell-1) ! !}\left|1-\sum_{l=1}^{\infty} \frac{(-1)^{l} k^{2 l}\left(\left|\beta_{\ell, l}\right|^{2}+a_{\ell, l}^{2}\right)^{l}}{2^{l} l ! N_{\ell, l}}\right| \\
        & \cdot \int_{0}^{h} r^{2} e^{-\sqrt{s r} \omega\left(\theta_{m}\right)} \mathrm{d} r \\
        =& \mathcal{O}\left(\left\|g_{j}\right\|_{L^{2}\left(\mathbb{S}^{2}\right)} s^{-3}\right),
        \end{split}\label{c}
    \end{equation}
    where $C_1(\psi)$ is defined in Lemma \ref{Lemma 3.4}.
    Taking
    \begin{equation*}
        I_{\eta}^{-}=\int_{\Gamma_{h}^{-}} \delta \eta\left(\Bx^{\prime}\right) u_{0}\left(s \Bx^{\prime}\right) \CR\left(v_{j}\right)\left(\Bx^{\prime}\right) \mathrm{d} \sigma,
    \end{equation*}
   and by Lemmas \ref{Lemma 3.3} and \ref{Lemma 3.4}, we can obtain that
    \begin{equation*}
        \begin{split}
            I_{\eta}^-&=v_j(0)\int_{\Gamma_{h}^{-}} \delta \eta\left(\Bx^{\prime}\right) u_{0}\left(s \Bx^{\prime}\right) \CR\left(j_{0}\right)\left(\Bx^{\prime}\right) \mathrm{d} \sigma \\
        &\quad+ \sum_{\ell=1}^{\infty} \gamma_{\ell j} \mathbf{i}^{\ell}(2 \ell+1) \int_{\Gamma_{h}^{-}} \delta \eta\left(\Bx^{\prime}\right) u_{0}\left(s \Bx^{\prime}\right) \CR\left(j_{\ell}\right)\left(\Bx^{\prime}\right) \mathrm{d} \sigma.
        \end{split}
    \end{equation*}
    Furthermore, we have
    \begin{equation}
        \begin{split}
            &\left|\int_{\Gamma_{h}^{-}} \delta \eta\left(\Bx^{\prime}\right) u_{0}\left(s \Bx^{\prime}\right) \CR\left(j_{0}\right)\left(\Bx^{\prime}\right) \mathrm{d} \sigma\right|\\
             & \leq|C(\psi)|\|\eta\|_{C^{\alpha}} \int_{0}^{h} r^{\alpha}\left|1-\sum_{l=1}^{\infty} \frac{(-1)^{l} k^{2 l}}{(2 l+1) ! !}\left(r^{2}+a_{0, l}^{2}\right)^{l}\right| e^{-\sqrt{s r} \omega\left(\theta_{m}\right)} \mathrm{d} r \\
            &=2 L\|\psi\|_{\infty}\|\eta\|_{C^{\alpha}}\left|1-\sum_{l=1}^{\infty} \frac{(-1)^{l} k^{2 l}}{(2 l+1) ! !}\left(\beta_{0, l}^{2}+a_{0, l}^{2}\right)^{l}\right| \int_{0}^{h} r^{\alpha} e^{-\sqrt{s r} \omega\left(\theta_{m}\right)} \mathrm{d} r\\
            &\leq \mathcal{O}\left(s^{-\alpha-1}\right),
            \end{split}\label{d}
    \end{equation}
    where $\beta_{0, l} \in[0, h]$ such that $k^{2}\left(\beta_{0, l}^{2}+a_{0, l}^{2}\right) \leq k^{2}\left(h^{2}+L^{2}\right)<1$ for sufficiently small $h$ and $L$. Next, we can deduce that
    \begin{equation}
        \begin{split}
            &\left|\sum_{\ell=1}^{\infty} \gamma_{\ell j} \mathbf{i}^{\ell}(2 \ell+1) \int_{\Gamma_{h}^{-}} \delta \eta\left(\Bx^{\prime}\right) u_{0}\left(s \Bx^{\prime}\right) \CR\left(j_{\ell}\right)\left(\Bx^{\prime}\right) \mathrm{d} \sigma\right|\\
            \leq & C_{1}(\psi)\|\eta\|_{C^{\alpha}} \\
            & \cdot \sum_{\ell=1}^{\infty}\left|\gamma_{\ell j}\right| \int_{0}^{h} r^{\alpha} \frac{k^{\ell}\left(r^{2}+a_{\ell}^{2}\right)^{(\ell-1) / 2}}{(2 \ell-1) ! !}\left|1-\sum_{l=1}^{\infty} \frac{k^{2 l}\left(r^{2}+a_{\ell, l}^{2}\right)^{l}}{2^{l} l ! N_{\ell, l}}\right| r^{2} e^{-\sqrt{s r} \omega\left(\theta_{m}\right)} \mathrm{d} r \\
            \leq & 2C_{1}(\psi)\|\eta\|_{C^{\alpha}}\left\|g_{j}\right\|_{L^{2}\left(\mathbb{S}^{2}\right)} \int_{0}^{h} r^{2+\alpha} e^{-\sqrt{s r} \omega\left(\theta_{m}\right)} \mathrm{d} r \\
            & \cdot \sum_{\ell=1}^{\infty} \frac{k^{\ell}\left(\beta_{\ell}^{2}+a_{\ell}^{2}\right)^{(\ell-1) / 2}}{(2 \ell-1) ! !}\left|1-\sum_{l=1}^{\infty} \frac{k^{2 l}\left(\beta_{\ell, l}^{2}+a_{\ell, l}^{2}\right)^{l}}{2^{l} l ! N_{\ell, l}}\right| \\
            =& \mathcal{O}\left(\left\|g_{j}\right\|_{L^{2}\left(\mathbb{S}^{2}\right)} s^{-\alpha-3}\right),
        \end{split}\label{e}
    \end{equation}
where we have used the estimates $k^{2}\left(\beta_{\ell}^{2}+a_{\ell}^{2}\right) \leq k^{2}\left(h^{2}+L^{2}\right)<1$, $k^{2}\left(\beta_{\ell, l}^{2}+a_{\ell, l}^{2}\right) \leq$ $k^{2}\left(h^{2}+L^{2}\right)<1$ for sufficiently small $h$ and $L,$ as well as the estimate
\begin{equation*}
    \left|\gamma_{\ell j}\right|=\Big|\int_{\mathbb{S}^{2}} g_{j}(\mathbf{d}) P_{\ell}(\hat{\Bx}|_{\Gamma_h^-}\cdot\mathbf{d}) \mathrm{d} \sigma(\mathbf{d})\Big| \leq \sqrt{2\pi}\sqrt{\frac{2}{2l+1}}\left\|g_{j}\right\|_{L^{2}\left(\mathbb{S}^{2}\right)}\leq 2\sqrt{\pi}\left\|g_{j}\right\|_{L^{2}\left(\mathbb{S}^{2}\right)}.
\end{equation*}

Finally, by combining (\ref{a}), (\ref{b}), (\ref{c}), (\ref{d}) and (\ref{e}), we can prove (\ref{3I2-}). Using a similar argument, one can prove (\ref{3I2+}).

The proof is complete.
\end{proof}

\begin{lemma}\label{lemma3.7}
    Let $\eta \in C^{\alpha}\left(\bar{\Gamma}_{h}^{\pm} \times [-M,M]\right)$ for $0<\alpha<1$ and $\eta=\eta(\Bx^{\prime})$ be independent of $x_3$, $\theta_{M},\theta_{m}$ be defined in (\ref{2.1}) and $\theta_{M}-\theta_{m}\neq \pi$. Set
    \begin{equation}
        \xi_{j}^{\pm}(s) =\int_{\Gamma_{h}^{\pm}} \eta\left(\Bx^{\prime}\right) u_{0}\left(s \Bx^{\prime}\right) \CR\left(v\left(\Bx^{\prime}, x_3\right)-v_{j}\left(\Bx^{\prime}, x_3\right)\right) \mathrm{d} \sigma.
    \end{equation}
    Then it holds that
    \begin{align*}
        & \left|\xi_{j}^{\pm}(s)\right|  \\
        \leq & C\|\psi\|_{\infty}\left(|\eta(\mathbf{0})|\left\|u_{0}\left(s \Bx^{\prime}\right)\right\|_{L^{2}\left(S_{h}\right)}+\|\eta\|_{C^{\alpha}}\left\|\left|\Bx^{\prime}\right|^{\alpha} u_{0}\left(s \Bx^{\prime}\right)\right\|_{L^{2}\left(S_{h}\right)}\right) \left\|v-v_{j}\right\|_{H^{1}\left(S_{h} \times(-L, L)\right)},
    \end{align*}
    where $C$ is a positive constant.
\end{lemma}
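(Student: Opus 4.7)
The plan is to mirror the two-dimensional argument used in Lemma~\ref{lemma2.7}, replacing $v-v_{j}$ by its dimension-reduced counterpart $\CR(v-v_{j})$ on the planar boundary ray $\Gamma_{h}^{\pm}$ and controlling everything through the mapping property of $\CR$ established in Lemma~\ref{Lemma 3.1}. First, I would apply Lemma~\ref{lemma2.4} to decompose $\eta(\Bx')=\eta(\mathbf{0})+\delta\eta(\Bx')$ with $|\delta\eta(\Bx')|\le\|\eta\|_{C^{\alpha}}|\Bx'|^{\alpha}$. This splits $\xi_{j}^{\pm}(s)$ into two pieces,
\begin{equation*}
\xi_{j}^{\pm}(s)=\eta(\mathbf{0})\int_{\Gamma_{h}^{\pm}}u_{0}(s\Bx')\,\CR(v-v_{j})(\Bx')\,\mathrm{d}\sigma+\int_{\Gamma_{h}^{\pm}}\delta\eta(\Bx')u_{0}(s\Bx')\,\CR(v-v_{j})(\Bx')\,\mathrm{d}\sigma,
\end{equation*}
with the factor $|\Bx'|^{\alpha}$ absorbed into $u_{0}$ in the second piece.

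Next, for each of the two boundary integrals, I would use the standard duality pairing $(H^{1/2},H^{-1/2})$ on $\Gamma_{h}^{\pm}$ together with the trace theorem, exactly as in the proof of Lemma~\ref{lemma2.7}. This yields
\begin{equation*}
\left|\int_{\Gamma_{h}^{\pm}}u_{0}(s\Bx')\,\CR(v-v_{j})(\Bx')\,\mathrm{d}\sigma\right|\le\|\CR(v-v_{j})\|_{H^{1/2}(\Gamma_{h}^{\pm})}\|u_{0}(s\Bx')\|_{H^{-1/2}(\Gamma_{h}^{\pm})}\le C\|\CR(v-v_{j})\|_{H^{1}(S_{h})}\|u_{0}(s\Bx')\|_{L^{2}(S_{h})},
\end{equation*}
and an analogous bound with $|\Bx'|^{\alpha}u_{0}(s\Bx')$ replacing $u_{0}(s\Bx')$ for the $\delta\eta$ piece. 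Here I am using the planar trace theorem $H^{1}(S_{h})\hookrightarrow H^{1/2}(\Gamma_{h}^{\pm})$ and the embedding $L^{2}(S_{h})\hookrightarrow H^{-1/2}(\Gamma_{h}^{\pm})$ (interpreted via duality with the trace).

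Finally, I would invoke Lemma~\ref{Lemma 3.1} to reduce the two-dimensional $H^{1}$ norm of $\CR(v-v_{j})$ to a three-dimensional $H^{1}$ norm of $v-v_{j}$: the proof of that lemma gives the quantitative bound $\|\CR(g)\|_{H^{1}(W)}\le\|\psi\|_{\infty}\|g\|_{H^{1}(W\times(-M,M))}$, localized here to $S_{h}\times(-L,L)$. Combining with the two estimates above produces exactly the claimed inequality. The only mild subtlety — which is the main thing to get right rather than a deep obstacle — is keeping track of the two distinct bounding factors $\|u_{0}(s\Bx')\|_{L^{2}(S_{h})}$ and $\||\Bx'|^{\alpha}u_{0}(s\Bx')\|_{L^{2}(S_{h})}$, both of which are already quantified explicitly in Corollary~\ref{cor2.2}; no further nontrivial analysis is required.
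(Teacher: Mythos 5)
Your proposal is correct and follows essentially the same route as the paper's proof: decompose $\eta=\eta(\mathbf{0})+\delta\eta$ via Lemma~\ref{lemma2.4}, estimate the two boundary integrals through the $(H^{1/2},H^{-1/2})$ duality pairing and the trace theorem exactly as in Lemma~\ref{lemma2.7}, and then pass from $\|\CR(v-v_j)\|_{H^1(S_h)}$ to $\|\psi\|_\infty\|v-v_j\|_{H^1(S_h\times(-L,L))}$ using the boundedness of $\CR$ from Lemma~\ref{Lemma 3.1}. No substantive differences from the paper's argument.
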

\begin{proof}
    Using Cauchy-Schwarz inequality and the trace theorem, we can deduce as follows
\begin{equation*}
    \begin{split}
    &    \left|\xi_{j}^{\pm}(s)\right| \leq |\eta(\mathbf{0})| \int_{\Gamma_{h}^{\pm}}\left|u_{0}\left(s \Bx^{\prime}\right) \| \CR\left(v\left(\Bx^{\prime}, x_3\right)-v_{j}\left(\Bx^{\prime}, x_3\right)\right)\right| \mathrm{d} \sigma \\
        &+\|\eta\|_{C^{\alpha}} \int_{\Gamma_{h}^{\pm}}\left|\Bx^{\prime}\right|^{\alpha}\left|u_{0}\left(s \Bx^{\prime}\right) \| \CR\left(v\left(\Bx^{\prime}, x_3\right)-v_{j}\left(\Bx^{\prime}, x_3\right)\right)\right| \mathrm{d} \sigma \\
        \leq &|\eta(\mathbf{0})|\left\|\CR\left(v-v_{j}\right)\right\|_{H^{1 / 2}\left(\Gamma_{h}^{\pm}\right)}\left\|u_{0}\left(s \Bx^{\prime}\right)\right\|_{H^{-1 / 2}\left(\Gamma_{h}^{\pm}\right)} \\
        &+\left.\|\eta\|_{C^{\alpha}}\left\|\CR\left(v-v_{j}\right)\right\|_{H^{1 / 2}\left(\Gamma_{h}^{\pm}\right)}\|\| \Bx^{\prime}\right|^{\alpha} u_{0}\left(s \Bx^{\prime}\right) \|_{H^{-1 / 2}\left(\Gamma_{h}^{\pm}\right)} \\
        \leq & | \eta(\mathbf{0})\left\|\CR\left(v-v_{j}\right)\right\|_{H^{1}\left(S_{h}\right)}\left\|u_{0}\left(s \Bx^{\prime}\right)\right\|_{L^{2}\left(S_{h}\right)} \\
        &+\left.\|\eta\|_{C^{\alpha}}\left\|\CR\left(v-v_{j}\right)\right\|_{H^{1}\left(S_{h}\right)}\|\| \Bx^{\prime}\right|^{\alpha} u_{0}\left(s \Bx^{\prime}\right) \|_{L^{2}\left(S_{h}\right)} \\
        \leq & C\|\psi\|_{\infty}\left\|v-v_{j}\right\|_{H^{1}\left(S_{h} \times(-L, L)\right)}\left(|\eta(\mathbf{0})|\left\|u_{0}\left(s \Bx^{\prime}\right)\right\|_{L^{2}\left(S_{h}\right)}+\|\eta\|_{C^{\alpha}}\left\|\left|\Bx^{\prime}\right|^{\alpha} u_{0}\left(s \Bx^{\prime}\right)\right\|_{L^{2}\left(S_{h}\right)}\right),
        \end{split}
\end{equation*}
which readily completes the proof.
\end{proof}

We are now in a position of to the vanishing properties of the conductive transmission eigenfunctions $(v,w)$ in the three-dimensional case, and we have the following theorem.

\begin{thm}\label{Theorem 3.1}
    Let $v, w \in H^{1}(W \times(-M, M))$ be a pair of eigenfunctions to \eqref{3DE} associated with $k\in \RR_{+}$, where $W \subset \mathbb{R}^{2}$ is defined in \eqref{2.1} and $M>0.$ For any fixed $x_{3}^{c} \in(-M, M)$ and $L>0$ defined in Definition \ref{Definition 3.1}, we suppose that $L$ is sufficiently small such that $\left(x_{3}^{c}-L, x_{3}^{c}+L\right) \subset(-M, M)$. Moreover, there exists a sufficiently small neighbourhood $S_{h}$ of $\Bx_{c} \in \mathbb{R}^{2}$ such that $q w \in C^{\alpha}\left(\bar{S}_{h} \times[-M, M]\right)$ and $\eta \in C^{\alpha}\left(\bar{\Gamma}_{h}^{\pm} \times[-M, M]\right)$ for $0<\alpha<1$, where $q:=1+V$. If the following conditions are fulfilled:
 \begin{enumerate}
\item[(a)] the transmission eigenfunction $v$ can be approximated in $H^{1}\left(S_{h} \times(-M, M)\right) b y$
the Herglotz functions $v_{j}, j=1,2, \ldots,$ with kernels $g_{j}$ satisfying
\begin{equation}
    \left\|v-v_{j}\right\|_{H^{1}\left(S_{h} \times(-M, M)\right)} \leq j^{-\Upsilon}, \quad\left\|g_{j}\right\|_{L^{2}\left(\mathbb{S}^{2}\right)} \leq C j^{\varrho},\label{3.3}
\end{equation}
for some constants $C,\varrho \text{ and }\Upsilon$ with $C>0, \Upsilon>0 \text { and } \varrho<(1+\alpha)\Upsilon$;
\item[(b)] the function $\eta=\eta\left(\Bx^{\prime}\right)$ is independent of $x_{3}$ and
\begin{equation}
    \eta\left(\Bx_{c}\right) \neq 0; \label{3.4}
\end{equation}
\item[(c)] the angles $\theta_{m}$ and $\theta_{M}$ of the sector $W$ satisfy
\begin{equation}
    -\pi<\theta_{m}<\theta_{M}<\pi \text { and } \theta_{M}-\theta_{m} \neq \pi; \label{3.5}
\end{equation}
\end{enumerate}
then for every edge point $\left(\Bx_{c}, x_3^{c}\right) \in \mathbb{R}^{3}$ of $W \times(-M, M)$ where $x_3^{c} \in(-M, M),$ one has
\[
\lim _{\rho \rightarrow+0} \frac{1}{m\left(B\left(\left(\Bx_{c}, x_3^{c}\right), \rho\right)\cap (W\times (-M,M))\right)} \int_{B\left(\left(\Bx_{c}, x_3\right), \rho\right)\cap (W\times (-M,M))}|v(\Bx)| \mathrm{d} \Bx=0.
\]
\end{thm}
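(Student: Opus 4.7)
The plan is to follow the strategy of Theorem~\ref{Theorem 2.1} after first reducing the three-dimensional problem to a two-dimensional one via the dimension reduction operator $\mathcal{R}$ of Definition~\ref{Definition 3.1}. Without loss of generality, translate so that $\mathbf{x}_c=\mathbf{0}$ and $x_3^c=0$, and pick $\psi\in C_0^\infty((-L,L))$ as in the definition. Since $\mathcal{R}$ commutes with $\Delta_{\mathbf{x}'}$ and $v,w$ solve the Helmholtz equations in three dimensions, integrating by parts twice in $x_3$ yields
\[
\Delta_{\mathbf{x}'}\mathcal{R}(v-w) = -k^2\mathcal{R}(v)+k^2\mathcal{R}(qw) - \tilde{\mathcal{R}}(v-w),
\]
where $\tilde{\mathcal{R}}$ denotes the dimension-reduction operator with weight $\psi''$ in place of $\psi$.

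Next I would insert the CGO solution $u_0(s\mathbf{x}')$ and apply Green's identity on $D_\varepsilon=S_h\setminus B_\varepsilon$, sending $\varepsilon\to 0$ via Lemma~\ref{Lemma 3.5} and writing $v=v_j+(v-v_j)$ in the bulk source. This produces an integral identity of the form
\[
I_1+\Delta_j(s)+\tilde{I}(s) = I_3-(I_2^{+}+I_2^{-})-(\xi_j^{+}(s)+\xi_j^{-}(s)),
\]
where $I_1,I_3,I_2^{\pm},\Delta_j,\xi_j^{\pm}$ are the three-dimensional analogues of the quantities in the proof of Theorem~\ref{Theorem 2.1} (with $v_j,qw$ replaced by their $\mathcal{R}$-reductions) and the extra term $\tilde{I}(s)=-\int_{S_h}u_0(s\mathbf{x}')\tilde{\mathcal{R}}(v-w)(\mathbf{x}')\,d\mathbf{x}'$ encodes the $\partial_{x_3}^2$ contribution. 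I would then expand $\mathcal{R}(v_j)$ through Lemmas~\ref{Lemma 3.3}--\ref{Lemma 3.4} to separate the leading piece $v_j(\mathbf{0})\mathcal{R}(j_0)$ from the tail, the crucial point being that $\mathcal{R}(j_\ell)$ for $\ell\geq 1$ carries an extra factor $|\mathbf{x}'|^2$; this is the source of the sharper decay rates $\mathcal{O}(\|g_j\|_{L^2(\mathbb{S}^2)}s^{-3})$ and $\mathcal{O}(\|g_j\|_{L^2(\mathbb{S}^2)}s^{-3-\alpha})$ recorded in Lemma~\ref{Lemma 3.6}.

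After multiplying the identity by $s$, I would take $s=j^\beta$ with $\beta\in(\varrho/(1+\alpha),\Upsilon)$, an interval that is non-empty precisely under the hypothesis $\varrho<(1+\alpha)\Upsilon$. Letting $j\to\infty$, $I_3$ decays exponentially by Corollary~\ref{cor2.2}; $\Delta_j(s)$ and $\tilde{I}(s)$ are handled by Cauchy--Schwarz together with the exponential decay of $\|u_0(s\mathbf{x}')\|_{L^2(S_h)}$ and the uniform bound $\|\tilde{\mathcal{R}}(v-w)\|_{L^2(S_h)}\leq C\|\psi''\|_\infty\|v-w\|_{L^2(S_h\times(-L,L))}$; $\xi_j^\pm$ is disposed of by Lemma~\ref{lemma3.7} combined with Corollary~\ref{cor2.2}; the $I_{21}^\pm,I_{22}^\pm,I_\eta^\pm$ contributions are bounded by Lemma~\ref{Lemma 3.6}; and the H\"older remainder $\int_{S_h}\delta f_{1j}u_0\,d\mathbf{x}'$ is of order $\mathcal{O}(j^\varrho s^{-\alpha-2})$ as in (\ref{df1j}), which after multiplication by $s$ becomes $\mathcal{O}(j^{\varrho-(1+\alpha)\beta})$ and vanishes by the choice of $\beta$. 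What survives on the left is the main piece
\[
2\eta(\mathbf{0})v_j(\mathbf{0})\bigl[\mu(\theta_m)^{-2}+\mu(\theta_M)^{-2}\bigr]C_2,
\]
with $C_2>0$ by (\ref{C2-}); invoking (\ref{3.4}) and (\ref{3.5}) then forces $\lim_{j\to\infty}v_j(\mathbf{0})=0$.

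Finally I would convert $\lim_{j\to\infty}v_j(\mathbf{0})=0$ into the averaged vanishing of $v$ at $(\mathbf{x}_c,x_3^c)$ by the triangle-inequality argument from the end of the proof of Theorem~\ref{Theorem 2.1}, using that $v_j(\mathbf{0})=\int_{\mathbb{S}^2}g_j\,d\sigma$ is the limiting averaged value of $v_j$ and that $v_j\to v$ in $H^1(S_h\times(-M,M))$, hence in $L^1$ on every ball by Cauchy--Schwarz. The principal technical obstacle is the simultaneous balancing of $j^\varrho$ (Herglotz kernel growth), $j^{-\Upsilon}$ (approximation error), and $s=j^\beta$; the enlarged range $\varrho<(1+\alpha)\Upsilon$ in three dimensions (compared with $\varrho<\Upsilon$ in two dimensions) is available precisely because the factor $|\mathbf{x}'|^2$ in $\mathcal{R}(j_\ell)$ for $\ell\geq 1$ provides an extra $s^{-1}$ of decay throughout the boundary and bulk estimates. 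A secondary care point is verifying that $\tilde{I}(s)$ does not interact with the leading $v_j(\mathbf{0})$ term, which follows because $\|u_0(s\mathbf{x}')\|_{L^2(S_h)}$ decays exponentially in $\sqrt{s}$ independently of $j$.
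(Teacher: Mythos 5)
Your overall architecture matches the paper's: dimension reduction via $\mathcal{R}$, insertion of the CGO solution $u_0(s\mathbf{x}')$, Green's identity on $D_\varepsilon$ via Lemma~\ref{Lemma 3.5}, expansion of $\mathcal{R}(v_j)$ through Lemmas~\ref{Lemma 3.3}--\ref{Lemma 3.4}, the choice $s=j^\beta$ with $\beta\in(\varrho/(1+\alpha),\Upsilon)$, and your explanation of why the exponent range relaxes to $\varrho<(1+\alpha)\Upsilon$ (the extra $|\mathbf{x}'|^2$ in $\mathcal{R}(j_\ell)$, $\ell\ge 1$) is the right one. However, there are two concrete gaps. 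First, your treatment of the $\psi''$-term $\tilde I(s)=-\int_{S_h}u_0(s\mathbf{x}')\tilde{\mathcal{R}}(v-w)\,\mathrm{d}\mathbf{x}'$ does not close. You invoke ``exponential decay of $\|u_0(s\mathbf{x}')\|_{L^2(S_h)}$'', but that norm is \emph{not} exponentially small: a direct computation (substituting $t=\sqrt{sr}$) shows $\|u_0(s\cdot)\|_{L^2(S_h)}\asymp s^{-1}$; the factor $e^{-2\sqrt{s\Theta}\delta_W}$ in Corollary~\ref{cor2.2} involves a mean-value point $\Theta=\Theta(s)$ that degenerates to $0$ as $s\to\infty$. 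Since $\tilde I(s)$ carries no $j^{-\Upsilon}$ factor (unlike $\Delta_j(s)$ and $\xi_j^\pm$, where that factor does all the work), plain Cauchy--Schwarz gives only $s|\tilde I(s)|=O(1)$, which does not vanish and would contaminate the leading term. The paper avoids this by folding the term into $I_1$ as $F_1(\mathbf{x}')=\int\psi''(v-w)\,\mathrm{d}x_3$, asserting $F_1\in C^\alpha(\bar S_h)$ (via Lemma~\ref{Lemma 3.1}) and expanding $F_1=F_1(\mathbf{0})+\delta F_1$ so that both pieces contribute $O(s^{-2})$ and $O(s^{-2-\alpha})$ as in \eqref{3I1}--\eqref{I1N}. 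If you prefer to avoid the H\"older claim, you would at least need to upgrade Cauchy--Schwarz to an $L^p$--$L^{p'}$ pairing with $p'<2$ (using $H^1(W)\hookrightarrow L^p(W)$ for all $p<\infty$), which yields $\|u_0(s\cdot)\|_{L^{p'}(S_h)}=O(s^{-2/p'})=o(s^{-1})$; as written, the $p=2$ case sits exactly at the failing borderline.

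Second, the surviving leading term is not $2\eta(\mathbf{0})v_j(\mathbf{0})\bigl[\mu(\theta_m)^{-2}+\mu(\theta_M)^{-2}\bigr]C_2$ with a single constant. The mean-value points $a_{0,l}$ produced by Lemma~\ref{Lemma 3.4} differ on $\Gamma_h^-$ and $\Gamma_h^+$, so Lemma~\ref{Lemma 3.6} yields two \emph{distinct} positive constants $C_2^-$ and $C_2^+$, and the quantity that must be shown nonzero is $C_2^-\mu(\theta_m)^{-2}+C_2^+\mu(\theta_M)^{-2}$. The clean 2D identity \eqref{2.50} no longer applies: with unequal positive weights, $C_2^+\cos\theta_m+C_2^-\cos\theta_M$ and $C_2^+\sin\theta_m+C_2^-\sin\theta_M$ could a priori vanish simultaneously. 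The paper rules this out by the two-sided bounds \eqref{C2-}, a choice of $L$ with $kL<\sqrt{\varepsilon/2}$ making $C_2^\pm$ sufficiently close to $C(\psi)$, and a sign case analysis culminating in \eqref{3.44}. This step is the genuinely new difficulty of the three-dimensional case relative to Theorem~\ref{Theorem 2.1}, and your proposal skips it.
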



\begin{proof}
 For an edge point $\left(\Bx_{c}, x_{3}^{c}\right) \in W \times(-M, M),$ without loss of generality, we assume that the vertex $\Bx_{c}$ of the sector $W \subset \mathbb{R}^{2}$ is located at the origin of $\mathbb{R}^{2}$ and $x_{3}^{c}=0$. By direct calculations, we have
 \begin{equation}
     \begin{split}
        \Delta_{\Bx^{\prime}} \CR(v)\left(\Bx^{\prime}\right)&=\Delta_{\Bx^{\prime}} \int_{-L}^{L} \psi(x_3)v(\Bx^{\prime},x_n) \mathrm{d}x_3\\
        &=\int_{-L}^{L} \psi(x_3) \left(-k^{2}v(\Bx^{\prime},x_n)-\partial_{x_3}^{2}v(\Bx^{\prime},x_n)\right) \mathrm{d}x_3\\
        &=-\int_{-L}^{L} \psi(x_3)\partial_{x_3}^{2}v(\Bx^{\prime},x_n)\mathrm{d}x_3 - k^{2}\CR(v)(\Bx^{\prime})\\
        &=\int_{-L}^{L} \psi^{\prime \prime}\left(x_3\right) v\left(\Bx^{\prime}, x_3\right) d x_3-k^{2} \CR(v)\left(\Bx^{\prime}\right).
     \end{split}
 \end{equation}
 Similarly, we can obtain that
 \begin{equation}
    \Delta_{\Bx^{\prime}} \CR(w)\left(\Bx^{\prime}\right) =\int_{-L}^{L} \psi^{\prime \prime}\left(x_3\right) w\left(\Bx^{\prime}, x_3\right) d x_3-k^{2} \CR(q w)\left(\Bx^{\prime}\right).
 \end{equation}
Therefore, we have
\begin{equation}
    \begin{split}
        &\Delta_{\Bx^{\prime}} \CR(v)\left(\Bx^{\prime}\right) - \Delta_{\Bx^{\prime}} \CR(w)\left(\Bx^{\prime}\right) \\
        =& \int_{-L}^{L} \psi^{\prime \prime}\left(x_3\right)\left(v\left(\Bx^{\prime}, x_3\right)-w\left(\Bx^{\prime}, x_3\right)\right) \mathrm{d} x_3+k^{2} \CR(q w)\left(\Bx^{\prime}\right)-k^{2} \CR(v)\left(\Bx^{\prime}\right)\\
        :=& F_{1}\left(\Bx^{\prime}\right)+ F_{2}\left(\Bx^{\prime}\right)+ F_{3}\left(\Bx^{\prime}\right).
    \end{split}\label{F123}
\end{equation}
Next, we set
\begin{equation}
    F_{3 j}\left(\Bx^{\prime}\right)=-k^{2} \CR\left(v_{j}\right)\left(\Bx^{\prime}\right), \label{F3j}
\end{equation}
and consider the following integral
\begin{equation}
    \begin{split}
        &\int_{S_{h}}\left(\Delta_{\Bx^{\prime}} \CR(v)\left(\Bx^{\prime}\right) - \Delta_{\Bx^{\prime}} \CR(w)\left(\Bx^{\prime}\right)\right)u_{0}\left(s \Bx^{\prime}\right)\mathrm{d} \Bx^{\prime} \\
        = &\int_{S_{h}}\left(F_{1}\left(\Bx^{\prime}\right)+F_{2}\left(\Bx^{\prime}\right)+F_{3 j}\left(\Bx^{\prime}\right)\right) u_{0}\left(s \Bx^{\prime}\right) \mathrm{d} \Bx^{\prime} + \int_{S_{h}}\left(F_{3}\left(\Bx^{\prime}\right)-F_{3 j}\left(\Bx^{\prime}\right)\right)\mathrm{d} \Bx^{\prime}\\
        :=& I_{1} + \Delta_{j}(s).
    \end{split}\label{3Djs}
\end{equation}
Using the fact that
\begin{equation}
\begin{split}
    &\int_{S_{h}}\left(\Delta_{\Bx^{\prime}} \CR(v)\left(\Bx^{\prime}\right) - \Delta_{\Bx^{\prime}} \CR(w)\left(\Bx^{\prime}\right)\right)u_{0}\left(s \Bx^{\prime}\right)\mathrm{d} \Bx^{\prime}\\
    =& \lim _{\varepsilon \rightarrow 0}\int_{D_{\varepsilon}} \left(\Delta_{\Bx^{\prime}} \CR(v)\left(\Bx^{\prime}\right) - \Delta_{\Bx^{\prime}} \CR(w)\left(\Bx^{\prime}\right)\right)u_{0}\left(s \Bx^{\prime}\right)\mathrm{d} \Bx^{\prime},
\end{split}
\end{equation}
where $D_{\varepsilon}=S_{h} \backslash B_{\varepsilon} \text { for } 0<\varepsilon<h$, and by Lemma \ref{Lemma 3.5}, we can deduce that
\begin{equation}
    I_{1}+\Delta_{j}(s)=I_{3}-I_{2}^{\pm}-\xi_{j}^{\pm}(s), \label{3Z}
\end{equation}
where $\xi_{j}^{\pm}(s)$ is defined in Lemma \ref{lemma3.7}, $I_{2}^{\pm}$ is defined in Lemma \ref{Lemma 3.6}, and
\begin{equation}
    I_{3}=\int_{\Lambda_{h}}\left(u_{0}\left(s \Bx^{\prime}\right) \partial_{\nu} \CR(v-w)-\CR(v-w) \partial_{\nu} u_{0}\left(s \Bx^{\prime}\right)\right) \mathrm{d} \sigma,\label{e3I3}
\end{equation}
with $\Lambda_{h}=S_{h}\cap \partial B_{h}$.
Since $v-w\in H^1(S_h \times (-L,L))$ and $qw \in C^\alpha (\bar{S}_h \times [-L,L]), \alpha \in (0,1)$, from Lemma \ref{Lemma 3.1} we know that $F_{1}\left(\Bx^{\prime}\right) \in C^{\alpha}(\bar{S}_h)$ and $F_{2}\left(\Bx^{\prime}\right) \in C^\alpha (\bar{S}_h)$. In addition, we have $\CR(v_{j})(\Bx^{\prime}) \in C^\alpha (\bar{S}_h)$. Therefore, by Lemma \ref{lemma2.4}, we have
\begin{equation}
    \begin{split}
        I_{1}=&\left(F_{1}(\mathbf{0})+F_{2}(\mathbf{0})+F_{3 j}(\mathbf{0})\right) \int_{S_{h}} u_{0}\left(s \Bx^{\prime}\right) \mathrm{d} \Bx^{\prime}+\int_{S_{h}} \delta F_{1}\left(\Bx^{\prime}\right) u_{0}\left(s \Bx^{\prime}\right) \mathrm{d} \Bx^{\prime} \\
        &+\int_{S_{h}} \delta F_{2}\left(\Bx^{\prime}\right) u_{0}\left(s \Bx^{\prime}\right) \mathrm{d} \Bx^{\prime}+\int_{S_{h}} \delta F_{3 j}\left(\Bx^{\prime}\right) u_{0}\left(s \Bx^{\prime}\right) \mathrm{d} \Bx^{\prime}.
        \end{split}\label{3I1}
\end{equation}
where $\delta F_{1}(\Bx')$, $\delta F_{2}(\Bx')$ and $\delta F_{3j}(\Bx')$ are deduced by Lemma \ref{lemma2.4} with $F_{1}(\Bx')$, $F_{2}(\Bx')$ and $F_{3j}(\Bx')$ defined in (\ref{F123}) and (\ref{F3j}).
By Lemmas \ref{Lemma 3.2} and \ref{Lemma 2.1}, we can deduce that
\begin{equation}
    \begin{split}
        \left|\int_{S_{h}} \delta F_{1}\left(\Bx^{\prime}\right) u_{0}\left(s \Bx^{\prime}\right) \mathrm{d} \Bx^{\prime}\right| \leq & \frac{2\left\|F_{1}\right\|_{C^{\alpha}}\left(\theta_{M}-\theta_{m}\right) \Gamma(2 \alpha+4)}{\delta_{W}^{2 \alpha+4}} s^{-\alpha-2}, \\
        \left|\int_{S_{h}} \delta F_{2}\left(\Bx^{\prime}\right) u_{0}\left(s \Bx^{\prime}\right) \mathrm{d} \Bx^{\prime}\right| \leq & \frac{2\left\|F_{2}\right\|_{C^{\alpha}}\left(\theta_{M}-\theta_{m}\right) \Gamma(2 \alpha+4)}{\delta_{W}^{2 \alpha+4}} s^{-\alpha-2},\\
        \left|\int_{S_{h}} \delta F_{3 j}\left(\Bx^{\prime}\right) u_{0}\left(s \Bx^{\prime}\right) \mathrm{d} \Bx^{\prime}\right| \leq & \frac{8 L \sqrt{\pi}\|\psi\|_{C^{\infty}}\left(\theta_{M}-\theta_{m}\right) \Gamma(2 \alpha+4)}{\delta_{W}^{2 \alpha+4}} k^{2} \operatorname{diam}\left(S_{h}\right)^{1-\alpha} \\
        & \times (1+k)\left\|g_{j}\right\|_{L^{2}\left(\mathbb{S}^{n-1}\right)} s^{-\alpha-2}.
    \end{split}\label{I1N}
\end{equation}

For $\Delta_{j}(s)$, using Cauthy-Schwarz inequality, Corollary \ref{cor2.2} and the assumption (\ref{3.3}), we can drive that
\begin{equation}
    \begin{split}
        \left|\Delta_{j}(s)\right| & \leq k^{2}\left\|\CR(v)-\CR\left(v_{j}\right)\right\|_{L^{2}\left(S_{h}\right)}\left\|u_{0}\left(s \Bx^{\prime}\right)\right\|_{L^{2}\left(S_{h}\right)} \\
        & \leq \frac{k^{2}\|\psi\|_{\infty} \sqrt{C(L, h)\left(\theta_{M}-\theta_{m}\right)} e^{-\sqrt{s \Theta} \delta_{W}} h}{\sqrt{2}} j^{-\Upsilon},
    \end{split}\label{3DS}
\end{equation}
where $C(L, h)$ is a positive constant depending on $L$ and $h$ and $\Theta \in[0, h]$.

By Lemma 3.1, and the same arguments in (\ref{I3}), we have
\begin{equation}
    \left|I_{3}\right| \leq C e^{-c^{\prime} \sqrt{s}},\label{3I3}
\end{equation}
where $c'>0$ as $s \to \infty$.

By Lemma \ref{Lemma 3.6} and (\ref{3I1}), multiplying $s$ on the both sides of (\ref{3Z}), we can deduce that
\begin{eqnarray*}
        &&2 v_{j}(\mathbf{0}) \eta(\mathbf{0})\left[\left(\mu\left(\theta_{M}\right)^{-2}-\mu\left(\theta_{M}\right)^{-2} e^{-\sqrt{s h} \mu\left(\theta_{M}\right)}-\mu\left(\theta_{M}\right)^{-1} \sqrt{s h} e^{-\sqrt{s h} \mu\left(\theta_{M}\right)}\right) C_{2}^{+}\right. \\
        &&\left.+\left(\mu\left(\theta_{m}\right)^{-2}-\mu\left(\theta_{m}\right)^{-2} e^{-\sqrt{s h} \mu\left(\theta_{m}\right)}-\mu\left(\theta_{m}\right)^{-1} \sqrt{s h} e^{-\sqrt{s h} \mu\left(\theta_{m}\right)}\right) C_{2}^{-}\right] \\
        &=&s\left[I_{3}-\left(F_{1}(\mathbf{0})+F_{2}(\mathbf{0})+F_{3 j}(\mathbf{0})\right) \int_{S_{h}} u_{0}\left(s \Bx^{\prime}\right) \mathrm{d} \Bx^{\prime}-\Delta_{j}(s)\right. \\
        &&-\eta(\mathbf{0})\left(I_{22}^{+}+I_{22}^{-}\right)-I_{\eta}^{+}-I_{\eta}^{-}-\int_{S_{h}} \delta F_{1}\left(\Bx^{\prime}\right) u_{0}\left(s \Bx^{\prime}\right) \mathrm{d} \Bx^{\prime}-\int_{S_{h}} \delta F_{2}\left(\Bx^{\prime}\right) u_{0}\left(s \Bx^{\prime}\right) \mathrm{d} \Bx^{\prime} \\
        &&\left.-\int_{S_{h}} \delta F_{3 j}\left(\Bx^{\prime}\right) u_{0}\left(s \Bx^{\prime}\right) \mathrm{d} \Bx^{\prime}-v_{j}(\mathbf{0}) \eta(\mathbf{0})\left(I_{21}^{-}+I_{21}^{+}\right)-\xi_{j}^{\pm}(s)\right].
 \end{eqnarray*}
Taking $s=j^\beta,\text{ with }\max \left\{0, \frac{\varrho}{1+\alpha}\right\}<\beta<\Upsilon$ and letting $j\to \infty$, together with the use of (\ref{U0}), (\ref{I1N}),(\ref{3DS}), (\ref{3I3}), Lemmas \ref{Lemma 3.6} and \ref{lemma3.7}, we can deduce that
\begin{equation}
    \lim _{j \rightarrow \infty} \eta(\mathbf{0})\left(C_{2}^{-} \mu\left(\theta_{m}\right)^{-2}+C_{2}^{+} \mu\left(\theta_{M}\right)^{-2}\right) v_{j}(\mathbf{0})=0.\label{lim}
\end{equation}
Moreover, by straightforward calculations, we know that
\begin{equation*}
    \begin{split}
        &C_{2}^{-} \mu\left(\theta_{m}\right)^{-2}+C_{2}^{+} \mu\left(\theta_{M}\right)^{-2} \\
        =&\frac{\left(C_{2}^{+} \cos \theta_{m}+C_{2}^{-} \cos \theta_{M}\right)+\mathbf{i}\left(C_{2}^{+} \sin \theta_{m}+C_{2}^{-} \sin \theta_{M}\right)}{\left(\cos \theta_{m}+\mathbf{i} \sin \theta_{m}\right)\left(\cos \theta_{M}+\mathbf{i} \sin \theta_{M}\right)}
        \end{split}
\end{equation*}
Because of the assumption (\ref{3.5}), one can directly verify that
\begin{equation*}
   \big( \cos \theta_{m}+\cos \theta_{M}\big ) \text { and } \big(\sin \theta_{m}+\sin \theta_{M}\big)
\end{equation*}
can not be zero simultaneously. Without loss of generality, we assume that  $\sin \theta_{m}+\sin \theta_{M} \neq 0$. Then we split our arguments into the following two cases:
\begin{enumerate}
    \item $\sin \theta_{m}+\sin \theta_{M}>0$;\smallskip

    \item $\sin \theta_{m}+\sin \theta_{M}<0$.
\end{enumerate}

For the first case, if $\sin \theta_m$ and $\sin \theta_M$ have the same sign, then from (\ref{C2-}), we know that $C_{2}^{+} \sin \theta_{m}+C_{2}^{-} \sin \theta_{M}\neq 0$ which means that
\begin{equation}
    C_{2}^{-} \mu\left(\theta_{m}\right)^{-2}+C_{2}^{+} \mu\left(\theta_{M}\right)^{-2} \neq 0. \label{3.43}
\end{equation}
If $\sin \theta_m$ and $\sin \theta_M$ have different signs, under the assumption (\ref{3.5}) we know that $\sin \theta_m<0$ and $\sin \theta_M>0$. From (\ref{C2-}), we have
\begin{equation*}
    \begin{split}
        \frac{C(\psi)}{1-(k L)^{2}}\left(\sin \theta_{m}+\left(1-2(k L)^{2}\right) \sin \theta_{M}\right) & \leq C_{2}^{+} \sin \theta_{m}+C_{2}^{-} \sin \theta_{M} \\
        & \leq \frac{C(\psi)}{1-(k L)^{2}}\left(\left(1-2(k L)^{2}\right) \sin \theta_{m}+\sin \theta_{M}\right).
        \end{split}
\end{equation*}
For a given $0<\varepsilon<1$, we can choose an appropriate $L$ such that $0<k L<\sqrt{\varepsilon / 2},$ from which we can derive the bounds as follows
\begin{equation}
    \begin{split}
        \frac{C(\psi)}{1-(k L)^{2}}\left(\sin \theta_{m}+(1-\varepsilon) \sin \theta_{M}\right) & \leq C_{2}^{+} \sin \theta_{m}+C_{2}^{-} \sin \theta_{M} \\
        & \leq \frac{C(\psi)}{1-(k L)^{2}}\left((1-\varepsilon) \sin \theta_{m}+\sin \theta_{M}\right).
        \end{split}\label{3.44}
\end{equation}
Since $\sin \theta_{m}+\sin \theta_{M}>0,$ we can establish the lower bound in (\ref{3.44}) as follows. Denote $\varepsilon_{0}=\min \left\{\frac{\sin \theta_{m}+\sin \theta_{M}}{2 \sin \theta_{M}}, 1\right\}$ and choose $\varepsilon \in\left(0, \varepsilon_{0}\right) .$ It can be verified that
\begin{equation*}
    C_{2}^{+} \sin \theta_{m}+C_{2}^{-} \sin \theta_{M} \geq \frac{C(\psi)}{1-(k L)^{2}}\left(\sin \theta_{m}+(1-\varepsilon) \sin \theta_{M}\right)>0
\end{equation*}
which indicates that (\ref{3.43}) holds as well.

For the second case, if $\sin \theta_m=0$ or $\sin \theta_M=0$ is satisfied, from (\ref{3.44}) we know that
\begin{equation}
    C_{2}^{+} \sin \theta_{m}+C_{2}^{-} \sin \theta_{M}<0\label{3.45}.
\end{equation}
Otherwise, if $\left|\sin \theta_{m}\right| \leq\left|\sin \theta_{M}\right|,$ from the fact that $(1-\varepsilon)\left|\sin \theta_{m}\right| \leq\left|\sin \theta_{M}\right|,$ we know that (\ref{3.45}) still holds from the upper bound of (\ref{3.44}). If $\left|\sin \theta_{m}\right|>\left|\sin \theta_{M}\right|$ we can choose $\varepsilon$ with $\varepsilon>1-\left|\sin \theta_{M}\right| /\left|\sin \theta_{m}\right|>0$ such that (\ref{3.45}) is also fulfilled from the upper bound of (\ref{3.44}). Therefore, for the second case, we know that (\ref{3.43}) is always fulfilled.
Therefore, by (\ref{lim}) and (\ref{3.4}) we know that
\begin{equation*}
    \lim _{j \rightarrow \infty} v_{j}(\mathbf{0})=0.
\end{equation*}
Next, in order to simplify the notations, we define
$$
\kappa:=B((\Bx_{c}, x_n^c ),\rho)\cap (W\times (-M,M)).
$$
Then by using the fact that
\begin{equation*}
    \begin{split}
        &\lim _{\rho \rightarrow+0} \frac{1}{m(\kappa)} \int_{\kappa}|v(\Bx)| \mathrm{d} \Bx\\
         \leq & \lim _{j \rightarrow \infty}\left(\lim _{\rho \rightarrow+0} \frac{1}{m\left(\kappa\right)} \int_{\kappa}\left|v(\Bx)-v_{j}(\Bx)\right| \mathrm{d} \Bx+\lim _{\rho \rightarrow+0} \frac{1}{m\left(\kappa\right)} \int_{\kappa}\left|v_{j}(\Bx)\right| \mathrm{d} \Bx\right),
        \end{split}
\end{equation*}
we finally have
\begin{equation*}
    \lim _{\rho \rightarrow+0} \frac{1}{m\left(\kappa\right)} \int_{\kappa}|v(\Bx)| \mathrm{d} \Bx=0.
\end{equation*}

The proof is complete.

\end{proof}

Similar to Corollary \ref{maincor1}, we consider the vanishing property of the transmission eigenfunctions in the case $\eta\equiv 0$ in three dimensions.

\begin{cor}\label{maincor2}
    Let $v, w \in H^{1}(W \times(-M, M))$ be a pair of eigenfunctions to \eqref{3DE} associated with $\eta \equiv 0, k\in \RR_{+}$ and $W \subset \mathbb{R}^{2}$ being defined in (\ref{2.1}), and $M>0.$ For any fixed $x_3^{c} \in(-M, M)$ and $L>0$ defined in Definition \ref{Definition 3.1}, we suppose that $L$ is sufficiently small such that $\left(x_3^{c}-L, x_3^{c}+L\right) \subset(-M, M)$. Moreover, there exists a sufficiently small neighbourhood $S_{h}$ of $\Bx_{c} \in \mathbb{R}^{2}$ such that $q w \in C^{\alpha}\left(\bar{S}_{h} \times[-M, M]\right)$ for $0<\alpha<1$. If the following conditions are fulfilled:
\begin{enumerate}
\item[(a)] the transmission eigenfunction $v$ can be approximated in $H^{1}\left(S_{h} \times(-M, M)\right) b y$
the Herglotz functions $v_{j}, j=1,2, \ldots,$ with kernels $g_{j}$ satisfying
\begin{equation}
    \left\|v-v_{j}\right\|_{H^{1}\left(S_{h} \times(-M, M)\right)} \leq j^{-\Upsilon}, \quad\left\|g_{j}\right\|_{L^{2}\left(\mathbb{S}^{2}\right)} \leq C j^{\varrho},\label{3newasa}
\end{equation}
for some constants $C,\varrho \text{ and }\Upsilon$ with $C>0, \Upsilon>0 \text { and } \varrho<\alpha\Upsilon/2$;
\item[(b)] the angles $\theta_{m}$ and $\theta_{M}$ of the sector $W$ satisfy
\begin{equation}
    -\pi<\theta_{m}<\theta_{M}<\pi \text { and } \theta_{M}-\theta_{m} \neq \pi;\label{3newasb}
\end{equation}
\end{enumerate}
then it holds that
\begin{equation*}
    \lim _{\rho \rightarrow+0} \frac{1}{m\left(B\left(\mathbf{x}_{c}, \rho\right)\cap W\right)} \int_{B\left(\mathbf{x}_{c}, \rho\right)\cap W} \mathcal{R}(V w)\left(\mathbf{x}^{\prime}\right) \mathrm{d} \mathbf{x}^{\prime}=0,
\end{equation*}
where $q\left(\mathbf{x}^{\prime}, x_3\right)=1+V\left(\mathbf{x}^{\prime}, x_3\right)$.
\end{cor}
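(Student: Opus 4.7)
The strategy parallels Corollary~\ref{maincor1}, now employing the integral identity and dimension-reduction machinery developed for Theorem~\ref{Theorem 3.1}. Without loss of generality, place $\Bx_c$ at the origin of $\RR^2$ and take $x_3^c=0$. With $\eta\equiv 0$, both $I_2^\pm$ and $\xi_j^\pm(s)$ drop out of \eqref{3Z}, reducing it to $I_1+\Delta_j(s)=I_3$. Expanding $I_1$ via \eqref{3I1} isolates the leading factor $\bigl(F_1(\mathbf{0})+F_2(\mathbf{0})+F_{3j}(\mathbf{0})\bigr)\int_{S_h}u_0(s\Bx')\,\mathrm{d}\Bx'$ together with three $\delta F$ error integrals.

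The key step is to multiply the reduced identity by $s^2$ and let $j\to\infty$ along $s=j^\beta$ with $\max\{\varrho/\alpha,0\}<\beta<\Upsilon/2$, a nonempty range by the hypothesis $\varrho<\alpha\Upsilon/2$. Estimate \eqref{U0} gives $s^2\int_{S_h}u_0\,\mathrm{d}\Bx'\to 6\Bi\bigl(e^{-2\theta_M\Bi}-e^{-2\theta_m\Bi}\bigr)$, a constant nonzero by \eqref{3newasb}. The remainders from \eqref{I1N} decay as $s^{-\alpha}$ for the $F_1,F_2$ pieces and as $j^{\varrho-\alpha\beta}\to 0$ for the $F_{3j}$ piece, while $s^2\Delta_j(s)$ and $s^2 I_3$ vanish exponentially via \eqref{3DS} and \eqref{3I3}. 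The result is the pointwise identity
\begin{equation*}
F_1(\mathbf{0})+F_2(\mathbf{0})+\lim_{j\to\infty}F_{3j}(\mathbf{0})=0.
\end{equation*}

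Three facts then complete the analysis. First, $F_1(\mathbf{0})=\int_{-L}^{L}\psi''(x_3)(v-w)(\mathbf{0},x_3)\,\mathrm{d}x_3=0$, because the transmission condition $v=w$ on $\Gamma^\pm\times(-M,M)$ forces the traces of $v$ and $w$ to coincide along the edge $\{\mathbf{0}\}\times(-M,M)$. Second, $F_2(\mathbf{0})=k^2\CR(qw)(\mathbf{0})$ is a genuine pointwise value since $\CR(qw)\in C^\alpha(\bar S_h)$ by Lemma~\ref{Lemma 3.1}. Third, the interchange of $\rho$- and $j$-limits used in the final paragraph of Theorem~\ref{Theorem 2.1}, together with the smoothness of each Herglotz wave $v_j$ and the bound $\|v-v_j\|_{H^1}\leq j^{-\Upsilon}$, identifies $\lim_{j\to\infty}\CR(v_j)(\mathbf{0})$ with the limiting corner average of $\CR(v)$.

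Combining these facts yields equality of the limiting corner averages of $\CR(qw)$ and $\CR(v)$ at $\Bx_c$. Finally, since $v=w$ on $\Gamma^\pm\times(-M,M)$, the function $\CR(v)-\CR(w)\in H^1(S_h)$ has vanishing trace on $\partial W\cap\partial S_h$ near $\Bx_c$, so a Hardy/trace argument analogous to the 2D step in Corollary~\ref{maincor1} shows that $\CR(v)$ and $\CR(w)$ share the same limiting corner average. Subtracting then delivers the claim $\lim_{\rho\to 0}\frac{1}{m(B(\Bx_c,\rho)\cap W)}\int_{B(\Bx_c,\rho)\cap W}\CR(Vw)(\Bx')\,\mathrm{d}\Bx'=0$. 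The main technical hurdle, already present in Corollary~\ref{maincor1}, is the rigorous interchange of the $\rho$- and $j$-limits at the corner together with the matching of the $\CR(v)$- and $\CR(w)$-averages; beyond that, the argument is the Theorem~\ref{Theorem 3.1} computation carried out while retaining, rather than discarding, the leading constant in \eqref{U0}.
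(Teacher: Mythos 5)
Your proposal follows essentially the same route as the paper's proof: with $\eta\equiv 0$ the identity \eqref{3Z} reduces to \eqref{newmain2}, one multiplies by $s^{2}$, takes $s=j^{\beta}$ with $\max\{\varrho/\alpha,0\}<\beta<\Upsilon/2$, uses $F_{1}(\mathbf{0})=0$ and the estimates \eqref{U0}, \eqref{I1N}, \eqref{3DS}, \eqref{3I3} to get $\lim_{j\to\infty}\mathcal{R}(v_{j})(\mathbf{0})=\mathcal{R}(qw)(\mathbf{0})$, and then matches the corner averages of $\mathcal{R}(v)$ and $\mathcal{R}(w)$ via the coincidence of their traces on $\Gamma$, exactly as in the paper. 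The only cosmetic imprecision is calling the decay of $s^{2}\Delta_{j}(s)$ ``exponential''---it is in fact the polynomial bound $j^{2\beta-\Upsilon}\to 0$ guaranteed by $\beta<\Upsilon/2$---but since you impose that constraint the argument is sound.
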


\begin{proof}
    Without loss of generality, we assume that $\Bx_c=0$. Since $\eta \equiv 0$, from (\ref{3Z}), (\ref{3I1}), we can obtain that
    \begin{equation}
        \begin{split}
            &\left(F_{1}(\mathbf{0})+F_{2}(\mathbf{0})+F_{3 j}(\mathbf{0})\right) \int_{S_{h}} u_{0}\left(s \Bx^{\prime}\right) \mathrm{d} \Bx^{\prime}+\Delta_{j}(s)\\
           &= I_{3}-\int_{S_{h}} \delta F_{1}\left(\Bx^{\prime}\right) u_{0}\left(s \Bx^{\prime}\right) \mathrm{d} \Bx^{\prime}
        -\int_{S_{h}} \delta F_{2}\left(\Bx^{\prime}\right) u_{0}\left(s \Bx^{\prime}\right) \mathrm{d} \Bx^{\prime}-\int_{S_{h}} \delta F_{3 j}\left(\Bx^{\prime}\right) u_{0}\left(s \Bx^{\prime}\right) \mathrm{d} \Bx^{\prime},
        \end{split}\label{newmain2}
    \end{equation}
    where $\Delta_{j}(s)$ and $I_{3}$ are defined in (\ref{3Djs}) and (\ref{e3I3}), and $\delta F_{1}(\Bx')$, $\delta F_{2}(\Bx')$ and $\delta F_{3j}(\Bx')$ are deduced by Lemma \ref{lemma2.4} with $F_{1}(\Bx')$, $F_{2}(\Bx')$ and $F_{3j}(\Bx')$ defined in (\ref{F123}) and (\ref{F3j}).
    Since $v=w \text { on } \Gamma^{\pm} \times(-M, M)$, it easy to see that
    \begin{equation*}
        F_{1}(\mathbf{0})=\int_{-L}^{L} \psi^{\prime \prime}\left(x_3\right)\left(v\left(\mathbf{0}, x_3\right)-w\left(\mathbf{0}, x_3\right)\right) \mathrm{d} x_3=0.\label{F1}
    \end{equation*}
    Multiplying $s^{2}$ on both sides of (\ref{newmain2}), taking $s=j^{\beta}$ with $\max\{\varrho/\alpha,0\}<\beta<\Upsilon/2$, using the assumptions (\ref{3newasa}) and (\ref{3newasb}), and by letting $j\to \infty$, from (\ref{F1}), (\ref{U0}), (\ref{I1N}), (\ref{3DS}) and (\ref{3I3}), we can prove that
    \begin{equation*}
        \lim _{j \rightarrow \infty} F_{3 j}(\mathbf{0})=-F_{2}(\mathbf{0}),
    \end{equation*}
    which in turn implies that
    \begin{equation*}
        \lim _{j \rightarrow \infty} \mathcal{R}\left(v_{j}\right)(\mathbf{0})=\mathcal{R}(q w)(\mathbf{0}).
    \end{equation*}
    Using the boundary condition in (\ref{3DE}) and Definition \ref{Definition 3.1}, we have that $\mathcal{R}(w)\left(\mathbf{x}^{\prime}\right)=\mathcal{R}(v)\left(\mathbf{x}^{\prime}\right) \text { on } \Gamma$. Hence, we have
    \begin{equation*}
    \begin{split}
     &   \lim _{\rho \rightarrow+0} \frac{1}{m(B(\mathbf{0}, \rho)\cap W)} \int_{B(\mathbf{0}, \rho)\cap W} \mathcal{R}(v)\left(\mathbf{x}^{\prime}\right) \mathrm{d} \mathbf{x}^{\prime}\\
        =& \lim _{\rho \rightarrow+0} \frac{1}{m(B(\mathbf{0}, \rho)\cap W)} \int_{B(\mathbf{0}, \rho)\cap W} \mathcal{R}(w)\left(\mathbf{x}^{\prime}\right) \mathrm{d} \mathbf{x}^{\prime}.
        \end{split}
    \end{equation*}
    which together with the facts that
    \begin{equation*}
        \begin{split}
                \lim _{j \rightarrow \infty} \mathcal{R}\left(v_{j}\right)(\mathbf{0}) &=\lim _{j \rightarrow \infty} \lim _{\rho \rightarrow+0} \frac{1}{m(B(\mathbf{0}, \rho)\cap W)} \int_{B(\mathbf{0}, \rho)\cap W} \mathcal{R}\left(v_{j}\right)\left(\mathbf{x}^{\prime}\right) \mathrm{d} \mathbf{x}^{\prime} \\
                &=\lim _{\rho \rightarrow+0} \frac{1}{m(B(\mathbf{0}, \rho)\cap W)} \int_{B(\mathbf{0}, \rho)\cap W} \mathcal{R}(v)\left(\mathbf{x}^{\prime}\right) \mathrm{d} \mathbf{x}^{\prime}, \\
                \mathcal{R}(q w)(\mathbf{0}) &=\lim _{\rho \rightarrow+0} \frac{1}{m(B(\mathbf{0}, \rho)\cap W)} \int_{B(\mathbf{0}, \rho)\cap W} \mathcal{R}(q w)\left(\mathbf{x}^{\prime}\right) \mathrm{d} \mathbf{x}^{\prime}.
        \end{split}
    \end{equation*}
  readily completes the proof of the corollary.
\end{proof}

\begin{remark}
    If $V(\Bx',x_n)$ is continuous near the edge point $(\Bx_{c},x_3^{c})$ and $V(\Bx_{c},x_3^{c})\neq 0$, by the dominant convergent theorem and Definition \ref{Definition 3.1}, we can prove that
    \begin{equation*}
        \lim _{\rho \rightarrow+0} \frac{1}{m\left(B\left(\mathbf{x}_{c}, \rho\right)\cap W\right)} \int_{B\left(\mathbf{x}_{c}, \rho\right)\cap W} \int_{x_3^{c}-L}^{x_3^{c}+L} \psi\left(x_3\right) w\left(\mathbf{x}^{\prime}, x_3\right) \mathrm{d} \mathbf{x}^{\prime} \mathrm{d} x_3=0.
    \end{equation*}
    Furthermore, if $\psi(x_3^{c})\neq 0$, we can show that
    \begin{equation*}
        \lim _{\rho \rightarrow+0} \frac{1}{m\left(B\left(\mathbf{x}_{c}, \rho\right)\cap W\right)} \int_{B\left(\mathbf{x}_{c}, \rho\right)\cap W} \int_{x_3^{c}-L}^{x_3^{c}+L} w\left(\mathbf{x}^{\prime}, x_3\right) \mathrm{d} \mathbf{x}^{\prime} \mathrm{d} x_3=0.
    \end{equation*}
    which describes the vanishing property of the transmission eigenfunctions near the edge corner in three dimensions.
\end{remark}

\section*{Acknowledgement}
The work of Y. Deng was supported by NSF grant of China No. 11971487 and NSF grant of Hunan No. 2020JJ2038.
 The work of H Liu was supported by a startup fund from City University of Hong Kong and the Hong Kong RGC General Research Fund (projects 12301420, 12302919, 12301218).

\renewcommand\refname{References}

\end{document}